\theoremstyle{plain}
\newtheorem{main}{Theorem}
\newtheorem{theorem}{Theorem}[section]
\newtheorem{lemma}[theorem]{Lemma}
\newtheorem{proposition}[theorem]{Proposition}
\newtheorem{corollary}[theorem]{Corollary}
\theoremstyle{remark}
\newtheorem{remark}[theorem]{Remark}
\newtheorem{definition}[theorem]{Definition}
\newtheorem{conjecture}[theorem]{Conjecture}
\numberwithin{equation}{section}
\newcommand{\C}{\operatorname{C}}
\newcommand{\G}{\operatorname{G}}
\newcommand{\Gibb}{\operatorname{Gibb}}
           \def\ea{\end{array}}
          \def\ec{\end{center}}
     \def\ed{\end{description}}
        \def\ee{\end{equation}}
       \def\eea{\end{eqnarray}}
     \def\eeaa{\end{eqnarray*}}
 \def\et{\end{thebibliography}}
\def\bib{\bibitem}
\def\Diff{{\rm Diff}}
\def\Cl{{\rm Cl}}
\def\cG{{\mathcal G}}
\def\cC{{\mathcal C}}
\def\cU{{\mathcal U}}
\def\cB{{\mathcal B}}
\def\cD{{\mathcal D}}
\def\cF{{\mathcal F}}
\def\cM{{\mathcal M}}
\def\length{\operatorname{length}}
\def\vep{\varepsilon}
\def\TT{{\mathbb T}}
\def\RR{{\mathbb R}}
\def\ZZ{{\mathbb Z}}
\def\NN{{\mathbb N}}
\def\Diff{{\rm Diff}}
\def\Cl{{\rm Cl}}
\def\Gibb{{\rm Gibb}}
\def\inv{{\rm inv}}
\def\erg{{\rm erg}}
\def\Int{{\rm Int}}
\def\tak{{\tilde a_k}}
\title[DA diffeomorphisms]{Robust minimality of strong foliations for DA diffeomorphisms:  {$cu$}-volume expansion and new examples}
\author{Jana Rodriguez Hertz, Ra\'{u}l Ures and Jiagang Yang}
\date{\today}
\subjclass[2010]{Primary: 37D30; Secondary: 37B20}
\keywords{DA, partially hyperbolic, minimal foliation, mostly expanding, entropy}
\thanks{J.R.H. and R.U. are partially supported by NNSFC 11871262. J.R.H. is partially supported by NNSFC 11871394. J.Y. is partially supported by CNPq, FAPERJ, and PRONEX of Brazil and 
NNSFC 11871487 of China. Most of the research for this paper was made during a visit by J. Y. to SUSTech's Mathematics Department. J.Y. is very grateful for the good working environment during his visit and for the support received from the Department colleagues and authorities, in particular from J.R.H. and R.U. }
\address{1. Department of Mathematics, Southern University of Science and Technology, Shenzhen,
Guangdong, China}
\address{2. SUSTech International Center for Mathematics, Shenzhen,
Guangdong, China}
\email{rhertz@sustc.edu.cn}
\address{1. Department of Mathematics, Southern University of Science and Technology, Shenzhen,
Guangdong, China}
\address{2. SUSTech International Center for Mathematics, Shenzhen,
Guangdong, China}
\email{ures@sustc.edu.cn}
\address{Departamento de Geometria, Instituto de Matem\'atica e Estat\'istica, Universidade
Federal Fluminense, Niter\'oi, Brazil}
\email{yangjg@impa.br}
\begin{document}

\begin{abstract}
Let $f$ be a $C^2$ partially hyperbolic diffeomorphisms of $\TT^3$ (not necessarily volume preserving or transitive) isotopic to
a linear Anosov diffeomorphism $A$ with eigenvalues
$$\lambda_{s}<1<\lambda_{c}<\lambda_{u}.$$
Under the assumption that the set
$$\{x: \,\mid\log \det(Tf\mid_{E^{cu}(x)})\mid \leq \log \lambda_{u} \}$$
has zero volume inside any unstable leaf of $f$ where $E^{cu} = E^c\oplus E^u$ is the center unstable bundle, we prove that
the stable foliation of $f$ is $C^1$ robustly minimal, i.e., the stable foliation of
any diffeomorphism $C^1$ sufficiently close to $f$ is minimal. In particular, $f$  is robustly transitive.\par

We build, with this criterion, a new example of a $C^1$ open set of partially hyperbolic diffeomorphisms,
for which the strong stable foliation and the strong unstable foliation  are both minimal. 
\end{abstract}

\maketitle

\setcounter{tocdepth}{1}
\tableofcontents

\section{Introduction}

In this paper we will study the dynamics of certain types of partially hyperbolic diffeomorphisms as well as the properties of their invariant foliations. One of the most classically studied properties is transitivity.  
A diffeomorphism is  \emph{transitive} if it admits a dense orbit.
Transitivity is said to be {\em $C^r$ robust} (or stable) if it holds for every diffeomorphism $g$ in a $C^r$ neighborhood of $f$.

The first known examples of robustly transitive diffeomorphisms were the transitive Anosov diffeomorphisms.  This is a consequence of  their structural stability. It was not until the late 60's that nonhyperbolic robustly transitive  examples appeared. First it was Shub~\cite{Sh} who gave examples on $\mathbb T^4$. A few years later  Ma\~n\'e~\cite{M} presented a new class of examples on $\mathbb T^3$. These examples are called {\em DA diffeomorphisms} and are defined below in more detail. Ma\~n\'e's examples are strongly related to the results of this paper.  New advances in the study of robustly transitive diffeomorphisms occurred only in the 1990s.

Bonatti and D\'iaz \cite{BD} developed a new tool,  called blender, which made it possible to produce numerous new examples. For example,  they showed that some perturbations of certain products of Anosov diffeomorphisms (defined below), and  certain perturbations of the time one map of transitive Anosov flows are robustly transitive. 
All of these examples, including those from~\cite{Sh} and~\cite{M}, have a common property: they are partially hyperbolic. 
 A diffeomorphism $f$ of a closed manifold $M$ is {\em partially
hyperbolic} if the tangent bundle $TM$ splits into three invariant sub-bundles: 
$TM=E^{s}\oplus E^{c}\oplus E^{u}$
such that  all unit vectors
$v^\sigma\in E^\sigma(x)$ ($\sigma= s, c, u$) with $x\in M$ satisfy  :
\begin{equation}\label{pointwise.ph}
 |T_xfv^s| < |T_xfv^c| < |T_xfv^u| 
\end{equation}
for some suitable Riemannian metric. Here $T_xf$ is the tangent map of $f$ at the point $x$. The {\em stable bundle} $E^{s}$ must also satisfy
$\|Tf|_{E^s}\| < 1$ and the {\em unstable bundle}, $\|Tf^{-1}|_{E^u}\| < 1$. The bundle $E^{c}$ is called the {\em center bundle}.
Both bundles $E^{u}$ and $E^{s}$ are non-trivial. 
For further use, let us denote $E^{cu}=E^{c}\oplus E^{u}$ and $E^{cs}=E^{s}\oplus E^{c}$. When the center bundle $E^{c}$ is zero-dimensional, the diffeomorphism is called {\em Anosov} or {\em hyperbolic}. \newline\par

D\'iaz, Pujals and Ures \cite{DPU} proved that, in three-dimensional manifolds, robust transitivity implies partial hyperbolicity. However, the unstable bundle $E^{u}$ or the stable bundle $E^{s}$ could be trivial, though not both at the same time. See also \cite{BDP} for a higher dimensional version. \newline\par

It is well known  \cite{BP, HPS77} that there are two invariant foliations, the {\em stable} and {\em unstable} foliations, which are tangent, respectively, to $E^{s}$ and $E^{u}$. We refer to these foliations as the {\em strong foliations}. However, $E^{c}$ is not always integrable, not even in the case where it is one-dimensional. There are examples, even in 3-dimensional manifolds, of partially hyperbolic diffeomorphisms where the center bundle is not tangent to an invariant foliation, see \cite{HHUcoh, BGHP}. If there are invariant foliations $\cF^{i}$ with $i=sc, cu, c$ tangent, respectively, to the bundles $E^{i}$, then the diffeomorphism is called {\em dynamically coherent}. A non-dynamically coherent partially hyperbolic diffeomorphism is sometimes called {\em incoherent}. \newline\par

If the strong stable -or the strong unstable- foliation of a partially hyperbolic diffeomorphism is {\em minimal}, then
the diffeomorphism is transitive. Recall that a foliation is minimal if every leaf is dense in the whole manifold. The dynamical properties of these foliations are of great importance since they are intimately related to the dynamical properties of the diffeomorphism and of some of its most relevant invariant measures, such as physical measures, $u$-Gibbs measures, entropy maximizing measures, etc. Discussing the exact nature of this relationship is beyond the scope of this paper, but the interested reader may consult, for instance, \cite{D2, BFT}. \newline\par

There are  few results concerning  the robustness of the minimality of the strong stable foliations.   Bonatti,
D\'iaz and Ures~\cite{BDU} showed that either the strong stable or the strong unstable foliation is robustly minimal for three-dimensional robustly transitive diffeomorphisms. In \cite{HHUsome} there is also a version for partially hyperbolic diffeomorphisms in higher dimensions, with one-dimensional center bundle. 
Pujals and Sambarino~\cite{PS} proved that if each unstable leaf of $f$ contains a point whose $\omega$-limit set is uniformly hyperbolic, and if the diffeomorphism itself admits a minimal strong stable foliation, then  diffeomorphisms
in a $C^1$ neighborhood of $f$ also have minimal strong stable foliations.\newline\par

 In \cite{BDU} the robust minimality of both the strong stable and the strong unstable foliation is obtained by adding the following hypotheses: $f$ is dynamically coherent, all bundles $E^{s}$, $E^{c}$ and $E^{u}$ are orientable and $Tf(x)$ preserves their orientation, and there is a periodic compact center leaf. \newline\par

Up to now, all known partially hyperbolic diffeomorphisms for which both strong foliations are robustly minimal  fall into two categories: either they have a compact and periodic central curve as mentioned above, or they are dynamically incoherent.  Examples of the latter type were obtained in \cite{BGHP}. \newline \par

When the derivative $Tf(x)$ is expanding on the center bundle $E^{c}$ defined above, the diffeomorphism is Anosov. Even in this paradigmatic case, it is not known whether the strong unstable foliation is minimal.  This is an open question even for Anosov diffeomorphisms of the $3$-torus. Is the strong unstable manifold robustly minimal in this case? The numerical studies performed in \cite{GKM} suggest that the strong unstable manifold of the fixed point is dense for perturbations of a certain linear example. Furthermore, in \cite[Theorem 6.1]{HU2019} it is proved that for 3-dimensional Anosov diffeomorphisms, there is always a dense strong unstable leaf, though this leave is not necessarily the strong unstable manifold of the fixed point. A foliation is called {\em transitive} when it contains a dense leaf. 
\newline\par

In this work we will study the robust minimality of these foliations for {\em DA diffeomorphisms}, which we define as follows: let $A$ be a linear Anosov diffeomorphism  over $\TT^3$ with three distinct real eigenvalues. Then any $C^r$ ($r\geq 1$) partially hyperbolic diffeomorphisms in the isotopy class of $A$ will be called a DA diffeomorphism. We denote them by $\cD^r(A)$. In particular, Ma\~n\'e's construction in~\cite{M} leads to  DA diffeomorphisms.
The largest eigenvalue of the linear Anosov diffeomorphism $A$, denoted by $\lambda_{u}$, plays a key role in the study of ergodic measures of DA diffeomorphisms: for any DA diffeomorphism, its ergodic measures with entropy larger than $\log\lambda_{u}$
have the same structure as those of the linear Anosov diffeomorphism. Indeed, the semiconjugacy (for the precise definition, see Section~\ref{subsection.dynamical.coherence}) between the DA and the linear Anosov diffeomorphism $A$ is an isomorphism when restricted to this set of measures. The interested reader may consult \cite{VY2}. In this paper, we will further explain how $\lambda_{u}$ provides topological information about the diffeomorphism. We will also give examples for which both foliations are robustly minimal.

\subsection{Statement of the main result}
We will show that, for DA diffeomorphisms, if the volume along the $cu$-bundle has a non-uniform expansion (with respect to the constant $\log \lambda_{u}$),
then the stable foliation of this diffeomorphism is robustly minimal.

Although this result may seem somewhat unexpected as we do not assume transitivity of the original diffeomorphism, it was already conjectured by the second author in \cite{Ure12} (see the Introduction and Question 6.6 therein) that all DA diffeomorphisms are transitive. 
In some sense, the hypotheses in the previous results on the robust minimality of the  stable foliation is replaced here by the diffeomorphisms being
in an isotopy class of $A$ and having non-uniform volume  expansion on $E^{cu}$. This implies that every such 3-dimensional DA diffeomorphism really
admits some type of hyperbolic structure, which is mainly related to the constant $\log \lambda_{u}$. We are able to prove the previous conjecture in the case the diffeomorphism is sufficiently close  to one having enough  $cu$-expansion along unstable leaves. 
Here is our main result:

\begin{main}\label{m.boundedpotention} Let $A$ be a linear Anosov diffeomorphism of $\TT^3$ with eigenvalues
$$0<\lambda_{s}<1<\lambda_{c}<\lambda_{u}.$$
Let $f\in \cD^2(A)$ and suppose the set $\cB(f)=\{x: | \det(Tf\mid_{E^{cu}(x)}) | \leq \lambda_{u}\}$ has zero leaf volume
inside any strong unstable leaf, then the strong stable foliation of $f$ is robustly minimal.
\end{main}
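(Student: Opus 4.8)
The plan is to establish robust minimality of the strong stable foliation $\cW^s$ by combining two ingredients: a mechanism that forces density of stable leaves coming from the DA/semiconjugacy structure, and a robustness mechanism supplied by the hypothesis on $cu$-volume expansion. First I would recall the classical dichotomy for DA diffeomorphisms $f\in\cD^2(A)$: there is a semiconjugacy $h\colon\TT^3\to\TT^3$ with $h\circ f=A\circ h$, the strong stable foliation of $f$ projects (via $h$) onto the strong stable foliation of $A$, which is minimal, and the fibers $h^{-1}(y)$ are compact connected sets contained in center-stable plaques. The key point is that a stable leaf $\cW^s(x)$ fails to be dense only if its closure is "trapped'' by the structure of the fibers of $h$; more precisely, following the Bonatti--D\'iaz--Ures type arguments (as in \cite{BDU, HHUsome}), a non-dense stable leaf produces a closed $f$-invariant set $K$ saturated by stable leaves which must carry some hyperbolic obstruction. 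The first main step is therefore to show that the hypothesis $\Leb_{\cW^u}(\cB(f))=0$ on every unstable leaf rules out such a $K$: on the complement of $\cB(f)$ the center-unstable volume expands by more than $\lambda_u$, and since the topological entropy considerations tied to $\log\lambda_u$ (via \cite{VY2}) control the possible invariant sets, one deduces that the only closed stable-saturated invariant set is $\TT^3$ itself, i.e.\ $\cW^s(f)$ is minimal.

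The second main step is the robustness. Here I would invoke the Pujals--Sambarino criterion \cite{PS} in the following form: if every strong unstable leaf of $f$ contains a point whose $\omega$-limit set is contained in a uniformly hyperbolic set, and $\cW^s(f)$ is minimal, then $\cW^s(g)$ is minimal for all $g$ $C^1$-close to $f$. So the task reduces to verifying the $\omega$-limit hypothesis from $\Leb_{\cW^u}(\cB(f))=0$. The idea is that for $\Leb_{\cW^u}$-a.e.\ point $z$ in a given unstable leaf, the forward orbit spends asymptotically zero frequency in $\cB(f)$ (this is where one uses that $\cB(f)$ has zero leaf-volume and some recurrence/Borel--Cantelli argument along the unstable foliation, exploiting that $f$ expands unstable volume), hence the Lyapunov-type lower bound $\limsup\frac1n\log|\det Tf^n|_{E^{cu}}|\geq\log\lambda_u$ holds along its orbit; combined with the isotopy class of $A$ this forces $\omega(z)$ to support only measures of $cu$-volume growth $\geq\log\lambda_u$, which by the semiconjugacy arguments of \cite{VY2} and a Pliss-times argument yields a uniformly hyperbolic sublocus in $\omega(z)$. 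This gives the hypothesis of \cite{PS} on every unstable leaf, and robust minimality follows; robust transitivity is then immediate since a minimal strong stable foliation forces transitivity.

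The main obstacle I anticipate is the first step: showing that zero leaf-volume of $\cB(f)$ genuinely excludes a proper closed stable-saturated invariant set. The difficulty is that "zero volume along unstable leaves'' is a measure-theoretic smallness condition, whereas minimality of $\cW^s$ is a purely topological conclusion, so one must convert measure-smallness into topological largeness. The bridge should be: any closed stable-saturated $f$-invariant proper subset $K$ has the property that $K$ intersects some unstable leaf $\cW^u(x)$ in a set of positive leaf-volume (because $K$ contains whole unstable leaves' worth of "shadow'' via the $u$-Gibbs / SRB-type dynamics of DA maps, cf.\ \cite{VY2}), and on such a set one must control the $cu$-Jacobian; if $K\cap\cW^u(x)$ avoids $\cB(f)$ entirely one gets expansion faster than $\lambda_u$ on a nontrivial set, contradicting compactness of $K$ and the topological constraint coming from the semiconjugacy (fibers of $h$ being center-stable, hence $h$ contracting leaf volume cannot be beaten on a large invariant set). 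Making this last contradiction precise — essentially a distortion-plus-recurrence estimate inside $\cW^u(x)$ — is the technical heart, and likely occupies the bulk of the paper's Sections on dynamical coherence and $cu$-volume expansion.
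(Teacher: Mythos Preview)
Your architecture is substantially different from the paper's, and the differences matter.

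\medskip

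\textbf{The paper does not separate ``minimality of $f$'' from ``robustness via Pujals--Sambarino''.} It proves minimality of the stable foliation for every $g$ in a $C^1$ neighborhood $\cU$ of $f$ in one stroke, without ever assuming or first establishing that $f$ itself has a minimal stable foliation. This is essential because $f$ is not assumed transitive. The mechanism is: (i) the hypothesis on $\cB(f)$ forces every Gibbs $u$-state of $f$ to have positive center exponent (mostly expanding center), via the entropy results of \cite{VY2}; (ii) the space $\G(g)$ of measures satisfying a Pesin-type inequality is upper semi-continuous in $g$, so for all $g\in\cU$ every $\nu\in\G(g)$ has $h_\nu(g)>\log\lambda_u$ and $\lambda^c(\nu,g)>a>0$; (iii) for any open $U$ and Lebesgue-typical $x\in U$, Pliss hyperbolic times give iterated $cu$-disks of uniform size $\delta_1$ accumulating on the support of some ergodic $\nu'\in\G(g)$.

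\medskip

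\textbf{The key ingredient you are missing} is the paper's Proposition~\ref{p.sizeofstablemanifold}: for any $C^1$ DA diffeomorphism $g$ and any ergodic measure $\mu$ with $\lambda^c(\mu)>0$ and $h_\mu(g)>\log\lambda_u$, the Pesin unstable manifold $W^u(x)$ coincides with the entire leaf $\cF^{cu}(x)$ for $\mu$-a.e.\ $x$. This is proved by pushing down via the semiconjugacy $\phi$, using that $h_{\phi_*\mu}(A)>\log\lambda_u$ forces positive partial entropy along $\cF^c_A$, hence non-atomic center conditionals, hence a Poincar\'e recurrence argument stretches local unstable pieces to the full center leaf. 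Combined with the global product structure (Proposition~\ref{p.product}), this makes $\cF^{cu}(x)$ an $s$-section: it transversally meets every stable leaf. The grown $cu$-disk from step (iii) then shadows a backward iterate of this $s$-section, and so $U$ meets every stable leaf.

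\medskip

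\textbf{Gaps in your plan.} Your first step---ruling out a proper closed $s$-saturated invariant set $K$ by a volume/distortion contradiction---is the part you yourself flag as the main obstacle, and your sketch does not close it: nothing guarantees $K\cap\cW^u(x)$ has positive leaf volume, and the ``contradicting compactness of $K$'' endgame is not an argument. Your second step requires, for Pujals--Sambarino, a point on every unstable leaf whose $\omega$-limit lies in a \emph{uniformly} hyperbolic set; your Borel--Cantelli/Pliss sketch yields positive center exponent for limit measures, but not a uniformly hyperbolic invariant subset of $\omega(z)$, which is a strictly stronger conclusion. The paper avoids both difficulties by replacing the uniform-hyperbolicity hypothesis of \cite{PS} with the measure-theoretic control of $\G(g)$ plus the topological fact $W^u=\cF^{cu}$.
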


\begin{remark}\label{rk.sharpbound}
Although the bound by $\log \lambda_{u}$ for the metric entropy of measures is sharp in~\cite{VY2} (see Propositions ~\ref{p.isomophic} and~\ref{p.noatom}),
our condition here
is not, since the hypothesis above does not hold under perturbations. However, the strong stable foliation remains robustly minimal.
\end{remark}

In the volume preserving setting the knowledge about these diffeomorphisms is more complete. Obviously, transitivity in this situation is easier to obtain. In case the  DA diffeomorphism is conservative, transitivity is a consequence of the results of \cite{HU}. There is even  a more complete description as Gan and Shi \cite{GS} have shown that these diffeomorphisms are ergodic. 

\subsection{Structure of the proof of Theorem \ref{m.boundedpotention}} Usually, when proving that a stable foliation is robustly minimal, 
the proof, roughly speaking, can be divided into two steps:
\begin{enumerate}
	\item[(a)] for every open set $U$ the forward iteration of $U$ contains a set that has uniform size along the center-unstable direction;
	\item[(b)] every (strong) stable leaf must intersect this set.
\end{enumerate}

In~\cite{BDU} this is achieved by showing the existence of an {$s$-section}, that is, a two-dimensional surface that transversally intersects every stable leave. 
In~\cite{PS} the authors proved the step (a) under the assumption that for every $x\in M$, there is a point $y$ in $\cF^u(x)$ whose center bundle is uniformly expanding. 
In the meantime, $f$ having a minimal strong stable foliation implies that the strong stable leaves for every nearby $C^1$ diffeomorphism $g$ are $\varepsilon$-dense, thus satisfying the step (b).

The proof in our paper also follows this path, albeit with a completely different technique:

To achieve the step (a), we mainly deal with the $cu$-bundle. We show that a diffeomorphism under the assumptions of Theorem~\ref{m.boundedpotention} has {\em mostly expanding center} (Section~\ref{s.ME}). Thus, by a general technique for diffeomorphisms with mostly expanding center introduced in Section~\ref{s.theoryofME},  for each nearby $C^1$ diffeomorphism, the forward orbit of Lebesgue almost every point is eventually expanding along the center direction. See \cite{Y1} for more details.

The main tool to study diffeomorphisms with mostly expanding center is a subset of the invariant  probability measures which we denote by $G(f)$. To be more precise, we consider the following set of invariant measures:
\begin{equation*}
\G^u(f)=\{\mu\in \cM_{inv}(f): h_\mu(f,\cF^u)\geq \int \log(\det(Tf\mid_{E^u(x)}))d\mu(x)\}.
\end{equation*}
$$
\G^{cu}(f)=\{\mu\in \cM_{\inv}(f): h_\mu(f)\geq \int \log(\det(Tf\mid_{E^{cu}(x)}))d\mu(x)\}
$$
and consider their intersection:
\begin{equation*}
\G(f)=\G^{u}(f)\cap \G^{cu}(f). 
\end{equation*}
The definition of $h_\mu(f,\cF^u)$, the partial entropy along the foliation $\cF^u$, can be found in Section~\ref{ss.2.3}. Here we collect some properties of $G^u(f), G^{cu}(f)$ and $G(f)$ which will be important to us. The precise statements of those properties can be found in Section~\ref{s.theoryofME}. For any $C^1$ partially hyperbolic diffeomorphisms $f$  (see~\cite{HYY}):
\begin{enumerate}
\item 

The set $G ^u(f)$ is non-empty, convex and compact.
If $f$ is $C^2$ then $G^u(f)$ is the set of Gibbs $u$-states of $f$. 
$G^u(f)$  varies upper semi-continuously in $C^1$ topology: if $f_n\xrightarrow{C^1} f$ then $\limsup_n G^u(f_n)\subset G^u(f)$.
$G^u(f)$ contains all the weak-* limit points  of the Ces\'aro  limit at Lebesgue almost every $x$. And 
the extremal elements of $G^u(f) $ are ergodic. 
\item Furthermore, the set $G^{cu}(f)$ 
is non-empty and convex and contains all the weak-* limit points  of the Ces\'aro  limit at Lebesgue almost every $x$.
\item Finally, if $f$ has mostly expanding center, then:

 $G(f)$ is compact and contains all the physical measures of $f$. The
 extremal elements of $G(f) $ are ergodic. 
The measures in $G(f)$ have only positive center exponents and satisfy Pesin's entropy formula. 
 $G(f)$ varies upper semi-continuously in the $C^1$ topology.
\end{enumerate}
The last two properties of $G(f)$ allow us to apply the Pliss Lemma and hyperbolic times argument to nearby $C^1$ diffeomorphisms. This shows that the local unstable manifolds at certain iterations along typical orbits must have uniform size. See Lemma~\ref{l.hyperbolictime}.


The proof of the step (b) is more involved. We have to deal with the strong stable foliation without any assumption on minimality or even  transitivity of $f$. We show that, for any $C^1$ DA diffeomorphism, and any ergodic measure with positive center exponent and with entropy larger than $\log \lambda_{u}$, the  Pesin unstable manifolds at regular points, whose existence is given by the Pesin's theory, coincide with $\cF^{cu}(x).$
The Pesin unstable manifold of a regular point $x\in M$ consists of the points $y\in M$ such that 
$$\lim_{n\to\infty}d(f^{-n}(x),f^{-n}(y))=0. $$ The Pesin unstable manifold $W^{u}(x)$ is dynamically defined, whereas $\cF^{{cu}}(x)$ is topologically defined. In general $W^{u}(x)\subset \cF^{cu}(x)$, but they are not necessarily equal. In Proposition \ref{p.sizeofstablemanifold} we prove that for certain points $x$ both manifolds coincide. 
This part heavily uses the results in~\cite{VY2} on the classification of measures with large entropy for DA diffeomorphisms. Since the lift of $\cF^{cu}$ and $\cF^s$ to the universal covering space form a product structure, it follows that the Pesin unstable manifolds are $s$-sections, intersecting every stable leaf.

\subsection{New examples}
Let us observe that the previous method only works for the minimality of the strong stable foliation, the
minimality of the strong unstable foliation is still an open question, even when the partially hyperbolic diffeomorphism is Anosov. But with the criterion above we may provide the following:

\begin{main}\label{main.example}
There exist DA diffeomorphisms on $\mathbb T^3$ such that both strong stable and unstable foliations are robustly minimal. 
\end{main}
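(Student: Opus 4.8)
The plan is to combine Theorem A with an explicit symmetric construction that forces the same criterion to hold for $f^{-1}$, so that both strong foliations become robustly minimal. I would start from a well-chosen linear Anosov $A$ of $\TT^3$ with eigenvalues $0<\lambda_s<1<\lambda_c<\lambda_u$ and arrange, if possible, that $A$ is conjugate to its own inverse up to a coordinate permutation — or at least that $A^{-1}$ is again a linear Anosov with the analogous eigenvalue pattern $0<\lambda_u^{-1}<1<\lambda_c^{-1}<\lambda_s^{-1}$, so that the roles of the strong stable and strong unstable bundles get exchanged. The point is that Theorem A applied to $f$ gives robust minimality of $\cF^s$, while Theorem A applied to $f^{-1}$ (which lies in $\cD^2(A^{-1})$, still a linear Anosov class with three distinct real eigenvalues) gives robust minimality of $\cF^u$, provided we can produce a single $f$ for which \emph{both} volume-expansion hypotheses hold: the set $\cB(f)=\{x:|\det(Tf|_{E^{cu}(x)})|\le\lambda_u\}$ has zero leaf volume in every strong unstable leaf \emph{and} the set $\cB(f^{-1})=\{x:|\det(Tf^{-1}|_{E^{sc}(x)})|\le\lambda_s^{-1}\}$ has zero leaf volume in every strong stable leaf of $f$.

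The second, and main, task is therefore to build such an $f$ by a controlled deformation, in the spirit of Ma\~n\'e's DA construction but done symmetrically. I would take $f$ to equal $A$ outside a small neighborhood of a fixed point $p$, and inside that neighborhood perform a perturbation that both weakens the expansion in the $E^c$ direction near $p$ (to leave the Anosov class and create the DA behavior) and is designed so that $\det(Tf|_{E^{cu}})$ stays strictly above $\lambda_u$ away from a measure-zero set and, simultaneously, $\det(Tf|_{E^{sc}})$ stays strictly below $\lambda_s$ away from a measure-zero set. Since outside the perturbation neighborhood both determinants equal $\lambda_c\lambda_u>\lambda_u$ and $\lambda_s\lambda_c$ — and here one must check $\lambda_s\lambda_c<1$, i.e. one should pick $A$ with $\lambda_c$ not too large compared to $\lambda_s^{-1}$, equivalently $\lambda_s\lambda_c<1<\lambda_c\lambda_u$ automatically from $\lambda_s<1<\lambda_u$ only if $\lambda_c<\lambda_s^{-1}$, which we impose on $A$ — the only delicate region is the support of the bump. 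There I would keep the perturbation $C^1$-small and arrange that it modifies $Tf$ essentially along a single direction at a time (pushing $E^c$-contraction) while the $cu$- and $sc$-Jacobians are perturbed only by an amount small enough that the strict inequalities survive except possibly on the codimension-one set where the derivative of the bump vanishes, which has zero leaf volume.

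The step I expect to be the real obstacle is guaranteeing the zero-leaf-volume condition \emph{simultaneously} for $f$ and $f^{-1}$ while still producing a genuine DA diffeomorphism (one that is partially hyperbolic, isotopic to $A$, and not Anosov). Weakening center expansion near $p$ pushes $\det(Tf|_{E^{cu}})$ downward precisely there, and one must verify it never drops to $\lambda_u$ on a positive-leaf-volume set — this forces the bump's effect on the $cu$-Jacobian to be one-sided or at least to vanish only on a thin set. Dually, the interaction with $f^{-1}$ means the same perturbation must not push $\det(Tf^{-1}|_{E^{sc}})$ up to $\lambda_s^{-1}$ on a large set. Reconciling these two constraints with partial hyperbolicity of $f$ is where care is needed; I would handle it by choosing the perturbation to act as a shear that decreases the center rate monotonically along a one-parameter family, so that the bad sets for $f$ and for $f^{-1}$ are both contained in the zero set of a single nonvanishing-gradient function, hence of zero leaf volume. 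Once such an $f$ is exhibited, Theorem A applied to $f$ and to $f^{-1}$ yields that $\cF^s$ and $\cF^u$ are each robustly minimal on a $C^1$-neighborhood of $f$, completing the proof.
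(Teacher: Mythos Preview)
Your approach has a fundamental obstruction: Theorem~A cannot be applied to $f^{-1}$. The hypothesis of Theorem~A requires the linear model to have eigenvalues $0<\lambda_s<1<\lambda_c<\lambda_u$, i.e.\ \emph{two} expanding eigenvalues with the center one greater than~$1$. But $A^{-1}$ has eigenvalues $\lambda_u^{-1}<\lambda_c^{-1}<1<\lambda_s^{-1}$, since $\lambda_c>1$ forces $\lambda_c^{-1}<1$. So $A^{-1}$ has only one expanding eigenvalue and its center is \emph{contracting}; the inequality chain $0<\lambda_u^{-1}<1<\lambda_c^{-1}<\lambda_s^{-1}$ you wrote down is impossible. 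Your alternative of arranging $A$ conjugate to $A^{-1}$ fails for the same reason: equality of the eigenvalue sets would force $\lambda_c=\lambda_c^{-1}$, hence $\lambda_c=1$, contradicting hyperbolicity. The paper in fact remarks explicitly that ``the previous method only works for the minimality of the strong stable foliation,'' so there is no symmetric version of Theorem~A to invoke.

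The paper's construction is accordingly asymmetric. For $\cF^s$ it does use Theorem~A, but for $\cF^u$ it relies on a completely different mechanism: the Bonatti--D\'iaz--Ures criterion via a \emph{complete $u$-section}. To make both work simultaneously, the authors do not perturb near a fixed point in the Ma\~n\'e style; instead they take a sequence of linear models $A_k$ with $\lambda^c_k\searrow1$ and perturb $A_k^2$ in a thin tubular neighborhood of a \emph{long center segment} $I_k$ so that $Tf_k|_{E^c}$ equals~$1$ on $I_k$ and is strictly larger than~$1$ elsewhere. This makes $\cB(f_k)=I_k$, a one-dimensional center arc with zero leaf volume in every unstable leaf, so Theorem~A yields robust minimality of $\cF^s$. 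For $\cF^u$, the key is that $I_k$ can be made arbitrarily long and eventually $\varepsilon$-dense in $\TT^3$; after a further small perturbation sending $I_k$ into its interior, the union $\bigcup_{x\in I_k}\cF^s(x)$ becomes a complete $u$-section, and the BDU theorem then gives robust minimality of the unstable foliation. The delicate work is controlling partial hyperbolicity and the cone widths while the perturbation is supported on a long, nearly dense tube rather than a small ball.
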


The construction is quite different from the one used by R. Ma\~n\'e in \cite{M}. Ma\~n\'e carefully performed the perturbation near a fixed point 
so that the perturbed diffeomorphism remains partially hyperbolic. Hence the modification is mainly supported inside a small ball, and the non-hyperbolicity is local. In our construction, we need to modify the dynamics of the linear Anosov diffeomorphism
in a cylindrical neighborhood of a long center segment which could eventually become $\vep$-dense for a small $\vep>0$. Therefore we need to carefully choose the linear Anosov diffeomorphism, and a subtle analysis is required. Indeed, we will choose a sequence of linear Anosov automorphisms $A_k$ so that their center Lyapunov exponent converge to zero. Such a sequence has been firstly considered in \cite{PT}. In this manner we obtain a new way of reaching the boundary of the set of Anosov diffeomorphisms, at least in the isotopy classes of $A_k$ for $k$ sufficiently large.
\par
Can we perform this construction in the isotopy class of any hyperbolic automorphism of the 3-torus? Concretely, we have the following: 

\begin{conjecture}
Let $A$ be a linear Anosov diffeomorphism  of $\TT^3$ with eigenvalues
$$0<\lambda_{s}<1<\lambda_{c}<\lambda_{u}.$$
There is a $C^1$ open set of partially hyperbolic diffeomorphisms $\cU$ isotopic to $A$, such that
for any diffeomorphism $g\in \cU$, both its strong stable and unstable foliation are minimal.
\end{conjecture}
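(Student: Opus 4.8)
\emph{Overall strategy.} The plan is to apply Theorem~\ref{m.boundedpotention} to a single, carefully built DA diffeomorphism $f$ \emph{and} to its inverse $f^{-1}$, and then exploit the openness of the conclusion. Note that if $f\in\cD^2(A)$ then $f^{-1}\in\cD^2(A^{-1})$, that the strong unstable foliation of $f$ is the strong stable foliation of $f^{-1}$, and that $g\mapsto g^{-1}$ is a homeomorphism of $\Diff^1(\TT^3)$. So if both $f$ and $f^{-1}$ satisfy the hypothesis of (a version of) Theorem~\ref{m.boundedpotention}, there are $C^1$ neighborhoods $\cU_1\ni f$ and $\cU_2\ni f^{-1}$ on which, respectively, the strong stable foliation is minimal; then $\cU:=\cU_1\cap(\cU_2)^{-1}$ is a $C^1$ neighborhood of $f$ on which every diffeomorphism has both strong foliations minimal, and $f$ is the example. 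A minor point: $A^{-1}$ has center eigenvalue $\lambda_c^{-1}<1$, so Theorem~\ref{m.boundedpotention} is not literally stated for $f^{-1}$; however, nothing in its proof depends on the sign of $\log\lambda_c$ — only on which eigenvalue is the largest — so the argument (mostly expanding center for the one-sided Gibbs measures via the partial-entropy bound, Lemma~\ref{l.hyperbolictime}, the product structure of the lifted foliations on the universal cover, and the large-entropy classification of \cite{VY2}) goes through for $f^{-1}$ with $A_k^{-1}$, whose largest eigenvalue is $\lambda_s^{-1}$.

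\emph{The linear model.} I would take $A=A_k$ from the family of Ponce--Tahzibi \cite{PT}: linear Anosov automorphisms of $\TT^3$ with three positive real eigenvalues $0<\lambda_s<1<\lambda_c<\lambda_u$ and $\log\lambda_c\to 0$; fix $k$ large. The near-neutral center is what makes room for the rest: a $C^1$-small perturbation can then bend $\|Tf|_{E^c}\|$ across the neutral value $1$, and only a $C^1$-small boost of the unstable Jacobian, with a matching tightening of the stable one, is needed to keep $|\det(Tf|_{E^{cu}})|>\lambda_u$, respectively $|\det(Tf|_{E^{cs}})|<\lambda_s$, off a negligible set.

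\emph{The perturbation.} Fix $\vep>0$ small and a long compact center segment $\gamma$ of $A_k$ that is $\vep$-dense in $\TT^3$ (its center foliation is minimal). I would let $f=f_k$ be a $C^1$-small perturbation of $A_k$ built along $\gamma$ inside a cylindrical (tubular) neighborhood of it, arranged so that: $f$ is partially hyperbolic, isotopic to $A_k$, and not Anosov ($\|Tf|_{E^c}\|$ crosses $1$); $|\det(Tf|_{E^{cu}(x)})|>\lambda_u$ for every $x$ outside a set $\cB(f)$ which is a finite union of pieces transverse to the strong unstable foliation — concretely, center--stable plaques lying in the cylinder — hence leaf-null in every strong unstable leaf; and, symmetrically, $|\det(Tf|_{E^{cs}(x)})|<\lambda_s$ outside a set which is a union of center--unstable plaques and therefore leaf-null in every strong stable leaf. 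Using $|\det(Tf^{-1}|_{E^{cu}_{f^{-1}}(x)})|=|\det(Tf|_{E^{cs}(f^{-1}x)})|^{-1}$ and that $\lambda_s^{-1}$ is the top eigenvalue of $A_k^{-1}$, the last condition is exactly the hypothesis of Theorem~\ref{m.boundedpotention} for $f^{-1}$. Then Theorem~\ref{m.boundedpotention} applied to $f$ and to $f^{-1}$ produces the neighborhood $\cU$ above and finishes the proof.

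\emph{The main obstacle.} Everything rests on producing this perturbation, and this is the delicate part. One cannot simply push $|\det(Tf|_{E^{cu}})|$ above $\lambda_u$ everywhere: since any $u$-Gibbs state $\mu$ of $f$ satisfies $\int\log\|Tf|_{E^u}\|\,d\mu=h_\mu(f,\cF^u)\le\log\lambda_u$ (a constraint forced by the isotopy class of $A_k$), the unstable Jacobian cannot exceed $\lambda_u$ throughout the support of $\mu$; and wherever it drops to $\le\lambda_u$ while the center contracts, $|\det(Tf|_{E^{cu}})|$ drops to $\le\lambda_u$ — so $\cB(f)$ is genuinely nonempty and must instead be made \emph{flat}, a union of center--stable surfaces, and likewise for its $cs$-counterpart. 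Engineering the perturbation so that the two "bad'' sets collapse onto such surfaces, while $\|Tf|_{E^c}\|$ still crosses $1$, is exactly what the construction along a center segment is designed to do, and it is why this construction must differ from Ma\~n\'e's local one: a perturbation supported in a small ball leaves $|\det(Tf|_{E^{cs}})|=\lambda_s\lambda_c>\lambda_s$ on a positive-volume set whose trace on stable leaves has positive leaf volume, so it can never satisfy the condition needed for $f^{-1}$. Choosing $\gamma$ long and $\vep$-dense, and $\log\lambda_c(A_k)$ tiny, is what lets one simultaneously keep the whole perturbation $C^1$-small, bend the center across $1$, adjust the unstable and stable Jacobians by the $O(\log\lambda_c)$ amounts required, confine $\cB(f)$ and its counterpart to center--stable, respectively center--unstable, plaques inside the cylinder, and preserve the cone fields, partial hyperbolicity and the isotopy class of $A_k$. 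Carrying all of this out at once is the technical heart of the construction; once it is in place, Theorem~\ref{m.boundedpotention} does the rest.
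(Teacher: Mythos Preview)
First, note that the statement you are trying to prove is explicitly a \emph{Conjecture} in the paper; the authors do not prove it. What the paper does prove is Theorem~\ref{main.example}, which produces such an open set only in the isotopy class of certain very special $A_k$ (those of~\cite{PT} with $k$ large). Your proposal, by taking $A=A_k$, at best targets Theorem~\ref{main.example}, not the Conjecture for an arbitrary $A$.

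Even restricted to that goal, your approach has a fatal gap: the hypothesis of Theorem~\ref{m.boundedpotention} can \emph{never} be satisfied by $f^{-1}$. Here is why. Since $A\in GL(3,\ZZ)$ with positive eigenvalues, $\lambda_s\lambda_c\lambda_u=1$, so the largest eigenvalue of $A^{-1}$ is $\lambda_s^{-1}$ and
\[
\log\lambda_s^{-1}=\log\lambda_c+\log\lambda_u=h_{top}(A)=h_{top}(f)
\]
(the last equality by Corollary~\ref{c.preserceentropy} and the variational principle). Now suppose $\cB(f^{-1})=\{x:|\det(Tf^{-1}|_{E^{cu}_{f^{-1}}(x)})|\le\lambda_s^{-1}\}$ had zero leaf volume on every $\cF^u_{f^{-1}}$-leaf. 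Take any $\mu\in\G(f^{-1})$ (nonempty by Proposition~\ref{p.physical}). Since $\mu\in\G^u(f^{-1})$ and $f^{-1}$ is $C^2$, $\mu$ is a Gibbs $u$-state of $f^{-1}$ (Remark~\ref{rk.Gu}(a)), so $\mu(\cB(f^{-1}))=0$ and hence $\int\log|\det(Tf^{-1}|_{E^{cu}_{f^{-1}}})|\,d\mu>\log\lambda_s^{-1}$. But $\mu\in\G^{cu}(f^{-1})$ gives $h_\mu(f^{-1})\ge\int\log|\det(Tf^{-1}|_{E^{cu}_{f^{-1}}})|\,d\mu$, forcing $h_\mu(f^{-1})>\log\lambda_s^{-1}=h_{top}(f^{-1})$, which is absurd. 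In other words, your condition ``$|\det(Tf|_{E^{cs}})|<\lambda_s$ off a stable-leaf-null set'' is impossible for any $f\in\cD^2(A)$; no amount of engineering along a long center segment can achieve it. Your dismissal of the sign of $\log\lambda_c$ as a ``minor point'' overlooks precisely this obstruction: when $\lambda_c>1$, the threshold $\log\lambda_u$ lies strictly below $h_{top}(A)$ and there is room for large-entropy measures; for $A^{-1}$ the threshold \emph{equals} the topological entropy.

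For comparison, the paper's route to Theorem~\ref{main.example} is asymmetric: Theorem~\ref{m.boundedpotention} handles only the stable foliation, while the unstable foliation is obtained via a complete $u$-section and the Bonatti--D\'iaz--Ures criterion (Theorem~\ref{BDU}). The construction makes $Tf_k|_{E^c}\ge1$ everywhere, with equality exactly on a long center segment $I_k$ that, after a small further perturbation, yields an invariant $cs$-surface meeting every unstable leaf. Robust transitivity (needed for Theorem~\ref{BDU}) comes for free from the stable-side minimality.
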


\subsection{Structure of the paper}
This paper is organized in the following way.

In Section~\ref{s.prelinimaries} we give some necessary background.

In Section~\ref{s.theoryofME} we go beyond the scope of DA diffeomorphisms, and introduce a general theory for diffeomorphisms with mostly expanding center direction. In particular, 
we provide the main tool for the study of such diffeomorphisms: a special space of probability measures, denoted by $\G(f)$, 
which is defined using the partial entropy along unstable leaves. See Definition~\ref{df.G} and~\eqref{eq.G(f)}.  Then we verify that the diffeomorphisms we are considering have mostly expanding center. This is carried out in Section~\ref{s.ME}, Lemma~\ref{lemma.positive}.

Section~\ref{s.hyperbolicity} consists of a different theory which only applies for  DA diffeomorphisms.
 We show that any ergodic probability measure with large entropy is hyperbolic, and moreover,
the unstable manifold of a typical point coincides with the corresponding center-unstable leaf of the partially hyperbolic diffeomorphism. See Proposition~\ref{p.sizeofstablemanifold}.

Theorem~\ref{m.boundedpotention} is proven in Section~\ref{s.sminimal}, 
and in Section~\ref{s.examples} we build the examples of Theorem~\ref{main.example}.

\section{Preliminaries\label{s.prelinimaries}}
In this section we introduce the necessary background for the proofs.
Throughout this section we keep the same hypothesis as in the first section; that is, $A$ is
a three dimensional linear Anosov diffeomorphism with eigenvalues $0<\lambda_{s}<1<\lambda_{c}<\lambda_{u}$.

\subsection{Dynamical coherence} \label{subsection.dynamical.coherence}
A partially hyperbolic diffeomorphism is said to be {\em dynamically coherent} if it admits invariant foliations $\cF^i$, $i=cs,cu, c$, tangent to the corresponding bundles at each point. \par

By Franks~\cite{Fra70}, for every DA diffeomorphism $f\in\cD^1(A)$, there exists a continuous surjective map $\phi: \TT^3\to \TT^3$
that semi-conjugates $f$ to $A$, that is, $\phi \circ f=A\circ\phi$. The following properties of $\phi$ hold for $f\in\cD^{1}(A)$:

\begin{proposition}\label{p.coherent}
Suppose $f\in \cD^1(A)$. Then $f$ is dynamically coherent, and the Franks' semi-conjugacy $\phi$ maps the strong stable, center stable, center, and center unstable leaves of $f$ into the corresponding leaves of $A$. Moreover,
\begin{itemize}
\item[(a)] $\phi$ restricted to each strong stable leaf is bijective;

\item[(b)] there is $K>0$ only depending on $f$, such that for every
$x\in \TT^3$, $\phi^{-1}(x)$ is either a point, or a connected segment inside a center leaf with length bounded
by $K$.

\end{itemize}
\end{proposition}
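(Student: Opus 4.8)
The plan is to lift to the universal cover $\RR^3$ and combine Franks' theorem with the structure theory of partially hyperbolic diffeomorphisms in the isotopy class of $A$. Fix a lift $\tilde f$ of $f$; by Franks~\cite{Fra70} the semi-conjugacy $\phi$ lifts to a proper, degree-one surjection $\tilde\phi\colon\RR^3\to\RR^3$ that commutes with deck transformations, satisfies $\tilde\phi\circ\tilde f=\tilde A\circ\tilde\phi$ (here $\tilde A$ is the linear lift of $A$, with eigenspaces $E^s_A,E^c_A,E^u_A$ and eigenvalues $\lambda_{s}<1<\lambda_{c}<\lambda_{u}$), and stays at bounded distance from the identity; set $C:=\|\tilde\phi-\id\|_\infty<\infty$. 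The geometric input I will use throughout --- due to Brin--Burago--Ivanov, Potrie and Hammerlindl --- is: $f$ is dynamically coherent; the lifted leaves of $\cF^s,\cF^c,\cF^u$ and of $\cF^{cs},\cF^{cu}$ are uniformly quasi-isometrically embedded, each such leaf lying within a uniform Hausdorff distance $R$ of the affine translate of the corresponding eigenspace of $\tilde A$; and the foliations $\tilde\cF^{cs},\tilde\cF^{cu},\tilde\cF^s,\tilde\cF^u$ have global product structures matching those of the eigenfoliations of $\tilde A$. Recalling and citing this theory is really the main point; everything else is soft once it is available.

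I would first show that $\tilde\phi$ carries the leaves of $\tilde f$ into the eigenleaves of $\tilde A$, by one mechanism. Fix a norm adapted to $E^s_A\oplus E^c_A\oplus E^u_A$ and write $v=v_s+v_c+v_u$. If $q_1,q_2$ lie on a common $\tilde\cF^{cu}$-leaf, then $|(q_1)_s-(q_2)_s|\le 2R$, and the same holds for $\tilde f^{-n}q_1,\tilde f^{-n}q_2$ for every $n\ge 0$; since $\tilde\phi\circ\tilde f^{-n}=\tilde A^{-n}\circ\tilde\phi$ and $\|\tilde\phi-\id\|_\infty=C$, the quantity $\lambda_{s}^{-n}\,|(\tilde\phi q_1)_s-(\tilde\phi q_2)_s|=|(\tilde A^{-n}\tilde\phi q_1)_s-(\tilde A^{-n}\tilde\phi q_2)_s|$ stays bounded in $n\ge 0$, and since $\lambda_{s}^{-1}>1$ this forces $(\tilde\phi q_1)_s=(\tilde\phi q_2)_s$, i.e.\ $\tilde\phi(q_1)-\tilde\phi(q_2)\in E^c_A\oplus E^u_A$. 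Hence $\tilde\phi$ maps each center-unstable leaf into an affine $cu$-plane. The same computation, run forward in time (using the expanding coordinates $\lambda_{u}$, resp.\ $\lambda_{c}$ and $\lambda_{u}$), shows $\tilde\phi$ maps each center-stable leaf into a $cs$-plane and each strong stable leaf into an $s$-line; intersecting, it maps each center leaf into a center line. Projecting to $\TT^3$ gives the statement on leaves.

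For (a), I would use that $\tilde f^{-1}$ uniformly expands the one-dimensional bundle $E^s$. If $y,z\in\tilde\cF^s(x)$ and $\tilde\phi(y)=\tilde\phi(z)$, then $\tilde\phi(\tilde f^ny)=\tilde\phi(\tilde f^nz)$ for all $n\in\ZZ$, hence $\|\tilde f^ny-\tilde f^nz\|\le 2C$ for all $n$; but as $n\to-\infty$ the intrinsic length of the stable arc from $\tilde f^ny$ to $\tilde f^nz$ grows without bound unless $y=z$, and by quasi-isometry so does $\|\tilde f^ny-\tilde f^nz\|$ --- a contradiction. So $\tilde\phi$ is injective on $\tilde\cF^s(x)$. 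It is also onto $E^s_A+\tilde\phi(x)$, since its image there is connected and, $\tilde\cF^s(x)$ being a complete leaf within distance $R$ of that line, unbounded in both directions. Since stable leaves are homeomorphic to $\RR$, the covering projection restricts to a bijection from a lifted stable leaf of $f$ (resp.\ of $A$) onto the corresponding leaf downstairs, so $\phi$ restricted to each strong stable leaf of $f$ is a bijection onto a stable leaf of $A$.

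For (b), the fiber $\tilde\phi^{-1}(\tilde x)$ has diameter $\le 2C$, and for $y,z$ in it one has $\|\tilde f^ny-\tilde f^nz\|\le 2C$ for all $n\in\ZZ$. I claim $y,z$ lie on a common $cs$-leaf: otherwise, taking $p\in\tilde\cF^{cs}(y)$ and $p':=\tilde\cF^u(p)\cap\tilde\cF^{cs}(z)$ one has $p\ne p'$, so $\|\tilde f^np-\tilde f^np'\|\to\infty$ as $n\to+\infty$ (uniform expansion of $\cF^u$ and quasi-isometry of $u$-leaves); as the $u$-leaf joining them lies within bounded distance of an affine $E^u_A$-line, this divergence is, up to a bounded error, in the $u$-coordinate, and since every $cs$-leaf stays within bounded $u$-distance of an affine $cs$-plane this forces $\|\tilde f^ny-\tilde f^nz\|\to\infty$, a contradiction. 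Symmetrically, using backward expansion of $\cF^s$, $y,z$ lie on a common $cu$-leaf, whence $z\in\tilde\cF^{cs}(y)\cap\tilde\cF^{cu}(y)=\tilde\cF^c(y)$; so $\tilde\phi^{-1}(\tilde x)$ lies in a single center leaf. If $w$ lies between $y$ and $z$ on that leaf but $\tilde\phi(w)\ne\tilde\phi(y)$, then by the first step $\tilde\phi(w)-\tilde\phi(y)$ is a nonzero vector of $E^c_A$, so $|\tilde A^n(\tilde\phi(w)-\tilde\phi(y))|=\lambda_{c}^{\,n}\,|\tilde\phi(w)-\tilde\phi(y)|\to\infty$, while $\tilde f^nw$ stays between $\tilde f^ny$ and $\tilde f^nz$, hence within $2C$ of $\tilde f^ny$, forcing $\|\tilde\phi(\tilde f^nw)-\tilde\phi(\tilde f^ny)\|\le 4C$ --- a contradiction. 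Thus $\tilde\phi^{-1}(\tilde x)$ is a point or a subarc of a center leaf with endpoints at distance $\le 2C$, hence, by quasi-isometry of the center leaves, of length at most a constant $K$ depending only on $f$; projecting to $\TT^3$ gives (b). The essential difficulty is the structure theory recalled in the first paragraph; granting it, the proposition reduces to Franks' bound and the absence of unit eigenvalues of $A$.
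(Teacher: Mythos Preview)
Your argument is correct and is essentially the standard one. Note, however, that the paper does not actually prove this proposition: it simply invokes Potrie~\cite{Pot15} for dynamical coherence and cites \cite{Ure12} and \cite[Proposition~3.1]{VY2} for items (a) and (b). What you have written is a clean, self-contained reconstruction of the arguments contained in those references --- lift to $\RR^3$, use Franks' bound $\|\tilde\phi-\id\|_\infty<\infty$, and exploit that the lifted foliations shadow the linear eigenfoliations (quasi-isometry and global product structure). You are also right that the genuinely hard input is the structure theory you cite in the first paragraph: once one grants that the lifted leaves lie within uniform Hausdorff distance of the corresponding affine subspaces and that the $cs/cu$ foliations have global product structure, everything else is the soft hyperbolic-dynamics computation you carry out. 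So your proof and the paper's differ only in that you spell out what the paper outsources; there is no alternative route here, just more detail.
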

\begin{proof}
By Potrie~\cite{Pot15}, $f$ is dynamically coherent. Items (a) and (b) are proven in \cite{Ure12} (see also \cite[Proposition 3.1]{VY2} for a proof of (b)).
\end{proof}

As a consequence of (b), we have:
\begin{corollary}\label{c.preserceentropy}
For any $f\in \cD^1(A)$, $\phi$ preserves  the metric entropy; that is, for any invariant measure $\mu$ of $f$:
$$h_\mu(f)=h_{\phi_*\mu}(A).$$
\end{corollary}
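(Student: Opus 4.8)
The plan is to prove the two inequalities $h_\mu(f)\ge h_{\phi_*\mu}(A)$ and $h_\mu(f)\le h_{\phi_*\mu}(A)$ separately. The first is the general fact that metric entropy does not increase under a topological factor map: every finite partition $\cQ$ of $\TT^3$ pulls back via $\phi$ to a finite partition $\phi^{-1}\cQ$ of $\TT^3$, and the relation $\phi\circ f=A\circ\phi$ turns $\bigvee_{j=0}^{n-1}f^{-j}\phi^{-1}\cQ$ into $\phi^{-1}\big(\bigvee_{j=0}^{n-1}A^{-j}\cQ\big)$, so that $h_\mu(f,\phi^{-1}\cQ)=h_{\phi_*\mu}(A,\cQ)$; taking the supremum over $\cQ$ gives $h_\mu(f)\ge h_{\phi_*\mu}(A)$.

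For the reverse inequality I would invoke Bowen's estimate for the entropy of an invariant measure through a topological factor map: writing $\nu=\phi_*\mu$,
\[
h_\mu(f)\ \le\ h_\nu(A)\ +\ \sup_{y\in\TT^3} h_{\mathrm{top}}\big(f,\phi^{-1}(y)\big),
\]
where $h_{\mathrm{top}}(f,\cdot)$ denotes Bowen's topological entropy of $f$ on an arbitrary (not necessarily invariant) subset. It therefore suffices to show $h_{\mathrm{top}}(f,\phi^{-1}(y))=0$ for every $y$. Fix $y$; if $\phi^{-1}(y)$ is a point there is nothing to prove, so by Proposition~\ref{p.coherent}(b) assume $\gamma:=\phi^{-1}(y)$ is a compact connected arc inside a single center leaf, of length at most $K$. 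Since $\phi\circ f=A\circ\phi$ and $f$ is a homeomorphism, $f^{n}(\gamma)=\phi^{-1}(A^{n}y)$ for all $n$, which is again a center arc of length $\le K$. Now take an $(n,\vep)$-separated set $\{x_{1},\dots,x_{s}\}\subset\gamma$, indexed in the order induced by an arc-length parametrization of $\gamma$, and let $\sigma_i$ be the subarc of $\gamma$ between $x_i$ and $x_{i+1}$. For each $i<s$ pick $j_i\in\{0,\dots,n-1\}$ with $d(f^{j_i}x_i,f^{j_i}x_{i+1})\ge\vep$, so that $f^{j_i}(\sigma_i)$ has length $\ge\vep$. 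For a fixed value $j$ of $j_i$, the arcs $f^{j}(\sigma_i)$ with $j_i=j$ have pairwise disjoint interiors and all lie in $f^{j}(\gamma)$, a curve of length $\le K$; hence there are at most $K/\vep$ indices $i$ with $j_i=j$, and summing over $j$ gives $s-1\le nK/\vep$. Thus the maximal cardinality of an $(n,\vep)$-separated subset of $\gamma$ grows at most linearly in $n$, whence $h_{\mathrm{top}}(f,\gamma)=0$. Combining the two inequalities yields $h_\mu(f)=h_{\phi_*\mu}(A)$.

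The only delicate point is the step $h_{\mathrm{top}}(f,\phi^{-1}(y))=0$: mere boundedness of the fiber does not suffice, since a set of small diameter can carry positive topological entropy. What makes the length-and-disjointness counting work is precisely that \emph{every} forward iterate of $\phi^{-1}(y)$ is again a center arc of length $\le K$ (Proposition~\ref{p.coherent}(b) applied to $A^n y$), combined with the one-dimensionality of the fiber. Everything else is standard measure-theoretic entropy bookkeeping and the classical factor-map inequality of Bowen.
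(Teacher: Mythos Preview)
Your proposal is correct and follows essentially the same route as the paper: both arguments show $h_{\mathrm{top}}(f,\phi^{-1}(y))=0$ by exploiting that every forward iterate $f^{n}(\phi^{-1}(y))=\phi^{-1}(A^{n}y)$ is a center arc of length $\le K$, and then conclude via the factor--entropy inequality (you cite Bowen, the paper cites Ledrappier--Walters; these give the same bound here). The only cosmetic difference is that the paper bounds $(n,\beta)$-\emph{spanning} sets while you bound $(n,\vep)$-\emph{separated} sets, and you additionally spell out the easy inequality $h_\mu(f)\ge h_{\phi_*\mu}(A)$, which the paper leaves implicit in the Ledrappier--Walters reference.
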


\begin{proof}
For every $x\in\TT^3$, one has 
$$
f(\phi^{-1}(x))  = \phi^{-1}(Ax )
$$
which has length bounded by $K$, and can be covered by no more than $2K/\beta$ many $\beta$-balls. For each $\beta>0$, denote by $r_n(x,\beta)$ the minimum cardinality of $(n,\beta)$-spanning sets of $\phi^{-1}(x)$. Then 
$$
r_n(x,\beta) \le 2Kn/\beta.
$$
This shows that $h_{top}(f, \phi^{-1}(x)) = 0$ for all $x\in\TT^3$. Then the corollary follows from the Ledrappier-Walters' formula \cite{LW}.
\end{proof}

Denote by $\tilde{f}$ the lift of $f$ to the universal covering space $\mathbb{R}^{3}$ and by $\tilde{\cF}^i_{\tilde{f}}$ ($i=s,c,u,cs,cu$) the lift of the corresponding foliations of $f$ to the universal covering space.

\begin{definition}\label{d.productstructure}
We say $f$ (or $\tilde{f}$) has \emph{global product structure} if for any two points $\tilde{x},\tilde{y}\in \mathbb{R}^3$,
$\tilde{\mathcal{\cF}}^{u}_{\tilde{f}}(\tilde{x})\bigcap \tilde{\mathcal{\cF}}^{cs}_{\tilde{f}}(\tilde{y})$ consists of a unique point, and
$\tilde{\mathcal{\cF}}^{cu}_{\tilde{f}}(\tilde{x})\bigcap \tilde{\mathcal{\cF}}^{s}_{\tilde{f}}(\tilde{y})$ consists of a unique point.
\end{definition}

The following result was proved in \cite[Proposition 7.3]{Pot15}:
\begin{proposition}\label{p.product}
Every $f\in \cD^1(A)$ has global product structure.
\end{proposition}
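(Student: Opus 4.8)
The statement is immediate for the linear model: the leaves of $\tilde{\cF}^\sigma_{\tilde A}$, $\sigma\in\{s,c,u,cs,cu\}$, are the cosets of the $\tilde A$-eigenspaces $E^s_A$, $E^c_A$, $E^u_A$ and of their sums, and $E^s_A$ is complementary to $E^{cu}_A=E^c_A\oplus E^u_A$ while $E^u_A$ is complementary to $E^{cs}_A=E^s_A\oplus E^c_A$. The plan is to transport this product structure to $\tilde f$ through the lift $\tilde\phi:\RR^3\to\RR^3$ of the Franks semiconjugacy $\phi$ of Proposition~\ref{p.coherent}. First I would record the properties of $\tilde\phi$ that make the transport possible: (i) since $\phi$ is homotopic to the identity, $\tilde\phi-\id$ is $\ZZ^3$-periodic, hence bounded, so $\tilde\phi$ is proper and, being a lift of a surjection, onto; (ii) $\tilde\phi\circ\tilde f=\tilde A\circ\tilde\phi$; (iii) by the lift of Proposition~\ref{p.coherent}, $\tilde\phi$ maps each leaf $\tilde{\cF}^\sigma_{\tilde f}(\tilde x)$ into $\tilde{\cF}^\sigma_{\tilde A}(\tilde\phi(\tilde x))$, and does so bijectively when $\sigma=s$; (iv) each fibre $\tilde\phi^{-1}(p)$ is a point or a compact arc inside a center leaf, and since a center leaf lies in a single $cu$-leaf and a single $cs$-leaf, every $cs$-leaf and every $cu$-leaf of $\tilde f$ is saturated by such fibres.

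The key intermediate step is that $\tilde\phi$ restricted to any leaf $\tilde{\cF}^\sigma_{\tilde f}(\tilde x)$ is \emph{onto} $\tilde{\cF}^\sigma_{\tilde A}(\tilde\phi(\tilde x))$, for every $\sigma$. I would derive this from the leaf conjugacy, in the universal cover, between the invariant foliations of a DA diffeomorphism of $\TT^3$ and the linear ones --- in particular from the fact that each leaf of $\tilde{\cF}^\sigma_{\tilde f}$ lies at bounded Hausdorff distance from the corresponding coset of $E^\sigma_A$ and is a graph over it (this quasi-isometry is the Brin--Burago--Ivanov/Hammerlindl/Potrie estimate and is part of the structure theory of DA diffeomorphisms on $\TT^3$). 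Granting it, after identifying both $L:=\tilde{\cF}^\sigma_{\tilde f}(\tilde x)$ and $\tilde{\cF}^\sigma_{\tilde A}(\tilde\phi(\tilde x))$ with $\RR^{\dim E^\sigma}$ via the nearest-point projection onto that coset, $\tilde\phi|_L$ becomes a proper continuous self-map of $\RR^{\dim E^\sigma}$ at bounded distance from the identity; such a map extends to a degree-$(\pm1)$ self-map of the one-point compactification and is therefore surjective.

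Granting this surjectivity step, the product structure follows by a short diagram chase, which I illustrate for the pair $(\tilde{\cF}^{cu}_{\tilde f},\tilde{\cF}^s_{\tilde f})$. Given $\tilde x,\tilde y$, let $z$ be the unique point of $\tilde{\cF}^{cu}_{\tilde A}(\tilde\phi\tilde x)\cap\tilde{\cF}^s_{\tilde A}(\tilde\phi\tilde y)$, choose $w\in\tilde{\cF}^{cu}_{\tilde f}(\tilde x)$ with $\tilde\phi(w)=z$, and let $w'$ be the unique point of $\tilde{\cF}^s_{\tilde f}(\tilde y)$ with $\tilde\phi(w')=z$ (existence and uniqueness of $w'$ by the bijectivity of $\tilde\phi$ on strong stable leaves). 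Then $w$ and $w'$ both lie in the fibre $\tilde\phi^{-1}(z)$, which by (iv) is a center arc contained in $\tilde{\cF}^{cu}_{\tilde f}(\tilde x)$; hence $w'\in\tilde{\cF}^{cu}_{\tilde f}(\tilde x)\cap\tilde{\cF}^s_{\tilde f}(\tilde y)$, giving existence. For uniqueness, any two points of this intersection have the same $\tilde\phi$-image $z$ and lie on a common strong stable leaf, on which $\tilde\phi$ is injective, so they coincide. The pair $(\tilde{\cF}^u_{\tilde f},\tilde{\cF}^{cs}_{\tilde f})$ is handled identically, with $cs$-leaves replacing $cu$-leaves; the only extra point is that for uniqueness one uses that $\tilde\phi$ is injective on strong unstable leaves, i.e. that a strong unstable leaf meets each center leaf at most once --- which is part of the within-$cu$-leaf product structure supplied by the dynamical-coherence theory of DA diffeomorphisms.

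The one genuinely hard ingredient is the leaf conjugacy / quasi-isometry of the invariant foliations in the universal cover; everything else is soft point-set topology together with an elementary degree computation. This is where the hypotheses really enter --- that $f$ is isotopic to a hyperbolic automorphism of $\TT^3$ carrying a partially hyperbolic splitting with real eigenvalues --- and it is exactly the technical heart of~\cite{Pot15}, whose Proposition~7.3 is the statement in question; in a write-up one could of course simply quote it.
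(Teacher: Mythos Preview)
The paper does not prove this proposition at all: it is stated immediately after the sentence ``The following result was proved in \cite[Proposition~7.3]{Pot15}'' and no argument is given. Your proposal, by contrast, sketches an actual proof --- transporting the linear product structure through the lifted Franks semiconjugacy, using surjectivity of $\tilde\phi$ on leaves (via the quasi-isometry/graph property of the lifted foliations) together with the known description of the fibres as center arcs. The outline is essentially correct, and you yourself identify that the hard analytic input (leaves are graphs over the linear directions at bounded distance) is precisely the content of Potrie's work, so that in practice one simply cites \cite[Proposition~7.3]{Pot15}. In short: your route is strictly more detailed than the paper's, which opts for a bare citation; both ultimately defer the substance to~\cite{Pot15}.

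One small remark on your diagram chase: for the pair $(\tilde\cF^u_{\tilde f},\tilde\cF^{cs}_{\tilde f})$ you do not need bijectivity of $\tilde\phi$ on strong unstable leaves for \emph{existence} --- surjectivity on $u$-leaves plus the fact that fibres are center arcs (hence lie in $cs$-leaves) suffices, exactly as you wrote for the other pair. You use injectivity on $u$-leaves only for uniqueness, and you correctly flag that this comes from the internal $c$/$u$ product structure inside $cu$-leaves, which is again part of the Potrie package rather than a consequence of Proposition~\ref{p.coherent} alone.
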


As a direct corollary we have that:
\begin{corollary}\label{c.csproduct}
For any $x\in \TT^3$, $\cF^{cu}_f(x)=\bigcup_{y\in \cF^c_f(x)}\cF^u_f(y)$.
\end{corollary}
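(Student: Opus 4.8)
The plan is to prove the two inclusions of $\cF^{cu}_f(x)=\bigcup_{y\in \cF^c_f(x)}\cF^u_f(y)$ separately. The inclusion $\supseteq$ is formal: since $f$ is dynamically coherent (Proposition~\ref{p.coherent}) and $E^{c}\oplus E^{u}=E^{cu}$, both $\cF^c$ and $\cF^u$ subfoliate $\cF^{cu}$; hence for any $y\in\cF^c_f(x)\subseteq\cF^{cu}_f(x)$ we get $\cF^u_f(y)\subseteq\cF^{cu}_f(y)=\cF^{cu}_f(x)$.

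For $\subseteq$, fix $z\in\cF^{cu}_f(x)$; it suffices to find $y\in\cF^c_f(x)$ with $z\in\cF^u_f(y)$, i.e.\ to show that $\cF^u_f(z)$ meets $\cF^c_f(x)$. I would lift to the universal cover: pick $\tilde x$ above $x$ and $\tilde z$ above $z$ lying in $\tilde\cF^{cu}_{\tilde f}(\tilde x)$ (possible because $z$ is on the $cu$-leaf of $x$). By the global product structure of Proposition~\ref{p.product}, the set $\tilde\cF^u_{\tilde f}(\tilde z)\cap\tilde\cF^{cs}_{\tilde f}(\tilde x)$ is a single point $\tilde y$. Since $\cF^u$ subfoliates $\cF^{cu}$ we have $\tilde y\in\tilde\cF^u_{\tilde f}(\tilde z)\subseteq\tilde\cF^{cu}_{\tilde f}(\tilde z)=\tilde\cF^{cu}_{\tilde f}(\tilde x)$, while also $\tilde y\in\tilde\cF^{cs}_{\tilde f}(\tilde x)$; thus $\tilde y\in\tilde\cF^{cu}_{\tilde f}(\tilde x)\cap\tilde\cF^{cs}_{\tilde f}(\tilde x)$.

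The heart of the argument is then the identity $\tilde\cF^{cu}_{\tilde f}(\tilde x)\cap\tilde\cF^{cs}_{\tilde f}(\tilde x)=\tilde\cF^c_{\tilde f}(\tilde x)$. The inclusion $\supseteq$ is clear. For $\subseteq$, take $\tilde p$ in the left-hand side; inside the plane $\tilde\cF^{cs}_{\tilde f}(\tilde x)$ the transverse subfoliations $\tilde\cF^c$ and $\tilde\cF^s$ form a product structure (standard for dynamically coherent partially hyperbolic diffeomorphisms, cf.~\cite{Pot15}), so there is $\tilde q\in\tilde\cF^c_{\tilde f}(\tilde x)$ with $\tilde p\in\tilde\cF^s_{\tilde f}(\tilde q)$. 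Now $\tilde q\in\tilde\cF^c_{\tilde f}(\tilde x)\subseteq\tilde\cF^{cu}_{\tilde f}(\tilde x)$ and $\tilde p\in\tilde\cF^{cu}_{\tilde f}(\tilde x)$, so both $\tilde p$ and $\tilde q$ lie in $\tilde\cF^{cu}_{\tilde f}(\tilde x)\cap\tilde\cF^s_{\tilde f}(\tilde q)$; by the uniqueness in Proposition~\ref{p.product} this intersection is a single point, whence $\tilde p=\tilde q\in\tilde\cF^c_{\tilde f}(\tilde x)$. Consequently $\tilde y\in\tilde\cF^c_{\tilde f}(\tilde x)$ and $\tilde z\in\tilde\cF^u_{\tilde f}(\tilde y)$; pushing down by the covering projection (which carries lifted leaves to leaves) yields $y\in\cF^c_f(x)$ with $z\in\cF^u_f(y)$, as desired.

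The only real obstacle is the step $\tilde\cF^{cu}_{\tilde f}(\tilde x)\cap\tilde\cF^{cs}_{\tilde f}(\tilde x)=\tilde\cF^c_{\tilde f}(\tilde x)$: one must upgrade the ``coarse'' global product structure (between a $cu$- or $cs$-leaf and the opposite strong leaf) to this ``fine'' statement, which is where the uniqueness clause of Proposition~\ref{p.product}, the subfoliation relations, and the product structure inside $cs$-leaves all enter. Everything else is bookkeeping with leaves and the covering map.
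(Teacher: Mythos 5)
Your proof is correct, and since the paper states this result without proof as a ``direct corollary'' of Proposition~\ref{p.product}, your argument fills in precisely the expected details: dynamical coherence gives the easy inclusion $\supseteq$, and the global product structure (intersecting $\tilde\cF^u(\tilde z)$ with $\tilde\cF^{cs}(\tilde x)$) reduces the reverse inclusion to the identity $\tilde\cF^{cu}(\tilde x)\cap\tilde\cF^{cs}(\tilde x)=\tilde\cF^c(\tilde x)$, which you settle by a clean application of the uniqueness clause of Proposition~\ref{p.product} to the intersection $\tilde\cF^{cu}(\tilde x)\cap\tilde\cF^{s}(\tilde q)$. One point worth being explicit about: the product structure of $\tilde\cF^c$ and $\tilde\cF^s$ inside a lifted $cs$-leaf, which you invoke to produce $\tilde q$, is exactly the statement of this corollary for $f^{-1}$ (with $cs,c,s$ in place of $cu,c,u$). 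Citing~\cite{Pot15} for it is appropriate—Potrie's work on DA diffeomorphisms does establish this structure within the $cs$- and $cu$-leaves as part of the global product structure package—but you should acknowledge that as stated in the paper Proposition~\ref{p.product} only provides the two ``cross'' intersections ($u$-vs-$cs$ and $cu$-vs-$s$), so the within-leaf product structure is an additional input from Potrie rather than a formal consequence of Definition~\ref{d.productstructure} alone; without that extra input the two within-leaf product structures would bootstrap off each other.
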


Moreover, in Lemma \ref{l.uniformintersection} below we obtain a uniform control on the global product structure. A {\em fundamental domain} of the lift of $\TT^{3}$ to its universal cover is a closed set $B$ with $B = \Cl(\Int(B))$, such that its image under the covering map $\pi:\RR^{3}\to\TT^{3}$ is $\TT^{3}$, and such that $\pi$ is injective when restricted to the interior of $B$. 

For $R>0$, we define $\tilde \cF^\sigma_R(x)$ to be the $R$-ball inside the leaf $\tilde \cF^\sigma(x)$, $\sigma = s,c,u,cs,cu$.
\begin{lemma}\label{l.uniformintersection}
For any $f\in \cD^1(A)$, any $\tilde{x}\in \mathbb{R}^3$, and any fundamental domain $\tilde{\TT}^3$ of the lift
of $\TT^3$, there is $R_{\tilde{x}}>0$ such that for any $\tilde{y}\in \tilde{\TT}^3,$ $\tilde{\cF}^s(\tilde{y})\pitchfork \tilde{\cF}^{cu}_{R_{\tilde{x}}}(\tilde{x})\neq \emptyset$.
\end{lemma}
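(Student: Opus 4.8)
The plan is to exploit the global product structure from Proposition \ref{p.product} together with a compactness argument to upgrade the pointwise intersection into a uniform one. First I would fix $\tilde x\in\RR^3$ and a fundamental domain $\tilde\TT^3$, which is compact. By Proposition \ref{p.product}, for every $\tilde y\in\tilde\TT^3$ the leaf $\tilde\cF^s(\tilde y)$ meets $\tilde\cF^{cu}(\tilde x)$ in exactly one point $p(\tilde y)$; the key observation is that $p(\tilde y)$ lies at finite distance inside $\tilde\cF^{cu}(\tilde x)$, so there is $r_{\tilde y}>0$ with $\tilde\cF^s(\tilde y)\pitchfork\tilde\cF^{cu}_{r_{\tilde y}}(\tilde x)\neq\emptyset$ (the intersection is transverse because $E^s$ is transverse to $E^{cu}$ everywhere). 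The transversality is stable under $C^0$-perturbation of the base point, and the foliations $\tilde\cF^s$, $\tilde\cF^{cu}$ depend continuously on the point (they are continuous foliations with $C^1$ leaves, being the lifts of the invariant foliations of $f$, whose existence is guaranteed by Proposition \ref{p.coherent}). Hence for $\tilde y'$ in a small neighborhood of $\tilde y$ the intersection point $p(\tilde y')$ stays close to $p(\tilde y)$, in particular inside $\tilde\cF^{cu}_{2r_{\tilde y}}(\tilde x)$, and the intersection remains transverse.

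Next I would cover $\tilde\TT^3$ by finitely many such neighborhoods $U_1,\dots,U_N$, with associated radii $2r_1,\dots,2r_N$, and set $R_{\tilde x}=\max_i 2r_i$. Then for any $\tilde y\in\tilde\TT^3$, choosing $i$ with $\tilde y\in U_i$, we get $\tilde\cF^s(\tilde y)\pitchfork\tilde\cF^{cu}_{R_{\tilde x}}(\tilde x)\neq\emptyset$, which is exactly the claim. The role of the fundamental domain is precisely to provide the compact set over which this finite subcover is extracted; since translates of $\tilde\TT^3$ by deck transformations tile $\RR^3$, one could even phrase the statement uniformly in $\tilde y$ up to a deck transformation, but the stated form suffices.

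The only delicate point — and the one I would write out carefully — is the continuity of the intersection point $p(\tilde y)$ as a function of $\tilde y$, with a uniform bound on how far $p(\tilde y)$ can travel inside $\tilde\cF^{cu}(\tilde x)$. This is where one must use that the leaves $\tilde\cF^{cu}(\tilde x)$ and $\tilde\cF^s(\tilde y)$ are properly embedded planes (no accumulation), so that the unique intersection point cannot escape to infinity as $\tilde y$ varies in a compact set; equivalently, $p$ is a well-defined continuous map on $\RR^3$ by the uniqueness in Proposition \ref{p.product} and an open–closed argument, and continuous images of compact sets are bounded. I expect no serious obstacle beyond making this continuity/properness statement precise; everything else is a routine finite-subcover packaging.
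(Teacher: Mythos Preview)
Your proposal is correct and follows essentially the same approach as the paper: use the global product structure (Proposition~\ref{p.product}) to get a pointwise intersection with some radius $R(\tilde y)$, invoke continuity of the foliations to extend this to a neighborhood of each $\tilde y$, and then extract a finite subcover of the compact fundamental domain and take the maximum radius. The paper's proof is slightly terser---it does not dwell on the continuity of the intersection map $p(\tilde y)$ as you do---but the underlying argument is identical.
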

\begin{proof}
By Proposition~\ref{p.product}, for any $\tilde{x},\tilde{y}\in\RR^{3}$, there exists $R(\tilde{y})> 0$ such that $\tilde{\cF}^s(\tilde{y})\pitchfork \tilde{\cF}^{cu}_{R(\tilde{y})}(\tilde{x})\neq \emptyset$.
Because both foliations $\tilde{\cF}^s$ and $\tilde{\cF}^{cu}$ vary continuously with respect to points, the previous intersection
still holds for points in a neighborhood $U_{\tilde{y}}$ of $\tilde{y}$. Since $\tilde{\TT}^3$ is compact, we may take a finite open cover $U_{i}$ $(i=1,\cdots, k)$ of $\tilde{\TT}^3$, and take $R_{\tilde{x}}=\max \{R_i\}_{i=1}^k$. The proof is complete.
\end{proof}

 \begin{remark}\label{r.uniformintersection}
By projecting on ${\TT}^3$, it trivially follows from the lemma above that for any $x\in {\TT}^3$,  there is $R_{x}>0$ such that for any $y\in\TT^3$, $\cF^s(y)\pitchfork \cF^{cu}_{R_{x}}(x)\neq \emptyset$.

\end{remark} 

\subsection{Measure-theoretical information} Given any diffeomorphism $f:M\to M$, let $\cM_{\inv}(f)$ be the set of $f$-invariant probability measures on $M$, and $\cM_{\erg}(f) \subset \cM_{\inv}(f)$ be the set of ergodic probability measures.\par
Let $\phi:\TT^{3}\to\TT^{3}$ be the semiconjugacy defined in Subsection \ref{subsection.dynamical.coherence}. 
Since $\phi$ is not injective in general, the map
$$\phi_*:\cM_{\inv}(f)\to \cM_\inv(A)$$
is usually not injective either. Surprisingly, it is proven in~\cite[Theorem 3.6]{VY2} that if one restricts $\phi_*$ to the set of ergodic measures with large entropy:
$$\phi_*:\{\mu\in \cM_{\erg}(f): h_\mu(f)>\log \lambda_{u}\}\to \{\nu\in\cM_\erg(A): h_\nu(A)>\log \lambda_{u}\},$$
then it is bijective.
Moreover, the following two properties were proved in \cite{VY2}, showing that the constant $\log \lambda_{u}$
is important to classify ergodic measures with large entropy. We should note that, as explained in \cite{VY2},
the constant $\log \lambda_{u}$ is sharp here.

\begin{proposition}\cite[Theorem 3.6]{VY2} \label{p.isomophic}
Let $f\in\cD^1(A)$ and $\mu$ be an ergodic probability measure $\mu$ of $f$ with $h_\mu(f)> \log \lambda_{u}$.
Then for $\mu$ almost every $x$, $\phi^{-1}\circ \phi(x)=\{x\}$; that is,
$\phi$ is almost surely bijective on the support of ergodic measures with entropy larger than $\log \lambda_{u}$.
\end{proposition}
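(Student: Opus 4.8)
The plan is to argue by contradiction, transporting the question to the linear model $A$ and using that the Franks semiconjugacy $\phi$ collapses only along the center direction. Let $D=\{x\in\TT^3:\phi^{-1}(\phi(x))\text{ is a nondegenerate arc}\}$. By Proposition~\ref{p.coherent}(b) every fiber of $\phi$ is a point or a center segment of length $\le K$, so $D$ is Borel (the length of $\phi^{-1}(z)$ depends Borel-measurably on $z$), and from $f(\phi^{-1}(z))=\phi^{-1}(Az)$ we get $f(D)=D$; hence $\mu(D)\in\{0,1\}$ by ergodicity. Assume for contradiction that $\mu(D)=1$ and put $\nu:=\phi_*\mu\in\cM_{\erg}(A)$. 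By Corollary~\ref{c.preserceentropy}, $h_\nu(A)=h_\mu(f)>\log\lambda_{u}$; in particular $\nu$ has no atoms, since an $A$-invariant measure with an atom is supported on a periodic orbit and has zero entropy.

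The first substantive step is to show that the center exponent $\chi^c_\mu:=\int\log\|Tf|_{E^c}\|\,d\mu$ is positive; this step does not yet use $\mu(D)=1$, and amounts to saying that ergodic measures of $f$ with entropy above $\log\lambda_{u}$ are hyperbolic. The essential input is the bound $h_\mu(f,\cF^u)\le\log\lambda_{u}$, valid for every invariant measure of every DA diffeomorphism of $\TT^3$: this is precisely where the isotopy class of $A$ and the distinguished role of $\lambda_{u}$ enter, and it is the substance borrowed from \cite{VY2}. Granting it, the Ledrappier--Young formula (the stable bundle carries no entropy, and the one-dimensional center bundle adds at most its positive exponent) gives $h_\mu(f)\le h_\mu(f,\cF^u)+\max(\chi^c_\mu,0)\le\log\lambda_{u}+\max(\chi^c_\mu,0)$, so $h_\mu(f)>\log\lambda_{u}$ forces $\chi^c_\mu>0$.

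Now I would extract the contradiction. Since $\chi^c_\mu>0$ (and the unstable exponent is positive as well), Pesin theory provides, for $\mu$-almost every $x$, a local unstable manifold through $x$ contained in the topological leaf $\cF^{cu}_f(x)$; hence a nondegenerate neighborhood $\gamma_x$ of $x$ inside the center leaf $\cF^c_f(x)$ lies in it and is expanded by the positive iterates of $f$ at a uniform exponential rate over any Pesin block. Restricting $x$ to a positive-measure subset of $D$ on which $\length(\gamma_x)$ is bounded below and the distortion constants are controlled (a Pesin block intersected with a level set $\{\length(\phi^{-1}(\phi(\cdot)))\ge\ell_0\}$), we obtain $\length(f^n(\gamma_x))\to\infty$. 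But for $x\in D$ the arc $\gamma_x$ is contained in the fiber $\phi^{-1}(\phi(x))$, so $f^n(\gamma_x)\subset\phi^{-1}(\phi(f^n(x)))$, which by Proposition~\ref{p.coherent}(b) is a center segment of length $\le K$ for every $n$ --- impossible. Therefore $\mu(D)=0$, that is, $\phi^{-1}(\phi(x))=\{x\}$ for $\mu$-almost every $x$.

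I expect the real obstacle to be the partial-entropy bound $h_\mu(f,\cF^u)\le\log\lambda_{u}$: it is not formal. It records how the topology of $\TT^3$, through the Franks semiconjugacy together with the global product structure of Proposition~\ref{p.product}, forces every bit of entropy above $\log\lambda_{u}$ to be generated in the center direction, and it is the core of the classification of large-entropy measures in \cite{VY2}. By comparison the Pesin-theoretic last step is routine, modulo the usual care of choosing $\gamma_x$ inside the local unstable manifold (rather than an arbitrary piece of the fiber) and working over a single Pesin block.
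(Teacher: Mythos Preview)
The paper does not prove this proposition itself; it is quoted from \cite{VY2}. Judging from the other tools imported from \cite{VY2} (Propositions~\ref{p.noatom} and~\ref{p.centerlinear}) and from the very similar argument in Lemma~\ref{l.zeroinfty}, the intended proof works entirely on the linear side and avoids Pesin theory. One pushes $\mu$ forward to $\nu=\phi_*\mu$ with $h_\nu(A)=h_\mu(f)>\log\lambda_u$; Proposition~\ref{p.centerlinear} then gives $h_\nu(A,\cF^c_A)>0$, so by Proposition~\ref{p.noatom} the disintegration of $\nu$ along $\cF^c_A$ is non-atomic. But if $\mu(D)=1$ then $\nu$-almost every $z$ has $\phi^{-1}(z)$ a nondegenerate center arc; on each center leaf of $f$ such arcs are pairwise disjoint, hence at most countably many, so $\phi(D)$ meets each center leaf of $A$ in a countable set and the center conditionals of $\nu$ would be purely atomic --- contradiction. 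This needs no positive center exponent and no regularity beyond $C^1$.

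Your route is genuinely different and has two real gaps. First, the proposition is stated for $f\in\cD^1(A)$, while the Ledrappier--Young inequality $h_\mu(f)\le h_\mu(f,\cF^u)+\max(\chi^c_\mu,0)$ you invoke needs at least $C^{1+\alpha}$; indeed Theorem~\ref{thm.positive} (also from \cite{VY2}) only asserts $\chi^c_\mu\ge0$ in the $C^1$ setting, with strict positivity requiring $C^2$. If $\chi^c_\mu=0$ there is no two-dimensional Pesin unstable manifold and your contradiction step never begins. Second, the key sentence ``for $x\in D$ the arc $\gamma_x$ is contained in the fiber $\phi^{-1}(\phi(x))$'' is unjustified: you defined $\gamma_x$ as a center arc inside $W^u_{loc}(x)$, and the fiber is an \emph{a priori} unrelated center arc through $x$; bounding both lengths from below does not place one inside the other (you would also need $x$ to sit uniformly in the interior of its fiber, and the endpoint set is $f$-invariant and could in principle carry all of $\mu$). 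Even granting an inclusion, the growth $\length(f^n(\gamma_x))\to\infty$ does not follow formally from backward contraction on $W^u_{loc}$ --- the standard recurrence argument only yields a uniform positive lower bound along return times, not divergence past the fiber bound $K$.
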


In Proposition \ref{p.isomophic}, the support of the above mentioned ergodic measures could be {\em a priori} closed $f$-invariant sets that are not the whole manifold. Moreover, it is not known in general whether a DA diffeomorphism is necessarily transitive. 

\subsection{Partial entropy along an expanding foliation}\label{ss.2.3} Let $\mu$ be a probability measure and let $\xi$ be a measurable partition of $M$. Then for $\mu$-almost every $x\in M$ there exists a conditional probability measure $\mu_{x}^{\xi}$ such that for any measurable set $A$, the map $x\mapsto \mu_{x}^{\xi}$ is measurable and
$$\mu(A)=\int \mu_{x}^{\xi}(A)d\mu(x).$$
The collection of the conditional measures $\{\mu^\xi_x\}$ is called the {\em disintegration of $\mu$ with respect to $\xi$.}
See \cite{Rok49} for more details. The conditional measure $\mu^{\xi}_{x}$ actually depends only on the element of the partition $\xi(x)$ that contains $x$, so we may sometimes denote it by $\mu^{\xi}_{\xi(x)}$. \par
Let $C_{1},C_{2},\dots$ be the elements of the partition $\xi$ that have positive $\mu$-measure (if any). Then the {\em entropy of the partition $\xi$} is defined by
$$H_{\mu}(\xi)=-\sum_{n} \mu(C_{n})\log\mu(C_{n})$$
if $\mu(M\setminus\bigcup_{n} C_{n})=0$ or by $H_{\mu}(\xi)=\infty$ otherwise. \par
Given two measurable partitions $\xi$ and $\eta$, for every set $B\in\eta$ the partition $\xi$ induces a partition $\xi_{B}$ of $B$. The {\em mean conditional entropy of $\xi$ with respect to $\eta$} is defined by
$$H_{\mu}(\xi|\eta)=\int_{M/\eta} H_{\mu^{\eta}_{B}}(\xi_{B})d\mu_{\eta}(B),$$ 
where $M/\eta$ is the quotient of $M$ by the partition $\eta$, $\mu^{\eta}_{B}$ is the probability $\mu^{\eta}_{x}$ for any $x\in B$, and $\mu_{\eta}$ is the quotient measure. 

For two partition $\xi$ and $\eta$, we write $\xi\prec \eta$ if $\eta$ is finer than $\xi$, i.e., if every element of $\eta$ is contained in some element of $\xi$. We also denote by $\xi\vee\eta$ their join, i.e., the partition formed by taking the intersection between their elements. 
If $\{\eta_{n}\}_{n=1}^{\infty}$ is a sequence of measurable partitions, we will denote $\eta_{n}\uparrow\eta$ if
\begin{itemize}
\item $\eta_{1}\prec\eta_{2}\prec\dots$, 
\item $\bigvee_{n=1}^{\infty}\eta_{n}=\eta$. 
\end{itemize}
If $\xi$ and $\eta_{n}$ are measurable partitions for $n=1,2,\dots$ such that $\eta_{n}\uparrow\eta$ and $H_{\mu}(\xi|\eta_{1})<\infty$, then 
$$H_{\mu}(\xi|\eta_{n})\downarrow H_{\mu}(\xi|\eta).$$
See for instance \cite[Subsection 5.11]{Rok67}. Given any measurable partition $\xi$, we define
$$h_{\mu}(f,\xi)=H_{\mu}(\xi\mid \bigvee_{n=1}^{\infty}f^{n}\xi)$$
 A measurable partition $\xi$ is {\em increasing} if $f\xi\prec\xi$. If a partition $\xi$ is increasing, then 
$$h_{\mu}(f,\xi)=H_{\mu}(\xi|f\xi).$$

An $f$-invariant foliation $\cF$ is {\em expanding} if the derivative of $f$ along $\cF$ is uniformly expanding. Given an invariant probability measure $\mu$ and an invariant expanding foliation $\cF$ of $f$, a measurable partition $\xi$ is {\em $\mu$-subordinate} to the foliation $\cF$ if for $\mu$-almost every $x$ it satisfies
\begin{enumerate}
 \item $\xi(x)\subset\cF(x)$ and $\xi(x)$ has a uniformly bounded from above diameter inside $\cF(x)$, 
 \item $\xi(x)$ contains an open neighborhood of $x$ inside the leaf $\cF(x)$, 
 \item $\xi$ is an increasing partition.
\end{enumerate}

Such a partition always exists if $\cF$ is expanding, see \cite{LS82} and \cite{Y0}.

Given an invariant probability $\mu$, an invariant expanding foliation $\cF$, and any $\mu$-subordinate measurable partition $\xi$, we define the \emph{partial entropy of $f$ along $\cF$}, which we denote by $h_\mu(f,\cF)$, as:
$$
h_\mu(f,\cF) = h_\mu(f,\xi).
$$
It is well known that the definition above does not depend on the choice of the partition $\xi$, see for example~\cite{LY85a}.

\begin{proposition}\cite[Proposition 2.7]{VY2}\label{p.noatom}
	Let $\cF$ be an expanding foliation, $\mu $ be an ergodic probability measure, and $\{\mu_x : x \in M\}$ be the disintegration of $\mu$ with respect to any measurable partition $\xi$ that is $\mu$-subordinate to $\cF$. Then the following conditions are equivalent:\begin{enumerate}
		\item $h_\mu(f, \cF) > 0;$
		\item for $\mu$-almost every point $x$, the measure $\mu_x$ is continuous; that is, it has no atoms.
	\end{enumerate}
	Moreover,   if {\rm (1)} (or {\rm (2)}) is satisfied, any full $\mu$-measure subset $Z$ intersects almost every leaf of $\cF$ at an uncountable set.

\end{proposition}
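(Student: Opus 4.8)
The plan is to work directly with the disintegration $\{\mu_x^\xi\}$ of $\mu$ along $\xi$ and with the size of the atom at the base point, $g(x):=\mu_x^\xi(\{x\})$, using that $\xi$ is increasing and subordinate to $\cF$. First I would record two standard facts about this disintegration (see \cite{Rok67}). Since $f$ maps the partition $\xi$ onto $f\xi$ and preserves $\mu$, the disintegration is natural: for $\mu$-a.e.\ $x$ one has $\mu_{f(x)}^{f\xi}=f_*\mu_x^\xi$. And since $\xi\succ f\xi$, for $\mu$-a.e.\ $x$ the coarser conditional refines on its atom $B=f\xi(f(x))$ as $\mu_{f(x)}^{f\xi}=\int_{B/\xi}\mu_C^\xi\,d\bar\mu_B(C)$, where $\bar\mu_B$ is the image of $\mu_B^{f\xi}$ under the quotient map $B\to B/\xi$. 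Evaluating both descriptions of $\mu_{f(x)}^{f\xi}$ at the single point $f(x)$ — where only the $\xi$-atom $\xi(f(x))$ can carry mass — yields the relation
\[
g(x)=\theta(x)\,g(f(x)),\qquad \theta(x):=\mu^{f\xi}_{f\xi(f(x))}\bigl(\xi(f(x))\bigr)\in[0,1].
\]
In particular $g\circ f\ge g$; since $\mu$ is $f$-invariant both sides have the same integral, so $g\circ f=g$ $\mu$-a.e., and ergodicity forces $g\equiv c$ for a constant $c\in[0,1]$.

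Now I would extract the dichotomy, recalling that $h_\mu(f,\cF)=H_\mu(\xi\mid f\xi)$ because $\xi$ is increasing. If $c>0$, the relation above forces $\theta\equiv1$ $\mu$-a.e., i.e.\ for $\mu$-a.e.\ $x$ the conditional measure of the $f\xi$-atom through $x$ is carried by a single $\xi$-atom; equivalently $\xi=f\xi$ mod $\mu$ (as $f\xi\prec\xi$ always), whence $h_\mu(f,\cF)=0$. Conversely, if $h_\mu(f,\cF)=0$ then $\xi=f\xi$ mod $\mu$, hence $f^{-n}\xi=\xi$ mod $\mu$ for all $n\ge0$; but since $f$ expands $\cF$ uniformly and the $\xi$-atoms have uniformly bounded diameter, the diameter of $f^{-n}\xi(x)$ inside $\cF(x)$ tends to $0$, so $\bigvee_{n\ge0}f^{-n}\xi$ is the partition into points and therefore $\mu_x^\xi=\delta_x$ for $\mu$-a.e.\ $x$; in particular $g\equiv1$, so $c=1$. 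This proves $c>0\iff h_\mu(f,\cF)=0$, and in that case $\mu_x^\xi$ is purely atomic a.e. To finish, I would rule out atoms when $c=0$ via the bookkeeping identity
\[
\mu(A)=\int a(x)\,d\mu(x),\qquad A:=\{x:g(x)>0\},
\]
where $a(x)$ denotes the total mass of the atomic part of $\mu_x^\xi$. This holds because on a full-measure set $X_0$ the conditionals depend only on the $\xi$-class, so for $\mu$-a.e.\ $x$ every atom of $\mu_x^\xi$ lies in $X_0$ and, modulo $\mu_x^\xi$-null sets, $A\cap\xi(x)$ is exactly the atom set of $\mu_x^\xi$; hence $\mu_x^\xi(A)=a(x)$, and integrating gives the identity. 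When $c=0$ we get $\mu(A)=\mu(g>0)=0$, so $a\equiv0$ and $\mu_x^\xi$ has no atoms for $\mu$-a.e.\ $x$. Since $c=0$ is the negation of $c>0\iff h_\mu(f,\cF)=0$, this establishes the equivalence $(1)\Leftrightarrow(2)$.

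For the last assertion, assume $h_\mu(f,\cF)>0$, so $\mu_x^\xi$ is non-atomic for $\mu$-a.e.\ $x$. If $Z\subset M$ has full $\mu$-measure, then disintegrating along $\xi$ gives $\mu_x^\xi(Z)=1$ for $\mu$-a.e.\ $x$; since a non-atomic probability measure assigns zero mass to every countable set, $Z\cap\xi(x)$ must be uncountable, and as $\xi(x)\subset\cF(x)$ the set $Z\cap\cF(x)$ is uncountable for $\mu$-a.e.\ $x$. The collection of such $x$ is $\cF$-saturated and of full $\mu$-measure, hence meets $\mu$-almost every leaf of $\cF$, which is the claim.

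The main obstacle is not conceptual but one of measure-theoretic hygiene: setting up the transformation rule $\mu_{f(x)}^{f\xi}=f_*\mu_x^\xi$ and the refinement formula on one common full-measure set, checking that $g$ and the atomic-part mass $a$ are measurable, and — crucially — that the atoms of $\mu_x^\xi$ may be taken to lie in the full-measure set on which the conditional measures are honest functions of the $\xi$-class, so that the identity $\mu(A)=\int a\,d\mu$ is legitimate. Once this is in place, the cocycle relation $g=\theta\cdot(g\circ f)$ together with ergodicity carries the argument.
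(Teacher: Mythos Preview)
The paper does not prove this proposition; it is quoted verbatim from \cite[Proposition~2.7]{VY2} and used as a black box, so there is no in-paper argument to compare against. Your proof is correct and is essentially the standard route to this result: the key cocycle identity $g=\theta\cdot(g\circ f)$ coming from the compatibility of the disintegrations along $\xi$ and $f\xi$, the ergodicity argument forcing $g$ to be constant, and the dichotomy $c>0\iff H_\mu(\xi\mid f\xi)=0$ obtained by iterating $\xi=f\xi$ down to points via the uniform expansion of $\cF$. The passage from $c=0$ to ``no atoms whatsoever'' via the identity $\mu(\{g>0\})=\int a\,d\mu$ is also the usual trick, and your justification (atoms of $\mu_x^\xi$ must lie in the full-measure set where conditionals depend only on the $\xi$-class, because $\mu_x^\xi(X_0)=1$) is the right one. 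The measurability of $g$ and $a$ that you flag as ``hygiene'' is genuinely routine here, so nothing is missing.
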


The definition of expanding foliation does not require the foliation to be the strong unstable foliation. A particularly important case is the center foliation $\mathcal F^c_A$ of the linear Anosov diffeomorphism $A$. This foliation is an expanding foliation, which means that its partial entropy is well defined. We will need the following proposition, which is a direct consequence of \cite[Proposition 2.8]{VY2}. 

\begin{proposition}\cite[Proposition 2.8]{VY2}\label{p.centerlinear}
Let $A$ be a linear Anosov diffeomorphism of $\TT^3$ with eigenvalues
$0<\lambda_{s}<1<\lambda_{c}<\lambda_{u}$ and $\nu$ an $A$-invariant measure with $h_\nu(A)>\log \lambda_{u}$. Then
$$h_\nu(A,\cF^c_A)\geq h_{\nu}(A)-\log \lambda_{u}>0.$$

\end{proposition}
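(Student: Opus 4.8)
\textbf{Proof proposal for Proposition~\ref{p.centerlinear}.}

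The plan is to reduce the statement to the Ledrappier--Young entropy formula for the linear Anosov diffeomorphism $A$, exploiting that for a linear map everything is explicit. First I would recall that $A$ is a linear Anosov diffeomorphism of $\TT^3$ with simple real spectrum $0<\lambda_s<1<\lambda_c<\lambda_u$, so its stable bundle is one-dimensional with Lyapunov exponent $\log\lambda_s<0$, and its center-unstable bundle splits into the center direction (exponent $\log\lambda_c>0$) and the strong unstable direction (exponent $\log\lambda_u>0$). All three foliations $\cF^s_A$, $\cF^c_A$, $\cF^u_A$ are linear, hence smooth, and the unstable foliation is $\cF^{cu}_A$, tangent to $E^c_A\oplus E^u_A$. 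Since $A$ is $C^\infty$ and all exponents are constant, the Ledrappier--Young formula (in the version of \cite{LY85a}) applies to any ergodic $A$-invariant measure $\nu$ and gives
$$
h_\nu(A)=h_\nu(A,\cF^{cu}_A),
$$
because the sum of positive exponents equals $\log\lambda_c+\log\lambda_u$ and the entropy along the full unstable foliation captures all of it. The key additional input is the way partial entropy decomposes when the expanding foliation $\cF^{cu}_A$ is refined by the sub-foliation $\cF^c_A$: one has $h_\nu(A,\cF^c_A)\le h_\nu(A,\cF^{cu}_A)$ always, and the defect is controlled by the expansion in the complementary (strong unstable) direction, which for the linear map is exactly $\log\lambda_u$.

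Concretely, I would argue as follows. Fix a measurable partition $\xi$ subordinate to $\cF^{cu}_A$ and a finer measurable partition $\eta$ subordinate to $\cF^c_A$ with $\eta\prec\xi$ (one can build $\eta$ by intersecting the plaques of $\xi$ with center plaques; the $C^1$ sub-foliation structure makes this legitimate, cf.\ the construction in \cite{LS82,Y0}). Both $\xi$ and $\eta$ can be taken increasing. Then $h_\nu(A,\cF^{cu}_A)=H_\nu(\xi\mid A\xi)$ and $h_\nu(A,\cF^c_A)=H_\nu(\eta\mid A\eta)$. Using the identity $H_\nu(\eta\mid A\eta)=H_\nu(\xi\vee\eta\mid A\xi\vee A\eta)$ together with the cocycle-type relation for conditional entropies, the difference $h_\nu(A,\cF^{cu}_A)-h_\nu(A,\cF^c_A)$ equals the entropy contribution of the strong unstable direction transverse to $\cF^c_A$ inside $\cF^{cu}_A$. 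By the transverse dimension being $1$ and the linear expansion rate being $\log\lambda_u$, this contribution is at most $\log\lambda_u$; in fact equality holds because $A$ is linear and the measure $\nu$ sees the full strong unstable expansion. Hence
$$
h_\nu(A,\cF^c_A)=h_\nu(A,\cF^{cu}_A)-\log\lambda_u=h_\nu(A)-\log\lambda_u,
$$
and since $h_\nu(A)>\log\lambda_u$ by hypothesis, the right-hand side is strictly positive. For a general (not necessarily ergodic) $\nu$ with $h_\nu(A)>\log\lambda_u$, I would apply the ergodic decomposition: partial entropy along an expanding foliation and metric entropy are both affine in $\nu$, so integrating the inequality $h_{\nu_\omega}(A,\cF^c_A)\ge h_{\nu_\omega}(A)-\log\lambda_u$ over the ergodic components $\nu_\omega$ preserves it.

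The step I expect to be the main obstacle is the precise justification of the decomposition $h_\nu(A,\cF^{cu}_A)=h_\nu(A,\cF^c_A)+\log\lambda_u$ — i.e.\ that the transverse strong-unstable direction contributes \emph{exactly} $\log\lambda_u$ and not merely at most that. This is where the linearity of $A$ (equivalently, the fact that $\cF^c_A$, $\cF^u_A$ are affine foliations and the Jacobians along them are constant) is essential; it lets one invoke the Ledrappier--Young entropy formula dimension-by-dimension and identify each transverse factor of the entropy with the corresponding Lyapunov exponent. Since the present excerpt states this proposition as a direct consequence of \cite[Proposition~2.8]{VY2}, in the paper itself one simply cites that reference; the argument sketched above is the content behind it. The remaining points — existence of subordinate partitions, affineness of partial entropy, independence of $h_\nu(A,\cF)$ from the chosen subordinate partition — are all standard and quoted earlier in the excerpt.
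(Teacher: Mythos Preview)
The paper does not prove this proposition; it simply invokes \cite[Proposition~2.8]{VY2}, and you correctly note this. Your sketch is therefore supplying content beyond what the paper contains. The overall strategy is the right one and matches what lies behind the cited result: use $h_\nu(A)=h_\nu(A,\cF^{cu}_A)$ (full metric entropy equals entropy along the Pesin unstable foliation, Ledrappier--Young), and then bound the gap $h_\nu(A,\cF^{cu}_A)-h_\nu(A,\cF^c_A)$ from above by the transverse expansion rate $\log\lambda_u$. That inequality plus the hypothesis $h_\nu(A)>\log\lambda_u$ gives exactly the stated conclusion, and the passage to non-ergodic $\nu$ via affineness of partial entropy is fine.

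Two points to fix. First, the partition relation is written backwards: with the paper's convention ``$\xi\prec\eta$ means $\eta$ is finer,'' a partition $\eta$ subordinate to $\cF^c_A$ obtained by slicing the $cu$-plaques of $\xi$ satisfies $\xi\prec\eta$, not $\eta\prec\xi$. Second, and more substantively, your assertion that the transverse contribution equals \emph{exactly} $\log\lambda_u$ (``the measure $\nu$ sees the full strong unstable expansion'') is not justified: linearity of $A$ does not force a general invariant measure with $h_\nu(A)>\log\lambda_u$ to have leaf-Lebesgue conditionals along $\cF^u_A$. The proposition only asserts the inequality $h_\nu(A,\cF^c_A)\ge h_\nu(A)-\log\lambda_u$, for which your ``at most $\log\lambda_u$'' already suffices; drop the equality claim. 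Finally, be aware that the direct partition manipulation you gesture at is delicate because $H_\nu(\eta\mid\xi)=+\infty$ when each $\xi$-atom is sliced into uncountably many center arcs; the ``cocycle-type relation'' must be implemented through finite refinements or the quotient argument underlying \cite[Proposition~2.8]{VY2}, not by a one-line identity.
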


%

\section{Diffeomorphisms with mostly expanding center\label{s.theoryofME}}
Our proof depends heavily on measure-theoretical arguments. In this section we are
going to precisely state the properties of $G^u(f), G^{cu}(f) $ and $G(f)$. We will also collect some basic background on diffeomorphisms with mostly expanding center, introduced in \cite{ABV,Y0,Y1}.


In this section $f$ will be a $C^1$ partially hyperbolic diffeomorphism unless otherwise specified. It is well-known that there is an invariant foliation $\cF^{u}$ tangent to $E^{u}$ which is expanding, see for instance \cite{BP, HPS77}. 


\subsection{$G^u$ states} 
First, recall that for a $C^{2}$ partially hyperbolic diffeomorphism, following the work of Pesin and Sinai~\cite{PS82}, a \emph{Gibbs $u$-state} is an invariant probability
measure whose conditional probabilities (in the sense of Rokhlin~\cite{Rok49}) along strong unstable leaves
are absolutely continuous with respect to the Lebesgue measure on the leaves. 
The set of Gibbs $u$-states plays an important role in the study of physical measures for $C^2$
partially hyperbolic diffeomorphisms. More properties for Gibbs $u$-states can be found in
\cite[Subsection 11.2]{BDVnonuni}, see also \cite{D2, D}.

Below we are going to define a natural generalization of Gibbs $u$-states for $C^1$ partially hyperbolic
diffeomorphisms.

\begin{definition}\label{df.Gu}
We define:
\begin{equation}\label{eq.Gu}
\G^u(f)=\{\mu\in \cM_{inv}(f): h_\mu(f,\cF^u)\geq \int \log(\det(Tf\mid_{E^u(x)}))d\mu(x)\}.
\end{equation}
\end{definition}

\begin{remark}\label{rk.Gu}
\begin{itemize}
\item[(a)] When $f$ is $C^{2}$, by Ledrappier~\cite{L84}, $\G^u(f)$ is the set of  Gibbs $u$-states of $f$.
\item[(b)] By the Ruelle's inequality for partial entropy (see for instance \cite{WWZ}), one can replace the inequality in the definition of $\G^u$ by equality:
$$\G^u(f)=\{\mu\in \cM_{inv}(f): h_\mu(f,\cF^u)= \int \log(\det(Tf\mid_{E^u(x)}))d\mu(x)\}.$$
\end{itemize}
\end{remark}

The following property that was already known  for $\Gibb^u(f)$, also holds for $\G^u(f)$.
\medskip

\begin{proposition}~\cite[Propositions 3.1, 3.5]{HYY}\label{p.Gu}
For any $C^1$ partially hyperbolic diffeomorphism $f$, $\G^u(f)$ is non-empty, convex, compact, and varies in an upper semi-continuous way with respect to the partially
hyperbolic diffeomorphisms endowed with the $C^1$ topology. Moreover, for any invariant measure $\mu\in \G^u(f)$, almost every ergodic component of
its ergodic decomposition  belongs to $\G^u(f)$.
\end{proposition}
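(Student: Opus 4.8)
## Proof Proposal for Proposition \ref{p.Gu}

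The plan is to establish the four properties of $\G^u(f)$ — non-emptiness, convexity, compactness, and upper semi-continuity under $C^1$ perturbation — together with the ergodic decomposition statement, by exploiting the affine-in-$\mu$ nature of both sides of the defining inequality in \eqref{eq.Gu} and the semi-continuity properties of partial entropy along the expanding foliation $\cF^u$. The key structural observation is that the potential $\psi(x) = \log\det(Tf|_{E^u(x)})$ is continuous (since $f$ is $C^1$ and $E^u$ varies continuously), so $\mu \mapsto \int\psi\,d\mu$ is affine and weak-$*$ continuous; meanwhile $\mu\mapsto h_\mu(f,\cF^u)$ is affine (on the set where it is defined via a fixed $\mu$-subordinate partition, one uses the standard fact that partial entropy behaves additively under ergodic decomposition, as for ordinary entropy) and, crucially, \emph{upper semi-continuous} in $\mu$ along partially hyperbolic diffeomorphisms converging in $C^1$. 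This last point is the technical heart and I would quote it from the partial-entropy semicontinuity results already available in the literature cited (the Ledrappier--Young machinery together with the continuity of $E^u$ and uniform expansion along $\cF^u$), as encapsulated in \cite{HYY}.

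The steps, in order, would be: (1) \textbf{Non-emptiness.} Take any point $x$ whose forward Birkhoff averages of Dirac masses have a weak-$*$ limit point $\mu$; better, push forward normalized Lebesgue measure restricted to an unstable plaque and take a Cesàro average, then pass to a weak-$*$ limit $\mu$. Arguing as in the classical construction of Gibbs $u$-states (Pesin--Sinai), the limit measure has conditionals along $\cF^u$ that are not too singular, and a semicontinuity-of-entropy argument gives $h_\mu(f,\cF^u)\ge \int\psi\,d\mu$; hence $\mu\in\G^u(f)$. (2) \textbf{Convexity.} Immediate from affinity of both $\mu\mapsto h_\mu(f,\cF^u)$ and $\mu\mapsto\int\psi\,d\mu$: if $\mu_1,\mu_2\in\G^u(f)$ and $t\in[0,1]$ then $h_{t\mu_1+(1-t)\mu_2}(f,\cF^u) = t\,h_{\mu_1}(f,\cF^u)+(1-t)\,h_{\mu_2}(f,\cF^u) \ge t\int\psi\,d\mu_1+(1-t)\int\psi\,d\mu_2 = \int\psi\,d(t\mu_1+(1-t)\mu_2)$. (3) \textbf{Compactness.} $\G^u(f)$ is a subset of the weak-$*$ compact set $\cM_{\inv}(f)$, so it suffices to show it is closed; if $\mu_n\to\mu$ with $\mu_n\in\G^u(f)$, then $\int\psi\,d\mu_n\to\int\psi\,d\mu$ by continuity of $\psi$, while $\limsup_n h_{\mu_n}(f,\cF^u)\le h_\mu(f,\cF^u)$ by upper semicontinuity of partial entropy (even at fixed $f$), so $h_\mu(f,\cF^u)\ge \limsup_n\int\psi\,d\mu_n = \int\psi\,d\mu$, i.e. $\mu\in\G^u(f)$. (4) \textbf{Upper semicontinuity in $f$.} If $f_n\xrightarrow{C^1}f$ and $\mu_n\in\G^u(f_n)$ with $\mu_n\to\mu$, then $\mu$ is $f$-invariant; the potentials $\psi_{f_n}(x)=\log\det(Tf_n|_{E^u_{f_n}(x)})$ converge uniformly to $\psi_f$ because $Tf_n\to Tf$ in $C^0$ and $E^u_{f_n}\to E^u_f$ uniformly (continuous dependence of the strong unstable bundle on the $C^1$ diffeomorphism), so $\int\psi_{f_n}\,d\mu_n\to\int\psi_f\,d\mu$; combined with the $C^1$ upper semicontinuity of partial entropy $\limsup_n h_{\mu_n}(f_n,\cF^u_{f_n})\le h_\mu(f,\cF^u_f)$, we conclude $\mu\in\G^u(f)$. (5) \textbf{Ergodic decomposition.} Write $\mu=\int \mu_\omega\,d\omega$ for its ergodic decomposition. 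Both sides of \eqref{eq.Gu} are affine and measurable in $\omega$; by Ruelle's inequality for partial entropy (Remark \ref{rk.Gu}(b)) we always have $h_{\mu_\omega}(f,\cF^u)\le \int\psi\,d\mu_\omega$ for a.e. $\omega$. Integrating, $\int h_{\mu_\omega}(f,\cF^u)\,d\omega \le \int\big(\int\psi\,d\mu_\omega\big)\,d\omega = \int\psi\,d\mu \le h_\mu(f,\cF^u) = \int h_{\mu_\omega}(f,\cF^u)\,d\omega$, where the last equality is affinity of partial entropy. Hence equality holds in the integrand for a.e. $\omega$: $h_{\mu_\omega}(f,\cF^u)=\int\psi\,d\mu_\omega$, so $\mu_\omega\in\G^u(f)$ for a.e. $\omega$.

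The main obstacle is Step (4) — the upper semicontinuity of the partial entropy $h_\mu(f,\cF^u)$ \emph{jointly} in the measure and in the $C^1$ perturbation of $f$. Ordinary metric entropy is \emph{not} upper semicontinuous in general in the $C^1$ topology, so one cannot treat $h_\mu(f,\cF^u)$ cavalierly; what saves the argument is that $\cF^u$ is \emph{uniformly} expanding with a spectral gap that persists under $C^1$ perturbation, which allows one to build $\mu$-subordinate partitions with uniformly controlled geometry and to run the Ledrappier--Young estimates with constants uniform over a $C^1$-neighborhood. I would isolate this as the cited input from \cite{HYY} (Propositions 3.1 and 3.5 there) rather than reprove it, and note that the analogous statement for $\Gibb^u(f)$ in the $C^2$ category is classical (Bonatti--Díaz--Viana). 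The affinity of partial entropy under ergodic decomposition, needed in Step (5), is also standard but worth a one-line justification via the fact that a single measurable partition $\xi$ can be chosen $\mu$-subordinate and simultaneously $\mu_\omega$-subordinate for a.e. $\omega$, reducing the claim to the affinity of $H_\mu(\xi\mid f\xi)$ in $\mu$.
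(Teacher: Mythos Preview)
The paper does not give its own proof of this proposition: it is stated with the citation \cite[Propositions~3.1, 3.5]{HYY} and used as a black box. So there is no ``paper's proof'' to compare against; your proposal is an independent sketch of the argument that \cite{HYY} presumably contains.

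That said, your outline is correct and captures the standard route. The decisive ingredients are exactly the ones you single out: (i) continuity of $\psi_f(x)=\log\det(Tf|_{E^u(x)})$ and uniform convergence $\psi_{f_n}\to\psi_f$ under $C^1$ convergence of partially hyperbolic diffeomorphisms; (ii) affinity of $\mu\mapsto h_\mu(f,\cF^u)$ under ergodic decomposition; and (iii) joint upper semicontinuity of $(\mu,f)\mapsto h_\mu(f,\cF^u)$ in the $C^1$ topology. You are right that (iii) is the only nontrivial input and that it relies on the \emph{uniform} expansion along $\cF^u$, which is what makes this work in $C^1$ regularity where ordinary metric entropy fails to be upper semicontinuous; this is indeed the content of \cite{HYY} (see also \cite{Y0}). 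Your Step~(5) argument---combining Ruelle's inequality for partial entropy with affinity to force equality $\mu_\omega$-almost surely---is clean and correct. One small remark: in Step~(1) you could alternatively invoke Proposition~\ref{p.physical} (or \cite[Theorem~A]{HYY}) directly, since it already asserts that Ces\`aro limits at Lebesgue-typical points land in $\G(f)\subset\G^u(f)$, bypassing the Pesin--Sinai construction.
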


\subsection{Other invariant measure subspaces\label{ss.candidates}}

\begin{definition}\label{df.G}
 Let $E^{cu}=E^{c}\oplus E^{u}$. Define
 $$
\G^{cu}(f)=\{\mu\in \cM_{\inv}(f): h_\mu(f)\geq \int \log(\det(Tf\mid_{E^{cu}(x)}))d\mu(x)\}
.$$

\end{definition}

Note that $\G^{cu}(f)$ is defined similarly to $\G^u(f)$, but using the metric entropy $h_\mu(f)$ instead of the partial entropy. Also note that measures in $\G^{cu}(f)$ may have negative center exponent. In fact, if $\mu\in\G^u(f)$ has only negative center exponents, then 
$$h_{\mu}(f)\geq h_{\mu}(f,\cF^{u})\geq \int \log(\det(Tf\mid_{E^u(x)}))d\mu(x)\geq \int \log(\det(Tf\mid_{E^{cu}(x)}))d\mu(x),$$ 
and therefore
$\mu$ must belong to $\G^{cu}(f)$.

Finally, we denote
\begin{equation}\label{eq.G(f)}
\G(f)=\G^{u}(f)\cap \G^{cu}(f). 
\end{equation}

The most important property of the space $G(f)$ is that it contains all physical measures of $f$. 
An invariant probability
measure $\mu$ of $f$ is a {\em physical measure} if its basin
$\cB(\mu)=\{x; \lim_{n\to \infty}\frac{1}{n}\sum_{i=0}^{n-1}\delta_{f^i(x)}=\mu\}$
has positive volume. The limit in the definition of the set $\cB(\mu)$ corresponds to the weak$^*$ topology. The measure $\delta_{x}$ denotes the Dirac probability measure supported at $x$.

In the proposition below it is shown that $G(f)$ is always non-empty and that the space $\G(f)$ contains all the physical measures. However the space of physical measures could be empty in general. 

\begin{proposition}\cite[Theorem A]{HYY}\cite[Proposition 2.12]{Y0}\label{p.physical} Let $f$ be a $C^{1}$ partially hyperbolic diffeomorphism. Then 
there is a full volume subset $\Gamma$ such that for any $x\in\Gamma$, any limit of the
sequence $\frac{1}{n}\sum_{i=0}^{n-1}\delta_{f^i(x)}$ belongs to $\G(f)$.
\end{proposition}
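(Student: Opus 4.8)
\textbf{Proof proposal for Proposition~\ref{p.physical}.}

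The plan is to reduce everything to two separate membership statements, one for $\G^u(f)$ and one for $\G^{cu}(f)$, and then intersect. By Proposition~\ref{p.Gu} the set $\G^u(f)$ already has the property that its Ces\`aro limits at Lebesgue-a.e. point land inside it (this is the well-known statement for Gibbs $u$-states in the $C^2$ case, extended to the $C^1$ case in \cite{HYY}); so the only genuinely new work is to establish the analogous statement for $\G^{cu}(f)$, namely that there is a full-volume set $\Gamma^{cu}$ such that every limit of $\frac1n\sum_{i=0}^{n-1}\delta_{f^i(x)}$, $x\in\Gamma^{cu}$, satisfies the $cu$-inequality $h_\nu(f)\ge\int\log\det(Tf\mid_{E^{cu}})\,d\nu$. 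Then I would take $\Gamma=\Gamma^u\cap\Gamma^{cu}$, which is still full volume, and the result follows.

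First I would recall the standard disintegration/absolute-continuity machinery along $\cF^u$: since $\cF^u$ is an expanding foliation, pick a measurable partition $\xi$ subordinate to $\cF^u$, and observe that for Lebesgue-a.e.\ $x$ the empirical measures $\mu_n^x=\frac1n\sum_{i=0}^{n-1}\delta_{f^i(x)}$ have, in the limit, conditionals along $\cF^u$ that are absolutely continuous (this is the classical argument of Pesin--Sinai in the $C^2$ case; in the $C^1$ setting one uses the entropy characterization of $\G^u(f)$ from \cite{HYY} instead, and the semi-continuity in Proposition~\ref{p.Gu}). This puts the limit measures in $\G^u(f)$, handling the first factor. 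For the second factor, the key input is the Ruelle inequality applied along $E^{cu}$ together with an estimate on the growth of volume of $cu$-disks along typical forward orbits: one shows that for Lebesgue-a.e.\ $x$ the quantity $\frac1n\log\Jac(Tf^n\mid_{E^{cu}})(x)$ is asymptotically controlled from below by the metric entropy of any weak-$*$ limit of $\mu_n^x$. Concretely, I would take a $cu$-disk $D$ through $x$ of small radius, note that $\int_D \frac1n\log\Jac(Tf^n\mid_{E^{cu}})\,d\Leb$ equals $\frac1n$ times the log of the ratio of volumes $\Leb(f^n D)/\Leb(D)$, which is bounded (the disk cannot grow faster than exponentially and, being inside $\TT^3$, is volume-bounded); pushing the normalized Lebesgue measure on $D$ forward and disintegrating, together with Fatou/Rokhlin's formula for entropy, yields $h_\nu(f)\ge\int\log\det(Tf\mid_{E^{cu}})\,d\nu$ for any limit $\nu$.

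The main obstacle I expect is the $C^1$ (rather than $C^2$) regularity: the classical Pesin--Sinai construction of Gibbs $u$-states, and the distortion estimates that make the Jacobian-of-$f^n$ bounded on a $u$-disk, genuinely use bounded distortion along unstable leaves, which need not hold in class $C^1$. The way around this is precisely the content of \cite{HYY}: replace "absolutely continuous conditionals" by the variational/entropy inequality $h_\mu(f,\cF^u)\ge\int\log\det(Tf\mid_{E^u})\,d\mu$, which is stable under $C^1$ perturbation and $C^1$-upper-semicontinuous, and prove the full-volume statement directly at the level of partial entropies using a Shannon--McMillan--Breiman argument along the subordinate partition $\xi$. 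So in the write-up I would quote Proposition~\ref{p.Gu} (and the relevant statements in \cite{HYY, Y0}) for the $\G^u$ part, then carry out the volume-growth/Ruelle argument above for the $\G^{cu}$ part, and finally intersect the two full-volume sets; the non-emptiness of $\G(f)$ is then automatic since a full-volume set is in particular non-empty and $f$-orbits of its points have weak-$*$ limit points by compactness of $\cM_{\inv}(f)$.
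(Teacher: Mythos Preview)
The paper does not prove this proposition at all: it is stated as a direct citation of \cite[Theorem~A]{HYY} and \cite[Proposition~2.12]{Y0}, with no accompanying argument. Your overall strategy---prove membership in $\G^u(f)$ and in $\G^{cu}(f)$ separately on full-volume sets and then intersect---is exactly how the two cited references split the work, so in that sense your plan matches the literature.

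That said, your sketch for the $\G^{cu}$ part contains a genuine gap. You write that $\frac1n\log\bigl(\Leb(f^nD)/\Leb(D)\bigr)$ is bounded because the disk, ``being inside $\TT^3$, is volume-bounded''. This is false: $f^nD$ is only an \emph{immersed} $cu$-disk, it can wrap around the torus arbitrarily many times, and in fact its area typically grows exponentially at rate $\int\log|\det(Tf\mid_{E^{cu}})|\,d\nu$. So there is no free upper bound on the volume of $f^nD$ to play against. (There is also a minor slip: $\int_D\frac1n\log\Jac(Tf^n\mid_{E^{cu}})\,d\Leb$ is not equal to $\frac1n\log\bigl(\Leb(f^nD)/\Leb(D)\bigr)$; by Jensen's inequality for the concave function $\log$, you only get an inequality, and it points in the direction that does not help you.)

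The correct mechanism, used in \cite{Y0} and going back to the volume-growth/entropy estimates of \cite{Cog}, runs the other way: one shows that the exponential growth rate of $\Leb(f^nD)$ is a \emph{lower} bound for the metric entropy of any limit measure $\nu$. The point is that $f^nD$, being uniformly transverse to $E^s$, intersects any Bowen $(n,\varepsilon)$-ball in a set of uniformly bounded area; hence covering $f^nD$ requires at least $\Leb(f^nD)/C$ many such balls, and a Katok-type counting argument converts this into the inequality $h_\nu(f)\ge\int\log|\det(Tf\mid_{E^{cu}})|\,d\nu$. If you want to actually carry out the proof rather than cite it, this is the step you need to replace.
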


In general, the structure of $\G(f)$ is not as clear as that of $\G^u(f)$; for instance, it is not
always true that the extreme elements of $\G(f)$ are all ergodic. This is due to the presence of measures with negative center exponent. 
This changes under some extra hypotheses, see next Subsection.

\subsection{Diffeomorphisms with mostly expanding center}
From now on, we will assume that $f$ is $C^{2}$. 

Diffeomorphisms with mostly expanding center were introduced by Alves, Bonatti, and Viana \cite{ABV}
 using a technical definition regarding the Lyapunov exponents on sets with positive leaf volume. Later, a narrower definition was given
by Andersson and  V\'asquez \cite{AV}. The two definitions are not equivalent. Here we use the definition of \cite{AV}. 

\begin{definition}\label{df.me}
	$f$ has mostly expanding center if all the center exponents of every Gibbs $u$-state of $f$ are positive.
\end{definition}

\begin{proposition}\cite[Proposition~5.17]{Y1}\label{p.meGspace}
Suppose $f$ is a $C^2$ partially hyperbolic diffeomorphism with mostly expanding center, then there is a $C^1$ neighborhood $\cU$ of $f$ such that, for
any $C^1$ diffeomorphism $g\in \cU$, $\G(g)$ is compact and convex, and every extreme element of $\G(g)$
is an ergodic measure. In particular, almost every ergodic component of $\nu \in\G(g)$ is also in $\G(g)$.

  Moreover, the map
$\cG : g \mapsto \G(g)$ restricted to $\cU$ is upper semi-continuous
under the $C^1$ topology.
\end{proposition}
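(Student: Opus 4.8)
The plan is to deduce each assertion from the corresponding property of $\G^u$ in Proposition~\ref{p.Gu}, using two extra ingredients valid on a small $C^1$-neighborhood $\cU$ of $f$: a \emph{uniform} positivity of center exponents, and Ruelle's inequality together with the affinity of the entropy over ergodic decompositions. Observe first that $\G(g)\subset\G^u(g)$ for every $g$, so $\G(g)$ is automatically precompact and every measure in it inherits whatever uniform property we establish on $\G^u(g)$. \textbf{Step 1 (uniform positivity of center exponents).} I would show that, after shrinking $\cU$, there is $c>0$ with the property that for every $g\in\cU$ and every $\mu\in\G^u(g)$ all center Lyapunov exponents of $\mu$ are $\ge c$. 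If not, choose $g_n\xrightarrow{C^1}f$ and $\mu_n\in\G^u(g_n)$ with a center exponent $\le 1/n$; by the ergodic-decomposition part of Proposition~\ref{p.Gu} we may take each $\mu_n$ ergodic (so its center exponents are genuine numbers), pass to a weak-$*$ limit $\mu$, and invoke the upper semicontinuity $\limsup_n\G^u(g_n)\subset\G^u(f)$ (Proposition~\ref{p.Gu}) and continuity of $\mu\mapsto\int\log\det(Tg\mid_{E^{\sigma}})\,d\mu$ to conclude $\mu\in\G^u(f)$ with a nonpositive center exponent, contradicting Definition~\ref{df.me}. This is the only step that uses the hypothesis that $f$ has mostly expanding center.

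\textbf{Step 2 (convexity, compactness, ergodicity of extreme points).} Convexity of $\G(g)=\G^u(g)\cap\G^{cu}(g)$ is immediate since both factors are convex. For compactness, $\G^u(g)$ is compact by Proposition~\ref{p.Gu}, while $\G^{cu}(g)$ is closed because $\mu\mapsto h_\mu(g)$ is upper semicontinuous on $\cM_{\inv}(g)$ — the partially hyperbolic splitting makes $g$ asymptotically $h$-expansive — and $\mu\mapsto\int\log\det(Tg\mid_{E^{cu}})\,d\mu$ is continuous; hence $\G(g)=\G^u(g)\cap\G^{cu}(g)$ is compact. For the claim about ergodic components, let $\nu\in\G(g)$ have ergodic decomposition $\nu=\int\nu_\omega\,d\hat\nu(\omega)$. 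By Proposition~\ref{p.Gu}, $\nu_\omega\in\G^u(g)$ for $\hat\nu$-a.e.\ $\omega$, so by Step 1 each such $\nu_\omega$ has all center exponents $\ge c>0$; since $E^s$ is contracted by $g$, the positive exponents of $\nu_\omega$ are exactly those along $E^{cu}$, and Ruelle's inequality gives $h_{\nu_\omega}(g)\le\int\log\det(Tg\mid_{E^{cu}})\,d\nu_\omega$. Integrating and using that both $h_\bullet(g)$ and the Jacobian functional are affine,
$$
\int h_{\nu_\omega}(g)\,d\hat\nu=h_\nu(g)\ \ge\ \int\log\det(Tg\mid_{E^{cu}})\,d\nu=\int\!\!\left(\int\log\det(Tg\mid_{E^{cu}})\,d\nu_\omega\right)d\hat\nu\ \ge\ \int h_{\nu_\omega}(g)\,d\hat\nu,
$$
the first inequality being $\nu\in\G^{cu}(g)$. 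Thus equality holds $\hat\nu$-a.e., i.e.\ $\nu_\omega\in\G^{cu}(g)$, so $\nu_\omega\in\G(g)$ for a.e.\ $\omega$. If in addition $\nu$ is extreme in the convex set $\G(g)$, this forces $\nu=\nu_\omega$ a.e., so $\nu$ is ergodic.

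\textbf{Step 3 (upper semicontinuity of $g\mapsto\G(g)$).} Take $g_n\xrightarrow{C^1}g$ in $\cU$ and $\mu_n\in\G(g_n)$ with $\mu_n\to\mu$ weak-$*$. Then $\mu\in\G^u(g)$ by Proposition~\ref{p.Gu}. Applying the ergodic-decomposition argument of Step 2 to $\mu$ (its a.e.\ ergodic component lies in $\G^u(g)$, hence has center exponents $\ge c$, hence obeys Ruelle's inequality) yields $h_\mu(g)\le\int\log\det(Tg\mid_{E^{cu}})\,d\mu$, so it remains to prove the reverse inequality. Since $\mu_n\in\G^{cu}(g_n)$ while Ruelle gives $h_{\mu_n}(g_n)\le\int\log\det(Tg_n\mid_{E^{cu}_{g_n}})\,d\mu_n$, we get $h_{\mu_n}(g_n)=\int\log\det(Tg_n\mid_{E^{cu}_{g_n}})\,d\mu_n$, whose right-hand side converges to $\int\log\det(Tg\mid_{E^{cu}_g})\,d\mu$ by continuity of the splitting under $C^1$-perturbation and weak-$*$ convergence. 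Hence $\mu\in\G^{cu}(g)$ — and therefore $\mu\in\G(g)$ — will follow once we know $h_\mu(g)\ge\limsup_n h_{\mu_n}(g_n)$.

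\textbf{Main obstacle.} The last inequality in Step 3 is the genuinely hard point: it is a \emph{robust} upper semicontinuity of the metric entropy, i.e.\ joint upper semicontinuity of $(g,\mu)\mapsto h_\mu(g)$ near the $C^2$ diffeomorphism $f$, as opposed to the classical statement for a single asymptotically $h$-expansive map used already for the compactness of $\G^{cu}(g)$ in Step 2. I would obtain it from the persistence of the partially hyperbolic (dominated) splitting on $\cU$, which provides a \emph{uniform} asymptotic $h$-expansivity, equivalently uniform entropy estimates coming from the Pliss/hyperbolic-times control of measures in $\G(g)$ that is available here precisely because those measures have center exponents bounded below by $c$. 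Everything else reduces routinely to Proposition~\ref{p.Gu}, Ruelle's inequality, and the affinity of entropy.
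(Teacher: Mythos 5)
The paper does not prove this statement; it cites it from [Y1]. So I will assess the proposal on its own merits.

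Your Steps~1 and~2 are correct. In Step~1 the contradiction argument works once one observes that a weak-$*$ limit $\mu\in\G^u(f)$ with $\int\log\|Tf|_{E^c}\|\,d\mu\le 0$ must, by the ergodic-decomposition clause of Proposition~\ref{p.Gu}, have some ergodic component $\mu'\in\G^u(f)$ with $\lambda^c(\mu',f)\le 0$, contradicting the definition of mostly expanding center (since $f$ is $C^2$, $\G^u(f)$ is exactly the set of Gibbs $u$-states). In Step~2, the identity $\int h_{\nu_\omega}\,d\hat\nu\ge\int\bigl(\int\log\det\,d\nu_\omega\bigr)d\hat\nu\ge\int h_{\nu_\omega}\,d\hat\nu$ correctly forces equality $\hat\nu$-a.e.; the closedness of $\G^{cu}(g)$ via $h$-expansivity (one-dimensional center) is fine; and the extreme-point argument goes through because $\G(g)$ is compact and convex and hence closed under barycenters.

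The genuine gap is the one you flag yourself in Step~3, and your suggested route for filling it is not the right one. Once $\mu\in\G^u(g)$, each ergodic component has center exponent bounded below, but this only gives the Ruelle inequality $h_\mu(g)\le\int\log\det(Tg|_{E^{cu}})\,d\mu$ --- the direction you do \emph{not} need. The direction you need, $h_\mu(g)\ge\lim_n h_{\mu_n}(g_n)$, is not accessible from Pliss/hyperbolic-times estimates on the $\mu_n$ or on $\mu$: positivity of center exponents plus hyperbolic times gives control of unstable manifolds along $E^{cu}$, not absolute continuity along them, and hence does not produce a Pesin-type lower bound on $h_\mu(g)$. What actually closes the gap is a purely topological fact that has nothing to do with which measures you are taking limits of: partially hyperbolic diffeomorphisms with one-dimensional center on a compact $3$-manifold have zero tail entropy uniformly on a $C^1$ neighborhood, which yields \emph{joint} upper semicontinuity of $(g,\mu)\mapsto h_\mu(g)$ (Liao--Viana--Yang, or the Diaz--Fisher--Pacifico--Vieitez entropy-expansivity result upgraded to a neighborhood). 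With that in hand, $h_\mu(g)\ge\limsup_n h_{\mu_n}(g_n)=\lim_n\int\log\det(Tg_n|_{E^{cu}_{g_n}})\,d\mu_n=\int\log\det(Tg|_{E^{cu}_g})\,d\mu$, so $\mu\in\G^{cu}(g)\cap\G^u(g)=\G(g)$, and the Ruelle step in your Step~3 becomes superfluous. Until you invoke (or reprove) this robust entropy semicontinuity, Step~3 is incomplete, and the mostly-expanding hypothesis plays no role in it --- which is why replacing it with Pliss estimates on measures cannot be the intended mechanism.
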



If we combine the definition of the space $\G^{cu}(f)$ and Ruelle's inequality, we have that:
\begin{corollary}\label{c.mePesinformula}
Suppose $f$ has mostly expanding center, then there is a $C^1$ neighborhood $\cU$ of $f$, such that for
any $C^1$ diffeomorphism $g\in \cU$, every probability $\mu\in \G(g)$ satisfies Pesin formula:
$$h_\mu(g)=\int \log(\det(Tg\mid_{E_g^{cu}(x)}))d\mu(x)=\sum_{\lambda_i(\mu,g)>0}\lambda_i(\mu,g).$$
\end{corollary}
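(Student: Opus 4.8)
The plan is to take for $\cU$ the $C^{1}$ neighborhood supplied by Proposition~\ref{p.meGspace} and then, shrinking it if necessary, to arrange that every measure in $\G^{u}(g)$ (hence every measure in $\G(g)\subset\G^{u}(g)$) has only positive center Lyapunov exponents for all $g\in\cU$; this last reduction expresses the fact that the mostly expanding center condition persists for $C^{1}$-nearby maps, and it is part of the theory of~\cite{Y1} (it is also implicit in the properties of $\G$ recalled above). Since metric entropy, the integral $\int\log\det(Tg\mid_{E^{cu}_{g}})\,d\mu$, and $\sum_{\lambda_{i}(\mu,g)>0}\lambda_{i}(\mu,g)$ are all affine in $\mu$, and since almost every ergodic component of a measure in $\G(g)$ again lies in $\G(g)$ by Proposition~\ref{p.meGspace}, it is enough to prove the two claimed identities for an ergodic $\mu\in\G(g)$.

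So I would fix an ergodic $\mu\in\G(g)=\G^{u}(g)\cap\G^{cu}(g)$ and run a squeeze argument. First, the defining inequality of $\G^{cu}(g)$ (Definition~\ref{df.G}) gives
\[
h_{\mu}(g)\ \ge\ \int\log\det\bigl(Tg\mid_{E^{cu}_{g}(x)}\bigr)\,d\mu(x).
\]
Next, applying Oseledets' theorem to the invariant subbundle $E^{cu}_{g}$ (the integral of $\log\det$ of a cocycle equals the sum of the corresponding Lyapunov exponents), the right-hand side equals the center exponent of $\mu$ plus the sum of the $E^{u}_{g}$-exponents. By the choice of $\cU$ the center exponent is positive, the $E^{u}_{g}$-exponents are positive (uniform expansion), and the $E^{s}_{g}$-exponents are negative; hence
\[
\int\log\det\bigl(Tg\mid_{E^{cu}_{g}(x)}\bigr)\,d\mu(x)\ =\ \sum_{\lambda_{i}(\mu,g)>0}\lambda_{i}(\mu,g).
\]
Finally, Ruelle's inequality gives $h_{\mu}(g)\le\sum_{\lambda_{i}(\mu,g)>0}\lambda_{i}(\mu,g)$, so chaining the three displays yields $h_{\mu}(g)\ge\int\log\det(Tg\mid_{E^{cu}_{g}})\,d\mu=\sum_{\lambda_{i}(\mu,g)>0}\lambda_{i}(\mu,g)\ge h_{\mu}(g)$, forcing equality everywhere. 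That is exactly the asserted Pesin formula, and it then transfers to arbitrary $\mu\in\G(g)$ by the ergodic decomposition mentioned above.

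The only step that is not a purely formal manipulation is the first one: ensuring that, after shrinking $\cU$, measures in $\G(g)$ still have positive center exponents for every nearby $C^{1}$ diffeomorphism $g$. This is precisely the robustness of the "mostly expanding center" property, which I would quote from~\cite{Y1}; once this is in hand, the identification of $\int\log\det(Tg\mid_{E^{cu}_{g}})\,d\mu$ with the sum of the positive Lyapunov exponents and the use of Ruelle's inequality are standard, and the ergodic-decomposition reduction is routine.
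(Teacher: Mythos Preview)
Your argument is correct and follows exactly the route the paper indicates: the corollary is stated with only the one-line justification ``If we combine the definition of the space $\G^{cu}(f)$ and Ruelle's inequality,'' and your squeeze (definition of $\G^{cu}$ for the lower bound, Ruelle for the upper bound, Oseledets to identify the integral with the sum of positive exponents once the center exponent is known to be positive) is precisely that combination spelled out. The only extra ingredient you need, the positivity of the center exponent for measures in $\G(g)$ for $g$ $C^{1}$-close to $f$, is indeed borrowed from~\cite{Y1} and is implicit in the properties of $\G(\cdot)$ listed in the Introduction and in Proposition~\ref{p.meGspace}; your reduction to the ergodic case via Proposition~\ref{p.meGspace} is a clean way to make the Lyapunov-exponent identity unambiguous.
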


Later, we will use this corollary to obtain a lower bound of the metric entropy for measures in $\G(g)$, which will enable us to apply Propositions~\ref{p.isomophic} and~\ref{p.noatom}.

\section{Positive center exponent\label{s.ME}}
Throughout this section let $f$ be a $C^2$ partially hyperbolic diffeomorphism satisfying the assumptions of  Theorem~\ref{m.boundedpotention}.
We are going to show that $f$ has mostly expanding center.

By the assumption of Theorem~\ref{m.boundedpotention}, the set $\cB(f)=\{x: | \det(Tf\mid_{E^{cu}(x)}) | \leq \lambda_{u}\}$ has zero leaf volume inside any strong unstable leaf.  Therefore, since the conditional measures of Gibbs $u$-states along the unstable leaves are equivalent to the Lebesgue measures on the
corresponding leaves,  we have
\begin{lemma}\label{l.cuexponents}
	For any Gibbs $u$-state $\mu$ of $f$,
	\begin{equation}\label{eq.cuexponents}
	\lambda^u(\mu,f)+\lambda^c(\mu,f)=\int \log |\det(Tf\mid_{E^{cu}(x)})| d\mu(x)> \log \lambda_{u}.
	\end{equation}
\end{lemma}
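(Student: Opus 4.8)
The proof is essentially a translation of the hypothesis of Theorem~\ref{m.boundedpotention} into the language of Lyapunov exponents, using the defining property of Gibbs $u$-states. First I would recall that for a $C^2$ partially hyperbolic diffeomorphism, the Oseledets theorem together with the subadditive ergodic theorem gives, for any invariant measure $\mu$,
$$
\lambda^u(\mu,f)+\lambda^c(\mu,f)=\int \log\bigl|\det(Tf\mid_{E^{cu}(x)})\bigr|\,d\mu(x),
$$
since $E^{cu}$ is a $2$-dimensional invariant bundle whose top exterior power has log-Jacobian equal to the integrand; this identity holds regardless of the sign of the center exponent, so the equality in \eqref{eq.cuexponents} is immediate. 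The content of the lemma is therefore the strict inequality $\int \log|\det(Tf\mid_{E^{cu}})|\,d\mu>\log\lambda_u$.

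For the inequality, let $\mu$ be a Gibbs $u$-state. By definition (Pesin--Sinai~\cite{PS82}, or Remark~\ref{rk.Gu}(a)) the conditional measures $\mu_x^\xi$ of $\mu$ along a measurable partition $\xi$ subordinate to $\cF^u$ are absolutely continuous — indeed equivalent — to the leaf-volume (Lebesgue) measure on the unstable leaves. The hypothesis says that the set $\cB(f)=\{x:|\det(Tf\mid_{E^{cu}(x)})|\le\lambda_u\}$ has zero leaf volume inside every strong unstable leaf. Combining these two facts: for $\mu$-almost every $x$, the conditional measure $\mu_x^\xi$ gives zero mass to $\cB(f)\cap\xi(x)$, because that set has zero leaf volume and $\mu_x^\xi$ is equivalent to leaf volume. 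Integrating against $\mu$ via the disintegration formula $\mu(\cB(f))=\int \mu_x^\xi(\cB(f)\cap\xi(x))\,d\mu(x)$ yields $\mu(\cB(f))=0$. Hence $\mu$-a.e.\ point lies in the complement, where $\log|\det(Tf\mid_{E^{cu}(x)})|>\log\lambda_u$ strictly (a pointwise strict inequality on a full-measure set). This does not yet give a strict inequality after integration, so I would instead argue: the complement $M\setminus\cB(f)=\bigcup_{k\ge 1}\{x:|\det(Tf\mid_{E^{cu}(x)})|\ge\lambda_u(1+1/k)\}$, and since $\mu(M\setminus\cB(f))=1$, by continuity of measure there is $k$ with $\mu\bigl(\{x:|\det(Tf\mid_{E^{cu}(x)})|\ge\lambda_u(1+1/k)\}\bigr)>0$; then
$$
\int \log\bigl|\det(Tf\mid_{E^{cu}(x)})\bigr|\,d\mu(x)\ge \log\lambda_u + \mu\bigl(\{\cdots\}\bigr)\log(1+1/k) > \log\lambda_u,
$$
using that the integrand is $\ge\log\lambda_u$ everywhere (as $\mu(\cB(f))=0$) and strictly larger on a set of positive measure.

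The only subtle point — and the place where a careful reader should slow down — is the passage from "$\cB(f)$ has zero leaf volume" to "$\mu(\cB(f))=0$": this requires knowing that the leaf-volume class is well behaved under disintegration, i.e.\ that for a.e.\ $x$ the conditional $\mu_x^\xi$ is genuinely equivalent (not merely absolutely continuous with a density that could vanish on a positive-leaf-volume set) to leaf volume, and that the partition $\xi$ can be chosen with leaves containing full open neighborhoods inside $\cF^u(x)$; this is exactly the content of the definition of $\mu$-subordinate partition recalled in Subsection~\ref{ss.2.3} together with the standard fact that Gibbs $u$-states have such equivalent conditionals. Once that is in hand, no further difficulty arises — the rest is the elementary measure-theoretic manipulation above. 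I expect this to be essentially the author's argument as well, since it is forced by the setup.
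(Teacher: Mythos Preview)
Your proposal is correct and follows essentially the same approach as the paper: use that Gibbs $u$-state conditionals along $\cF^u$ are equivalent to leaf Lebesgue to conclude $\mu(\cB(f))=0$, whence the integrand strictly exceeds $\log\lambda_u$ on a full-measure set. The paper in fact gives only this one-line argument and does not spell out the passage from pointwise to integrated strict inequality, so your additional care there is a genuine (if minor) improvement.
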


More importantly, we obtain the uniform positivity for the center Lyapunov exponent:
\medskip

\begin{lemma}\label{lemma.positive} The  diffeomorphism
	$f$ has mostly expanding center. That is, for any Gibbs $u$-state $\mu$ of $f$, the center Lyapunov exponent of $\mu$, $\lambda^c(\mu,f)$,
	is positive:
	\begin{equation}\label{eq.positiveexponent}
	\lambda^c(\mu,f)=\int \log |\det(Tf\mid_{E^{c}(x)})| d\mu(x)>0.
	\end{equation}
\end{lemma}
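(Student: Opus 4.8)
The goal is to upgrade the inequality $\lambda^u(\mu,f)+\lambda^c(\mu,f) > \log\lambda_u$ of Lemma~\ref{l.cuexponents} to the strict positivity $\lambda^c(\mu,f) > 0$ for every Gibbs $u$-state $\mu$. The plan is to feed the DA/isotopy hypothesis into the picture via the semiconjugacy $\phi$ and the entropy estimates of Section~\ref{ss.2.3}. First I would reduce to ergodic $\mu$: by Proposition~\ref{p.Gu} (and Remark~\ref{rk.Gu}(a)) almost every ergodic component of a Gibbs $u$-state is again a Gibbs $u$-state, and the center exponent of $\mu$ is the average of the center exponents of its components, so it suffices to prove $\lambda^c(\mu,f)>0$ for ergodic Gibbs $u$-states $\mu$. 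Suppose for contradiction that $\lambda^c(\mu,f)\le 0$ for some ergodic Gibbs $u$-state $\mu$.

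The next step is to get a \emph{lower bound} on the metric entropy $h_\mu(f)$ that exceeds $\log\lambda_u$. Since $\mu$ is a Gibbs $u$-state, by Remark~\ref{rk.Gu}(b) its partial entropy along $\cF^u$ equals the integrated unstable Jacobian: $h_\mu(f,\cF^u)=\int\log|\det(Tf\mid_{E^u})|\,d\mu=\lambda^u(\mu,f)$ (the last equality using ergodicity and one-dimensionality of $E^u$). Using $h_\mu(f)\ge h_\mu(f,\cF^u)=\lambda^u(\mu,f)$ together with the assumption $\lambda^c(\mu,f)\le 0$ and Lemma~\ref{l.cuexponents}, one gets
$$
h_\mu(f)\ \ge\ \lambda^u(\mu,f)\ \ge\ \lambda^u(\mu,f)+\lambda^c(\mu,f)\ >\ \log\lambda_u.
$$
Thus $\mu$ is an ergodic measure of $f\in\cD^2(A)$ with $h_\mu(f)>\log\lambda_u$, so Proposition~\ref{p.isomophic} applies: $\phi$ is almost surely injective on the support of $\mu$, and by Corollary~\ref{c.preserceentropy} the push-forward $\nu=\phi_*\mu$ satisfies $h_\nu(A)=h_\mu(f)>\log\lambda_u$.

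Now I would exploit Proposition~\ref{p.centerlinear}: the $A$-invariant measure $\nu$ has $h_\nu(A,\cF^c_A)\ge h_\nu(A)-\log\lambda_u>0$, so the partial entropy of $\nu$ along the (expanding) center foliation of $A$ is strictly positive. The key geometric idea is that $\phi$ maps center leaves of $f$ into center leaves of $A$ (Proposition~\ref{p.coherent}) and is almost surely injective on $\supp\mu$; moreover $\phi$ sends $\cF^u_f$ into $\cF^u_A$ and $\cF^{cu}_f$ into $\cF^{cu}_A$ with $\cF^{cu}=\bigcup_{c\text{-leaf}}\cF^u$ (Corollary~\ref{c.csproduct}). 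The partial entropy of $\mu$ along $\cF^{cu}_f$, which is bounded above by $h_\mu(f)$ and, because $\mu$ is $u$-Gibbs, decomposes (via Ledrappier--Young / Abramov--Rokhlin type formulas along the sub-foliation $\cF^u\subset\cF^{cu}$) as $h_\mu(f,\cF^{cu})=h_\mu(f,\cF^u)+$ (a non-negative "center" term). If $\lambda^c(\mu,f)\le 0$, Ruelle's inequality along the center direction forces this extra center contribution to vanish, i.e. $h_\mu(f,\cF^{cu})=h_\mu(f,\cF^u)$, hence the conditional measures of $\mu$ along center leaves are atomic (Proposition~\ref{p.noatom} applied to the center foliation of $f$, or rather its image). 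Pushing forward by $\phi$ — injective a.s. on $\supp\mu$ and taking $c$-leaves to $c$-leaves — this atomicity transfers to $\nu$ along $\cF^c_A$, contradicting $h_\nu(A,\cF^c_A)>0$ via Proposition~\ref{p.noatom} again. This contradiction shows $\lambda^c(\mu,f)>0$.

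\textbf{Main obstacle.} The delicate point is the bookkeeping of partial entropies along nested foliations and their behaviour under $\phi$: one must relate $h_\mu(f,\cF^{cu})$, $h_\mu(f,\cF^u)$ and the center contribution cleanly, argue that $\lambda^c\le 0$ kills the center term (this is where the $cu$-volume hypothesis of Theorem~\ref{m.boundedpotention} enters, forbidding $\lambda^c$ from being a large negative number that would be "compensated"), and then verify that $\phi$ really transports the relevant conditional measures — this requires knowing $\phi$ is a measurable isomorphism on a full $\mu$-measure set and that it respects the measurable partitions subordinate to the center foliations on both sides. An alternative, possibly cleaner route that avoids the subtlest part of this: instead of arguing via atoms of center conditionals, directly combine $h_\mu(f)=h_\mu(f,\cF^{cu})\le\lambda^u(\mu,f)+\max\{\lambda^c(\mu,f),0\}=\lambda^u(\mu,f)$ (using $\lambda^c\le0$) with the strict inequality $h_\mu(f)=h_\nu(A)>\log\lambda_u$ and the fact that for $u$-Gibbs states with $\lambda^c\le 0$ one has $h_\mu(f,\cF^{cu})=h_\mu(f,\cF^u)=\lambda^u(\mu,f)\le\log\lambda_u$ whenever the measure also lies in $\G^{cu}$ — but since Gibbs $u$-states with non-positive center exponent automatically lie in $\G^{cu}(f)$ (as noted right after Definition~\ref{df.G}), this forces $h_\mu(f)\le\log\lambda_u$, contradicting $h_\mu(f)>\log\lambda_u$. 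I expect the final write-up to follow this second, shorter contradiction, with the first paragraph's reduction to ergodic components still needed; the hard part is making the inequality $h_\mu(f)\le\lambda^u(\mu,f)\le\log\lambda_u$ rigorous when $\lambda^c\le0$, i.e. controlling the center partial entropy by Ruelle's inequality in the non-positive-exponent case.
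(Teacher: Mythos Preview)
Your reduction to an ergodic Gibbs $u$-state $\mu'$ with $\lambda^c(\mu',f)\le 0$, and your derivation of $h_{\mu'}(f)\ge h_{\mu'}(f,\cF^u)=\lambda^u(\mu',f)\ge \lambda^u(\mu',f)+\lambda^c(\mu',f)>\log\lambda_u$ via Lemma~\ref{l.cuexponents}, are exactly what the paper does. After that point you and the paper part ways: the paper simply invokes Theorem~\ref{thm.positive} (i.e.\ \cite[Theorem~A]{VY2}), which says directly that for $f\in\cD^2(A)$ any ergodic measure with entropy $>\log\lambda_u$ has \emph{strictly positive} center exponent, finishing the proof in one line. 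You instead attempt to reprove that theorem from the ingredients in Section~\ref{ss.2.3}.

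Your ``first route'' is in spirit the proof of \cite[Theorem~A]{VY2}, but as written it has the gap you yourself flag: $\cF^c_f$ is \emph{not} an expanding foliation when $\lambda^c\le 0$, so neither the partial entropy $h_\mu(f,\cF^c_f)$ nor Proposition~\ref{p.noatom} is available on the $f$-side; one must work on the $A$-side from the start and carefully transport conditional measures through $\phi$. This can be done, but it is the content of a separate theorem, not a couple of sentences. Your ``second, shorter'' route contains a genuine error: there is no reason why $\lambda^u(\mu,f)\le\log\lambda_u$ for a Gibbs $u$-state of a DA diffeomorphism---the unstable exponent of $f$ is not bounded by that of $A$---and membership in $\G^{cu}(f)$ gives only $h_\mu(f)\ge \lambda^u+\lambda^c$, not an upper bound on $\lambda^u$. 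So the chain $h_\mu(f)\le\lambda^u(\mu,f)\le\log\lambda_u$ breaks at the second inequality, and no contradiction follows. The fix is simply to quote Theorem~\ref{thm.positive}.
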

%
%
%

The proof of this lemma follows from the following theorem:
\begin{theorem} \cite[Theorem A]{VY2}\label{thm.positive} Let $\mu$ be an ergodic invariant probability measure of $f\in {\mathcal D}^{1}(A)$ with
$h_{\mu}(f)>\log \lambda_{u}$. Then every full $\mu$-measure set $Z\subset M$ intersects almost every
center leaf in an uncountable subset. Moreover, the center Lyapunov exponent
along the center direction is non-negative, and even strictly positive if f is $C^{2}$. 
\end{theorem}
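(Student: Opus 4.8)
The plan is to transport $\mu$ to the linear model $A$ through the Franks semiconjugacy $\phi$, exploit that the center foliation of the \emph{linear} map is very well behaved, and pull the conclusions back using the crucial input (Proposition~\ref{p.isomophic}) that $\phi$ is injective on a full $\mu$-measure set. First I would set $\nu=\phi_*\mu$: then $\nu\in\cM_{\erg}(A)$ and $h_\nu(A)=h_\mu(f)>\log\lambda_u$ by Corollary~\ref{c.preserceentropy}. By Proposition~\ref{p.isomophic} there is a full $\mu$-measure set $X_0$ on which $\phi$ is injective, so $\phi\colon X_0\to\phi(X_0)$ is, mod $0$, a Borel isomorphism conjugating $(f,\mu)$ to $(A,\nu)$. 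Since $\phi$ maps center leaves of $f$ into center leaves of $A$ (Proposition~\ref{p.coherent}) and, by the structure theory of dynamically coherent DA diffeomorphisms (the lift of $\phi$ is at bounded distance from the identity while center leaves are properly embedded lines that separate inside $\cF^{cu}$ and $\cF^{cs}$; see \cite{Pot15, Ure12}), induces a bijection between the two center foliations, the center partition of $(f,\mu)$ corresponds mod $0$ to that of $(A,\nu)$; disintegrating along these foliations one gets $\phi_*\mu^c_x=\nu^c_{\phi(x)}$ for $\mu$-a.e.\ $x$, with $\phi$ restricting to a Borel isomorphism of $\cF^c_f(x)$ onto $\cF^c_A(\phi(x))$.

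Next I would note that $\cF^c_A$ is an expanding foliation of $A$ with leafwise rate $\lambda_c>1$, so Proposition~\ref{p.centerlinear} gives $h_\nu(A,\cF^c_A)\ge h_\nu(A)-\log\lambda_u>0$ and hence, by Proposition~\ref{p.noatom}, the conditional measures $\nu^c_y$ are non-atomic for $\nu$-a.e.\ $y$; by the correspondence above, $\mu^c_x$ is non-atomic for $\mu$-a.e.\ $x$. Given any full $\mu$-measure set $Z$, disintegrating $\mu$ along the center foliation gives $\mu^c_x(Z)=1$ for $\mu$-a.e.\ $x$, and since a non-atomic measure assigns zero mass to every countable set, $Z\cap\cF^c_f(x)$ is uncountable for $\mu$-a.e.\ $x$. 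This proves the first assertion, and it records the one fact I will use against the exponent: \emph{the conditionals of $\mu$ along the center foliation have no atoms.}

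For the non-negativity of $\lambda^c(\mu,f)$ I would argue by contradiction. If $\lambda^c(\mu,f)<0$, then at $\mu$-a.e.\ $x$ all Oseledets exponents along the dominated bundle $E^{cs}=E^s\oplus E^c$ are negative, so (Pesin theory, available in class $C^1$ for dominated bundles) there is a local stable manifold tangent to $E^{cs}(x)$; it contains a center segment through $x$, and iterating backwards — which, since $\lambda^c<0$, expands the center on average — yields $\cF^c_f(x)\subset W^s(x)$ for $\mu$-a.e.\ $x$. Applying $\phi$, using its uniform continuity and the fact that the only stable direction of the linear map $A$ is $E^s_A$, the connected set $\phi\big(\cF^c_f(x)\big)$ lies in the discrete set $\cF^s_A(\phi(x))\cap\cF^c_A(\phi(x))$, hence equals $\{\phi(x)\}$, so $\cF^c_f(x)\subset\phi^{-1}(\phi(x))$. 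But $\phi^{-1}(\phi(x))$ is a center segment of length at most $K$ (Proposition~\ref{p.coherent}(b)), whereas center leaves of a DA diffeomorphism are homeomorphic to $\RR$ and so of infinite length; contradiction. Thus $\lambda^c(\mu,f)\ge0$.

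Finally, to upgrade $\ge0$ to $>0$ when $f$ is $C^2$ — the hard part — I would again argue by contradiction, assuming $\lambda^c(\mu,f)=0$. Since the center is one-dimensional the center exponent then vanishes $\mu$-a.e., which is exactly the setting of Ledrappier's invariance principle (in the $C^{1+}$ form of Avila--Viana); this is where the $C^2$ hypothesis is used, via the bounded distortion along unstable leaves that it supplies. Applying the principle to $f$ and to $f^{-1}$ gives that the family $\{\mu^c_x\}$ of center conditionals is invariant under the strong-unstable and strong-stable holonomies. One then has to show that this rigidity is incompatible with $h_\mu(f)>\log\lambda_u$: transporting through $\phi$, the conditionals $\{\nu^c_y\}$ along $\cF^c_A$ become invariant under the \emph{affine} holonomies of $A$ and $A$-equivariant, yet remain non-atomic; combining holonomy invariance with the expansion of $\cF^c_A$ carried by the dynamics of $A$ (each orbit return scales the relevant center leaf by a definite factor) should produce a non-discrete family of affine contractions of an individual center leaf preserving a fixed probability measure, which is impossible unless that measure is a Dirac mass — contradicting non-atomicity and forcing $\lambda^c(\mu,f)>0$. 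The main obstacle is precisely making this last step rigorous: the center conditionals are only defined through \emph{non-invariant} subordinate partitions, so holonomy invariance has to be recovered through the invariance principle (hence the $C^2$ hypothesis), and it must then be balanced against the expansion that the semiconjugacy transports from $A$ in order to contradict the non-atomicity proved above — which is also why only non-negativity is claimed for $C^1$ maps, whose center conditionals may be carried by points atypical for the derivative cocycle, leaving no such rigidity available.
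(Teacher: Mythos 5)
This statement is not proved in the paper at all: it is quoted verbatim from \cite{VY2}, so the only internal material to compare with is the closely parallel argument the paper does carry out for the unstable side (Proposition~\ref{p.sizeofstablemanifold}, Lemma~\ref{l.zeroinfty}). Your first part is essentially that intended argument and is fine: push $\mu$ to $\nu=\phi_*\mu$, use Corollary~\ref{c.preserceentropy} and Proposition~\ref{p.centerlinear} to get $h_\nu(A,\cF^c_A)>0$, apply the last clause of Proposition~\ref{p.noatom} to the full $\nu$-measure set $\phi(Z\cap X_0)$, and pull back through the a.e.\ injectivity of $\phi$ (Proposition~\ref{p.isomophic}); you do not even need the claimed identification $\phi_*\mu^c_x=\nu^c_{\phi(x)}$ of disintegrations.

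The non-negativity argument has a genuine gap at the sentence ``iterating backwards \dots yields $\cF^c_f(x)\subset W^s(x)$''. To exhaust the center leaf you must push center plaques of uniform size, located over a positive-measure set of return times, along the orbit and show their lengths inside the leaf tend to infinity. In class $C^1$ there is no distortion control along those plaques, so the Birkhoff/Pliss information on the single orbit of $x$ does not bound the derivative along the plaque; and the device that rescues the dual statement in Proposition~\ref{p.sizeofstablemanifold} --- measuring the length of the $\phi$-image and using that $A$ uniformly expands $\cF^c_A$ under forward iteration --- is unavailable here, because your plaques would be transported by $A^{-n}$, which contracts $\cF^c_A$; indeed in your scenario the $\phi$-images of those plaques are singletons, so no growth can be extracted. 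Your chosen endgame (a center segment of length $>K$ inside one fiber $\phi^{-1}(\phi(x))$ versus Proposition~\ref{p.coherent}(b)) is exactly what forces you into this unprovable global inclusion. The gap is repairable, but by a different contradiction: if $\lambda^c(\mu,f)<0$, the $C^1$ Pesin theory you invoke gives, on a positive-measure set $\Lambda_0$, a center plaque of uniform radius $r_0$ inside $W^s_{loc}(x)$; your own computation ($W^s_A(\phi(x))\cap\cF^c_A(\phi(x))=\{\phi(x)\}$, using irreducibility of $A$) shows $\phi$ collapses each such plaque to the point $\phi(x)$. Then $\phi(\Lambda_0)$ meets every center leaf of $A$ in an at most countable set (disjoint nontrivial subintervals of a line are countable), contradicting the non-atomicity of the center conditionals of $\nu$ that you established in part one --- this is precisely the mechanism of Lemma~\ref{l.zeroinfty}, and it never needs the whole leaf.

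For the $C^2$ statement --- the part this paper actually uses, via Lemma~\ref{lemma.positive} --- your text is a program, not a proof: you concede the decisive step is unexecuted, and the tool you reach for does not apply off the shelf. Holonomy invariance of center disintegrations for an \emph{arbitrary} ergodic measure with vanishing center exponent is not what Ledrappier or Avila--Viana provide (their settings are projective cocycles, fibered systems over a base measure with product structure, or $u$-Gibbs states); in this DA setting $u$-invariance for a general high-entropy measure is itself a substantial theorem (an invariance principle for high-entropy measures, in the spirit of later work of Tahzibi and Yang) whose proof again passes through partial-entropy identities such as $h_\mu(f)=h_\mu(f,\cF^u)$. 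Even granting $su$-invariance, the final clash between affine holonomies, $A$-equivariance and non-atomicity is left as a hope. So your proposal reproves the first assertion, can be repaired to yield non-negativity in $C^1$, but does not establish the strict positivity that the paper imports from \cite{VY2}.
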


\begin{proof}[Proof of Lemma \ref{lemma.positive}] Suppose that $\lambda^c(\mu,f)\leq 0$. Since $\mu$ is a Gibbs $u$-state, it has  an ergodic component $\mu'$ such that $\lambda^c(\mu',f)\leq 0$ and it is also a Gibbs $u$-state (see Proposition \ref{p.Gu}). As a consequence of Lemma \ref{l.cuexponents}, we have that $h_{\mu'}(f)> \log \lambda_{u}$. Then we can apply Theorem \ref{thm.positive} above  to $\mu'$ and since $f$ is $C^2$, we arrive to a contradiction.  We therefore get that the center exponent is positive. 
\end{proof}  

Observe that the integration in \eqref{eq.cuexponents} and \eqref{eq.positiveexponent} depends continuously on the measures since the integrand is a continuous function. On the other hand, the space of Gibbs $u$-states is compact. As a result, there is $a>0$ such that for any Gibbs $u$-state $\mu$ of $f$,
we have:

\begin{equation}\label{eq.uniformcuexponents}
\lambda^u(\mu,f)+\lambda^c(\mu,f)=\int \log |\det(Tf\mid_{E^{cu}(x)})| d\mu(x)> \log\lambda_{u}+2a,
\end{equation}

and
\begin{equation}\label{eq.uniformcexponent}
\lambda^c(\mu,f)=\int \log |\det(Tf\mid_{E^{c}(x)})| d\mu(x)>2a.
\end{equation}

(\eqref{eq.uniformcuexponents} follows from \eqref{l.cuexponents} and \eqref{eq.uniformcexponent} follows from \eqref{eq.positiveexponent}, we then take a minimum $a>0$).

By Proposition~\ref{p.meGspace} and Corollary~\ref{c.mePesinformula} we also get:
\begin{lemma}\label{l.robustuniform}
	There is a $C^1$ neighborhood $\cU$ of $f$, and $a>0$ such that for any $C^1$ diffeomorphism $g\in \cU$,
	and any invariant measure $\nu \in \G(g)$,
	$$h_\nu(g)=\lambda^u(\nu,g)+\lambda^c(\nu,g)=\int \log |\det(Tg\mid_{E^{cu}(x)})| d\nu(x)> \log \lambda_{u}+a,$$
	$$\text{and  }\;\;\;\;\;\; \lambda^c(\nu,g)=\int \log |\det(Tg\mid_{E^{c}(x)})| d\nu(x)>a.$$
\end{lemma}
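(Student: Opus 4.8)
The plan is to take for $a$ the very constant already produced in \eqref{eq.uniformcuexponents}–\eqref{eq.uniformcexponent} (which bound the $cu$-Jacobian and the center exponent of every Gibbs $u$-state of $f$ below by $\log\lambda_u+2a$ and $2a$ respectively), and to let $\cU$ be a $C^1$ neighborhood of $f$ contained simultaneously in the neighborhood furnished by Proposition~\ref{p.meGspace} (so that for every $g\in\cU$ the set $\G(g)$ is compact and the map $\cG:g\mapsto\G(g)$ is upper semi-continuous) and in the one furnished by Corollary~\ref{c.mePesinformula} (so that every $\nu\in\G(g)$ satisfies Pesin's entropy formula). We will see that the conclusion holds after shrinking $\cU$ once more.

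The chain of equalities needs no perturbative input. Fix $g\in\cU$ and $\nu\in\G(g)$: Corollary~\ref{c.mePesinformula} gives $h_\nu(g)=\int\log|\det(Tg\mid_{E^{cu}_g(x)})|\,d\nu(x)$, and since $E^{cu}_g=E^c_g\oplus E^u_g$ is a splitting of an invariant $2$-plane field into invariant line fields, the standard identity writing the integral of the log-Jacobian of an invariant subbundle as the sum of the associated Lyapunov exponents — the same identity already invoked in Lemma~\ref{l.cuexponents} — yields $\int\log|\det(Tg\mid_{E^{cu}_g(x)})|\,d\nu(x)=\lambda^u(\nu,g)+\lambda^c(\nu,g)$.

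For the two strict inequalities I would argue by contradiction through upper semi-continuity. If the statement failed with this $a$, there would be $g_n\xrightarrow{C^1}f$ and $\nu_n\in\G(g_n)$ violating, for each $n$, at least one of $\int\log|\det(Tg_n\mid_{E^{cu}_{g_n}})|\,d\nu_n>\log\lambda_u+a$ or $\lambda^c(\nu_n,g_n)>a$. Dropping finitely many terms we may assume $g_n\in\cU$, and by weak-$*$ compactness of the probability measures on $\TT^3$ we may assume $\nu_n\to\nu$. Upper semi-continuity of $\cG$ (Proposition~\ref{p.meGspace}) forces $\nu\in\G(f)$, and since $f$ is $C^2$ we have $\G(f)\subset\G^u(f)$, which by Remark~\ref{rk.Gu}(a) is exactly the set of Gibbs $u$-states of $f$; hence \eqref{eq.uniformcuexponents} and \eqref{eq.uniformcexponent} apply to $\nu$ and give
$$\int\log|\det(Tf\mid_{E^{cu}_f})|\,d\nu>\log\lambda_u+2a\quand \lambda^c(\nu,f)=\int\log|\det(Tf\mid_{E^c_f})|\,d\nu>2a.$$
Because the dominated splitting $E^s\oplus E^c\oplus E^u$ varies continuously with the diffeomorphism in the $C^1$ topology, the functions $\psi_n(x)=\log|\det(Tg_n\mid_{E^{cu}_{g_n}(x)})|$ and $\chi_n(x)=\log|\det(Tg_n\mid_{E^c_{g_n}(x)})|$ converge uniformly to their $f$-counterparts $\psi$ and $\chi$; combined with $\nu_n\to\nu$ this forces $\int\psi_n\,d\nu_n\to\int\psi\,d\nu$ and $\int\chi_n\,d\nu_n\to\int\chi\,d\nu$, so passing to the limit in the violated inequality contradicts the two displayed estimates for $\nu$. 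This detour through $f$ is unavoidable: the $\nu_n$ are not Gibbs $u$-states in any sense usable for the merely-$C^1$ maps $g_n$, so \eqref{eq.uniformcuexponents}–\eqref{eq.uniformcexponent} can only be exploited in the limit.

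The single step deserving care — and the one I expect a referee to scrutinize — is the interchange of limits $\lim_n\int\psi_n\,d\nu_n=\int\psi\,d\nu$: one writes $\int\psi_n\,d\nu_n-\int\psi\,d\nu=\int(\psi_n-\psi)\,d\nu_n+\bigl(\int\psi\,d\nu_n-\int\psi\,d\nu\bigr)$, bounds the first term by $\|\psi_n-\psi\|_{\infty}\to0$ (uniform continuity of the splitting together with continuity of the log-Jacobian determinant on the relevant compact family of $2$-planes), and sends the second to zero by weak-$*$ convergence; the same applies to $\chi_n$. Everything else is a direct application of Proposition~\ref{p.meGspace}, Corollary~\ref{c.mePesinformula}, and the already-established bounds \eqref{eq.uniformcuexponents}–\eqref{eq.uniformcexponent}.
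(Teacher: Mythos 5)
Your proof is correct and takes essentially the same route as the paper: both rest on the compactness of $\G(f)$, the upper semi-continuity of $g\mapsto\G(g)$ from Proposition~\ref{p.meGspace}, the continuity of $\nu\mapsto\int\log|\det(Tg\mid_{E^{cu}})|\,d\nu$ in $(\nu,g)$, the uniform bounds \eqref{eq.uniformcuexponents}--\eqref{eq.uniformcexponent}, and Corollary~\ref{c.mePesinformula} for the equalities. The only difference is cosmetic: you phrase the argument as a sequential contradiction, whereas the paper runs the same logic directly by choosing an open neighborhood $\cV$ of $\G(f)$ on which the strict inequalities persist and then shrinking $\cU$ so that $\G(g)\subset\cV$.
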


\begin{proof}
By Proposition \ref{p.meGspace}, $G(f)$ is compact. Since $\int \log|\det (Tf|_{E^{cu}(x)})|d\mu(x)$ and $\int \log|\det (Tf|_{E^{c}(x)})|d\mu(x)$ depend continuously on $\mu$ and $f$, \eqref{eq.uniformcuexponents} and \eqref{eq.uniformcexponent} imply that  there exist open neighborhoods $\mathcal{V}$ of  $G(f)$ in the space of probability measures and $\mathcal U$ of $f$ such that,  for $\nu\in \mathcal{V}$ and $g\in\mathcal{U}$, we have $\int \log|\det (Tg|_{E^{cu}(x)})|d\nu(x)>\log \lambda_{u}+a$ and $\int \log|\det (Tg|_{E^{c}(x)})|d\nu(x)>a$. The upper semicontinuity of the function $g\mapsto G(g)$ (Proposition~\ref{p.meGspace}) implies that $G(g)\subset \mathcal{V}$ for every $g\in \mathcal U$ if $\mathcal{U}$ is small enough. This gives the two inequalities of the lemma. The Pesin formula (Corollary~\ref{c.mePesinformula}) gives the first equality of the first equation.  
\end{proof}


\section{Ergodic measures with large entropy\label{s.hyperbolicity}}

Throughout this section let $f$  be a $C^1$ DA diffeomorphism and let $\mu$ be an $f$-invariant probability measure such that
\begin{enumerate}
 \item $\mu$ is ergodic,
 \item $\lambda^{c}(\mu,f)>0$, and
 \item  
$h_\mu(f)>\log \lambda_{u}.$

\end{enumerate}

\subsection{Local unstable manifold for $C^1$ diffeomorphisms}

Depending on the center exponent, Pesin unstable  manifolds may be different from strong unstable manifolds. We will denote the Pesin unstable manifolds by $W^u_{loc}(x)$ or $W^u(x)$, depending on whether they are local or global manifolds, respectively. We denote the strong unstable manifolds by $\cF^{u}(x)$.  We call the intrinsic topology of $\cF^{u}(x)$ the topology given by the restriction of the Riemannian metric of the ambient manifold to $\cF^{u}(x)$.

By (2) and a $C^1$ version of the Pesin theory  \cite[Theorem 3.11]{ABC}, we have that:
\begin{lemma}\label{l.stablemanifold}
For $\mu$ almost every $x$, there is an open set under the  intrinsic topology, $W_{loc}^u(x)\subset \cF^{cu}(x)$ containing $x$, such that
for every $y\in W_{loc}^u(x)$, $$\lim_{n\to \infty} d(f^{-n}(y),f^{-n}(x))=0.$$
\end{lemma}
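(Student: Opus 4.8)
\textbf{Proof plan for Lemma~\ref{l.stablemanifold}.} The statement is essentially a translation of the $C^1$ Pesin theory for Pesin unstable manifolds into our partially hyperbolic setting, so the plan is to invoke the $C^1$ stable/unstable manifold theorem of Abdenur--Bonatti--Crovisier \cite[Theorem 3.11]{ABC} for the inverse dynamics $f^{-1}$ and then identify the resulting manifold as an open subset of $\cF^{cu}(x)$. First I would recall the dichotomy of exponents at a $\mu$-generic point: by ergodicity the Lyapunov spectrum is constant $\mu$-a.e., and it consists of the (negative) stable exponent $\log\|Tf|_{E^s}\|<0$, the center exponent $\lambda^c(\mu,f)$, which by hypothesis (2) is strictly positive, and the (positive) unstable exponent. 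Hence along $E^{cu}=E^c\oplus E^u$ the derivative of $f$ is nonuniformly expanding (all exponents in that direction are positive), equivalently $f^{-1}$ is nonuniformly contracting on $E^{cu}$. The $C^1$ Pesin theory then produces, for $\mu$-a.e.\ $x$, a $C^1$ embedded disk $W^u_{loc}(x)$ tangent at $x$ to $E^{cu}(x)$, of dimension $\dim E^{cu}=2$, such that $d(f^{-n}(y),f^{-n}(x))\to 0$ for all $y\in W^u_{loc}(x)$; this is exactly the contraction statement in the Lemma.

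It remains to see that $W^u_{loc}(x)$ sits inside the topological leaf $\cF^{cu}(x)$ and is open there in the intrinsic topology. For the inclusion, observe that any point $y$ with $d(f^{-n}(y),f^{-n}(x))\to 0$ in particular stays in a small tube around the backward orbit of $x$, and since $f$ is dynamically coherent (Proposition~\ref{p.coherent}, via Potrie) with the center-unstable foliation $\cF^{cu}$ being a genuine $f$-invariant topological foliation whose leaves are backward-contracted on compact pieces only if they lie in the same leaf --- more precisely, the stable holonomy separates distinct $\cF^{cu}$-leaves at a definite rate --- the backward asymptotics force $y\in\cF^{cu}(x)$. A clean way to phrase this: if $y\notin\cF^{cu}(x)$ then, using the global product structure (Proposition~\ref{p.product}), the lift $\tilde y$ lies on a strong stable leaf meeting $\tilde\cF^{cu}(\tilde x)$ at a point $\tilde x'\neq\tilde x$ at positive distance, and strong stable leaves are uniformly expanded by $f^{-1}$, so $d(f^{-n}(y),f^{-n}(x))$ cannot go to zero. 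For openness: $W^u_{loc}(x)$ is a $2$-dimensional $C^1$ disk contained in the $2$-dimensional leaf $\cF^{cu}(x)$ and tangent to it at $x$; by invariance of domain (or simply because a $C^1$ submanifold of equal dimension containing an interior point is open) it is an open neighborhood of $x$ inside $\cF^{cu}(x)$ in the intrinsic topology.

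The main obstacle is the second part, namely rigorously pinning $W^u_{loc}(x)$ inside the topological center-unstable leaf. The $C^1$ Pesin manifold is defined purely dynamically (as a graph over $E^{cu}(x)$ obtained from a fixed-point/graph-transform argument in \cite{ABC}) and a priori only knows about the exponents, not about the coherence of the splitting into a foliation; one has to feed in dynamical coherence and the uniform expansion of $\cF^s$ under $f^{-1}$ to exclude points that backward-converge to the orbit of $x$ but leave $\cF^{cu}(x)$. I expect the cleanest route is via the global product structure on the universal cover together with the uniform transversality estimate in Lemma~\ref{l.uniformintersection}: these give a definite lower bound on the rate at which the $\cF^s$-coordinate of a point off $\cF^{cu}(x)$ grows under backward iteration, contradicting $d(f^{-n}(y),f^{-n}(x))\to 0$. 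Everything else is a direct citation of \cite[Theorem 3.11]{ABC} combined with hypothesis (2).
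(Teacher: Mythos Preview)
Your core approach---invoking \cite[Theorem~3.11]{ABC} for $f^{-1}$ using hypothesis (2)---is exactly what the paper does; in fact the paper's entire ``proof'' of this lemma is that single citation, with no further argument.

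Where you diverge is in treating the inclusion $W^u_{loc}(x)\subset\cF^{cu}(x)$ as a genuine obstacle requiring a separate dynamical argument. The paper regards this as implicit in the citation: in the setting of a dominated splitting $E^s\oplus E^{cu}$, the graph-transform/cone construction in \cite{ABC} produces a local manifold tangent to the $E^{cu}$-cone at every point, and since $E^{cu}$ is uniquely integrable here (Proposition~\ref{p.coherent}), that manifold is forced to lie in the leaf $\cF^{cu}(x)$. So the shortest route is ``tangent to $E^{cu}$ everywhere $+$ unique integrability $\Rightarrow$ contained in the leaf,'' rather than the backward-orbit separation argument you outline.

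Your alternative route is not wrong, but as written it has a small slip and an unstated ingredient. The slip: when you write ``meeting $\tilde\cF^{cu}(\tilde x)$ at a point $\tilde x'\neq\tilde x$,'' what you need is $\tilde y\neq\tilde x'$ (positive $\tilde\cF^s$-distance between $\tilde y$ and its $\tilde\cF^s$-projection to $\tilde\cF^{cu}(\tilde x)$), not $\tilde x'\neq\tilde x$. The unstated ingredient: from $d_{\tilde\cF^s}(\tilde f^{-n}(\tilde y),\tilde f^{-n}(\tilde x'))\to\infty$ you want to conclude that $\tilde f^{-n}(\tilde y)$ cannot stay close to \emph{any} point of $\tilde\cF^{cu}(\tilde f^{-n}(\tilde x))$, in particular not to $\tilde f^{-n}(\tilde x)$; this requires that lifted strong stable leaves are quasi-isometrically embedded (so leaf distance controls ambient distance to the transverse leaf), which is true for DA diffeomorphisms on $\TT^3$ but should be cited. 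With those two fixes your argument goes through, though it is considerably longer than what the paper intends.
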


\subsection{Global unstable manifold}
Recall that for any point $x\in M$, its global Pesin unstable manifold is defined by
$$W^u(x)=\{y;\lim_{n\to \infty} d(f^{-n}(y),f^{-n}(x))=0\}.$$
Remember that $\cF^{cu}$ is the invariant foliation tangent to $E^{cu}$ (see Subsection \ref{subsection.dynamical.coherence}). 
Our main result of this section is the following:

\begin{proposition}\label{p.sizeofstablemanifold} Let $f\in\cD^{1}(A)$ and $\mu$ be an invariant measure satisfying (1) to (3). Then for $\mu$-almost every point $x$,  $\cF^{cu}(x)=W^u(x)$. 
\end{proposition}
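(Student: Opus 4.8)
The plan is to exploit the global product structure (Proposition~\ref{p.product}) together with the classification of large-entropy measures of DA diffeomorphisms. We always have $W^u(x)\subset \cF^{cu}(x)$, so the content is the reverse inclusion, and by Lemma~\ref{l.stablemanifold} we know that for $\mu$-a.e.\ $x$ the Pesin unstable manifold $W^u_{loc}(x)$ is an \emph{open} subset of $\cF^{cu}(x)$ in the intrinsic topology. Since $W^u(x)=\bigcup_{n\ge 0} f^n(W^u_{loc}(f^{-n}x))$ and $\cF^{cu}$ is $f$-invariant, $W^u(x)$ is an open, $f$-saturated subset of the leaf $\cF^{cu}(x)$; to show it is all of $\cF^{cu}(x)$ it suffices to show it is also closed in the leaf, i.e.\ that $W^u(x)$ is an open and closed subset of the connected surface $\cF^{cu}(x)$. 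Equivalently, I will show that the "defect set" $\cF^{cu}(x)\setminus W^u(x)$, if nonempty, leads to a contradiction.

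The first main step is to upgrade Lemma~\ref{l.stablemanifold} to a \emph{uniform} statement along generic orbits: using the Pliss/hyperbolic-times machinery (as indicated in the structure-of-proof section, cf.\ Lemma~\ref{l.hyperbolictime}) applied to the ergodic measure $\mu$, which has $\lambda^c(\mu,f)>0$ and hence satisfies Pesin's entropy formula through $\G(f)$-membership arguments, there is $\delta>0$ and a positive-frequency set of times $n_k\to\infty$ at which $f^{-n_k}x$ is a "hyperbolic time" and $W^u_{loc}(f^{-n_k}x)$ contains a $cu$-disk of intrinsic radius $\delta$ around $f^{-n_k}x$, with uniform backward contraction. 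Pushing forward by $f^{n_k}$ and using that $f$ expands $E^{cu}$-volume (Lemma~\ref{l.cuexponents}), the images $f^{n_k}(W^u_{loc}(f^{-n_k}x))$ are $cu$-disks inside $W^u(x)$ whose intrinsic diameter grows; more precisely, they exhaust larger and larger balls inside $\cF^{cu}(x)$ centered at points of the $\mu$-generic orbit of $x$. Combined with recurrence of the orbit, this forces $W^u(x)$ to contain arbitrarily large intrinsic balls around $x$ itself, hence $W^u(x)=\cF^{cu}(x)$ — \emph{provided} the exhaustion is not obstructed by the center direction collapsing the pushed-forward disks.

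The heart of the matter, and the step where the DA-specific results of \cite{VY2} enter, is controlling the center direction inside $\cF^{cu}(x)=\bigcup_{y\in\cF^c(x)}\cF^u(y)$ (Corollary~\ref{c.csproduct}): a priori $W^u(x)$ could be a union of full strong-unstable leaves over only a bounded center segment, i.e.\ $W^u(x)$ might fail to spread along $\cF^c$. Here one uses that $h_\mu(f)>\log\lambda_u$ together with Proposition~\ref{p.isomophic} (so $\phi$ is $\mu$-a.e.\ injective, hence $\mu$ gives zero mass to the set where $\phi^{-1}$ is a nondegenerate center segment) and Proposition~\ref{p.centerlinear} (the pushed-forward measure has positive partial center entropy $h_{\phi_*\mu}(A,\cF^c_A)\ge h_\mu(f)-\log\lambda_u>0$). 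By Proposition~\ref{p.noatom} applied to the expanding center foliation of $A$, a full-measure set meets $\cF^c_A$-leaves in uncountable sets; transporting this back through the semiconjugacy $\phi$ (injective a.e., center-leaf-preserving) shows a full-$\mu$-measure set meets $\cF^c_f$-leaves in uncountable, in particular non-discrete, sets, so the generic orbit visits a center-dense family of points of $\cF^c(x)$. Feeding this into the exhaustion argument of the previous paragraph, $W^u(x)$ contains $\cF^u(y)$ for a dense set of $y\in\cF^c(x)$, and since $W^u(x)$ is open in the leaf it then equals $\cF^{cu}(x)$.

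The step I expect to be the main obstacle is precisely this last transport argument: ensuring that the positivity of partial center entropy "upstairs" for $A$ genuinely forces the Pesin unstable set of $f$ to spread along \emph{entire} center leaves rather than bounded center sub-segments, and doing so for $\mu$-a.e.\ point simultaneously (so that the exceptional null set can be discarded). This requires carefully matching the $\mu$-subordinate partitions along $\cF^c_f$ with those along $\cF^c_A$ under $\phi$, using Corollary~\ref{c.preserceentropy} and Proposition~\ref{p.coherent}(b) to control the fibers of $\phi$, and then invoking the "uncountable intersection with leaves" clause of Propositions~\ref{p.noatom} and Theorem~\ref{thm.positive}. Once that is in place, openness (Lemma~\ref{l.stablemanifold}) plus connectedness of $\cF^{cu}(x)$ closes the argument.
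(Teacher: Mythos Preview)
Your exhaustion argument has a genuine gap. Growth of $E^{cu}$-volume under forward iteration does not control the \emph{center} diameter of $f^{n_k}(W^u_{loc}(f^{-n_k}x))$: since $E^c$ is not uniformly expanding for $f$, the pushed-forward disks may stretch only along $E^u$ while their center extent stays bounded. Your proposed fix via Propositions~\ref{p.noatom} and~\ref{p.centerlinear} does not close this: ``uncountable intersection with center leaves'' does not give density, and even if you had $W^u(x)\supset\cF^u(y)$ for a dense set of $y\in\cF^c(x)$, openness of $W^u(x)$ in $\cF^{cu}(x)$ would not force equality---an open dense subset of a connected manifold need not be the whole manifold. (Incidentally, the appeals to Lemma~\ref{l.cuexponents} and to $\G(f)$-membership are illegitimate here: the proposition assumes only $f\in\cD^1(A)$ and hypotheses (1)--(3), not the standing hypothesis of Theorem~\ref{m.boundedpotention}.)

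The missing idea, which is the heart of the paper's proof, is to transfer the center-growth problem through the semiconjugacy $\phi$ to the linear map $A$, where center expansion \emph{is} uniform (factor $\lambda_c>1$). One first restricts to a positive-measure set $\Lambda_0$ on which the local center segment $\cF^c_{f,r_0}(x)\subset W^u(x)$ has uniform size $r_0$, and then proves (Lemma~\ref{l.zeroinfty}) that for $\mu$-a.e.\ $x\in\Lambda_0$ the $\phi$-image of this segment has length at least some $1/m>0$. This is where Propositions~\ref{p.isomophic}, \ref{p.centerlinear} and~\ref{p.noatom} actually enter, but through a counting argument rather than a density argument: if $\phi$ collapsed the right half-segment at $y$, the collapsed segments for distinct such $y$ would be pairwise disjoint (by a.e.\ injectivity of $\phi$ on $\Lambda$), hence at most countably many per center leaf; since $\phi_*\mu$ has non-atomic center conditionals, the bad set is $\mu$-null. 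Once the $\phi$-image has uniform size $1/m$, Poincar\'e recurrence to $\Lambda_m$ along the backward orbit together with the uniform expansion of $A$ give $\phi$-images of length $\lambda_c^{n_i}/m\to\infty$, forcing $\cF^c_f(x)\subset W^u(x)$; then $\cF^{cu}(x)\subset W^u(x)$ follows from Corollary~\ref{c.csproduct}.
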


\begin{proof} 

 First we will see that it is enough to show that $ \cF^{cu}(x) \subset W^u(x)$ for $\mu$-almost every $x$. In general, we can get the other inclusion from Lemma \ref{l.stablemanifold} but, in this particular case, $\cF^{cu}(x)=W^u(x)$ can be obtained from the fact that $\cF^{cu}(x)$ is a complete immersed submanifold. Indeed, assume that $ \cF^{cu}(x) \subset W^u(x)$. On the one hand, since $\cF^{cu}(x)$ is a surface without boundary, it is an open subset of $W^u(x)$ with the topology induced by the restriction of the ambient Riemannian metric (observe that both manifolds have the same dimension for $\mu$-almost every point $x$). On the other hand, since $\cF^{cu}(x)$ is complete, its boundary as a subset of $W^u(x)$ is empty. The fact that $W^u(x)$ is connected implies $\cF^{cu}(x)=W^u(x)$.
 
 Now we proceed to the prove that   $ \cF^{cu}(x) \subset W^u(x)$ for $\mu$-almost every $x$.  

By Corollary~\ref{c.csproduct}, for any point $x\in \TT^3$,
$$\cF^{cu}_f(x)=\cup_{y\in \cF^c_f(x)} \cF^u_f(y).$$
Hence we only need to show that for $\mu$-almost every
point $x$, $\cF^c_f(x)$ is contained in the unstable manifold of $x$.

The center foliation of the linear Anosov diffeomorphism $A$ is orientable. Also, for every point $x\in \TT^3$ the pre-image of $x$ under the semi-conjugacy $\phi$ is either a point or a connected
center segment of $f$, see (b) of Proposition~\ref{p.coherent}. Therefore the orientation of the center foliation of $A$ induces an orientation on the center foliation of $f$. Once this orientation is fixed, we choose \emph{left} and \emph{right} sides with respect to it.
We denote by $\cF^{c,i}_f(x)$ ($i=right,left$) the points of $\cF^c_f(x)$ which are located on the right and left of
$x$ respectively. Next, we will show that for $\mu$-almost every point $x$, $\cF^{c,right}_f(x)$
belongs to the unstable manifold of $x$. The proof for the left side is similar.

Recall that $\phi$ is the semiconjugacy discussed in Section~\ref{subsection.dynamical.coherence}. By Proposition~\ref{p.isomophic} and Lemma~\ref{l.stablemanifold}, we may take a full $\mu$-measure
subset $\Lambda$ such that for every $x\in \Lambda$:
\begin{itemize}
\item $\phi^{-1}(\phi(x))=x$;

\item $W^u(x)$ contains an open neighborhood of $x$ inside $\cF^c_f(x)$.
\end{itemize}

Now we take a compact subset $\Lambda_0\subset \Lambda$ and $r_0>0$ such that for any $x\in \Lambda_0$,
$\cF^c_{f,r_0}(x)\subset W^u(x)$, where $\cF^c_{f,r_0}(x)$ denotes the ball inside the center leaf $\cF^c_f(x)$
with center $x$ and radius $r_0$. We may take $r_0$ small enough such that  $\mu(\Lambda_0)>0$. Indeed, if $\Lambda_{r}$ is the set of $x\in\Lambda$ such that 
$\cF^c_{f,r}(x)\subset W^u(x)$, then we have $\bigcup_{r}\Lambda_{r}=\Lambda$, so there is $r_{0}>0$ such that $\mu(\Lambda_{r_{0}})>0$. There is a compact set $\Lambda_{0}\subset\Lambda_{r_{0}}$ whose measure approximates $\mu(\Lambda_{r_{0}})$ as much as we wish.
We may further assume the set $\Lambda_0$ is contained in a compact center
foliation box $B$.

Next, we will prove that there is a positive measure subset of $\Lambda_0$, on which $\phi(\cF^{c,right}_{f,r_0}(x))$ has uniform size in $W^u_A(\phi(x))$.

For this purpose, we write $\Lambda_n\subset \Lambda_0$ the set of points such that for any $x\in \Lambda_n$,
such that $\phi(\cF^{c,right}_{f,r_0}(x))$ contains a segment of $\cF^c_{A}(x)$ with length strictly larger than $\frac{1}{n}$.
By the continuity of the center foliation, $\Lambda_n$ is (relatively) open inside $\Lambda_0$, hence measurable.
By the definition, $\Lambda_1\subset \Lambda_2\subset \cdots$, and then the complement of  $\Lambda_\infty=\cup_{n> 0} \Lambda_n$ is a compact set.
\begin{lemma}\label{l.zeroinfty}
$\mu(\Lambda_0\setminus \Lambda_\infty)=0$.
\end{lemma}
\begin{proof}
In order to prove the lemma, it suffices to show that
$$(\phi)_*\mu (\phi(\Lambda_0\setminus \Lambda_\infty))=0.$$
This is because $\phi$ is an isomorphism between $\mu$ and $\phi_{*}\mu$.\par 	
For each point $y\in \Lambda_0\setminus \Lambda_\infty$, denote by $l_{right}(y)\subset \cF^{c,right}_f(y)$ the segment
with length $r_0$ and having $y$ as an endpoint. Then by the choice of $\Lambda_0$,
$l_{right}(y)\subset W^u(y)$. Moreover, since $y\in\Lambda_0\setminus \Lambda_\infty$, the image of $l_{right}(y)$ under the semi-conjugacy $\phi$ must be a single point, which is $\phi(y)$.

Now we claim that for
any two different points $y_1,y_2\in \Lambda_0\setminus \Lambda_\infty$, $l_{right}(y_1)$ and $l_{right}(y_2)$
are disjoint. This is because, if $l_{right}(y_1)\cap l_{right}(y_2)\neq \emptyset$, 
$$\phi(l_{right}(y_1))=\phi(y_1), \mbox{ and }\phi(l_{right}(y_2))=\phi(y_2)$$
must have non-trivial intersection,
which implies that $\phi(y_1)=\phi(y_2)$.  Since 
$\phi\mid_\Lambda$ is bijective, we arrived to a contradiction.

Recall that any one-dimensional segment contains at most countable many disjoint non-trivial intervals. By the claim above,
we conclude that in the center foliation box $B$, the intersection of $\Lambda_0\setminus \Lambda_\infty$ with each center leaf is at most countable. Since the semi-conjugacy maps every center leaf of $f$ to a center leaf of
$A$ (Proposition~\ref{p.coherent}), $\phi(\Lambda_0\setminus \Lambda_\infty)$ intersects every center leaf of $A$ at countably many points.

Note that $\phi_*$ preserves metric entropy (Corollary~\ref{c.preserceentropy}). By the hypothesis of Proposition~\ref{p.sizeofstablemanifold},
$$h_{\phi_*\mu}(A)>\log\lambda_{u}.$$
By Proposition~\ref{p.centerlinear}, the partial entropy along the center foliation $\cF_A^{c}$ of $A$ is positive. 
Then it follows from Proposition~\ref{p.noatom} that the disintegration of $(\phi)_* \mu$ along the center leaf is continuous (in the sense that it contains no atoms), which implies that $(\phi)_*\mu (\phi(\Lambda_0\setminus \Lambda_\infty))=0$, as we claimed.
The proof of this lemma is complete.
\end{proof}

Let us continue the proof of Proposition~\ref{p.sizeofstablemanifold}. Since $\Lambda_\infty$ has full measure in $\Lambda_0$, we can take $m$ sufficiently large, such that $\mu(\Lambda_{m})>0$. By the definition of $\Lambda_{m}$, we have that for $y\in\Lambda_{m}$, the image of $l_{right}(y)$ under $\phi$ has size larger than $1/m$. We claim that the whole right branch of the center leaf, $\cF^c_{right}(x)$,  is contained in $W^u(x)$ for $\mu$-almost every $x\in \Lambda_m$. Although so far our argument has been local, no more than this is required. Indeed, the set of points $x$ for which $\cF^c_{right}(x)\subset W^u(x)$ is $f$-invariant, and $\Lambda_m$, a positive $\mu$-measure set, is contained in it. Ergodicity of $\mu$ implies it has full $\mu$-measure. 

The proof of the claim uses the uniform expansion of $W^u_A$. By Poincar\'e recurrence theorem, for $\mu$ almost every point $x\in \Lambda_{m}$, there is a sequence
of integers $0<n_1<n_2<\cdots$ such that $f^{-n_i}(x)\in \Lambda_{m}$ for any $i\in \mathbb{N}$.
Because $l_{right}(f^{-n_i})(x)\subset W^u(f^{-n_i}(x))$, $f^{n_i}(l_{right}(f^{-n_i})(x))\subset W^u(x)$.
By the semi-conjugacy, $$\phi(f^{n_i}(l_{right}(f^{-n_i})(x)))=A^{n_i}(\phi(l_{right}(f^{-n_i})(x)))$$
satisfies
\begin{align*}
\length(A^{n_i}(\phi(l_{right}(f^{-n_i})(x))))>
\lambda_{c}^{n_i} \length(\phi(l_{right}(f^{-n_i})(x))))\geq \lambda_{c}^{n_i} \frac{1}{m},
\end{align*}
which is unbounded in $i$. Hence $$\phi(\bigcup_{i}f^{n_i}(l_{right}(f^{-n_i})(x)))=\bigcup_{i}\phi(f^{n_i}(l_{right}(f^{-n_i})(x)))=\cF^c_{A,right}(\phi(x)).$$
This shows that $$\bigcup_{i}f^{n_i}(l_{right}(f^{-n_i})(x))=
\cF^c_{f,right}(x)\subset W^u(x).$$ The proof of Proposition~\ref{p.sizeofstablemanifold} is complete.
\end{proof}

\begin{remark}
	The assumption (3) in Proposition~\ref{p.sizeofstablemanifold} is likely a sharp condition. See the discussion in~\cite{VY2}.
\end{remark}

\section{Robustly minimal stable foliation\label{s.sminimal}}

Finally we are ready to prove Theorem~\ref{m.boundedpotention}. Due to the non-uniform expansion on $\cF^{cu}$, we need the notion of hyperbolic times.  These are the times when sufficient hyperbolicity is achieved along a given orbit. We will prove that, for any given open set $U$, there is always some point $x\in U$ whose forward iteration (up to the hyperbolic times) has large unstable manifold. Then Propositions~\ref{p.sizeofstablemanifold} and \ref{p.product} show that under further iteration this unstable manifold will become an $s$-section, intersecting every stable leaf.  Ergodic measures  in $\G(g)$ satisfy the hyphotesis of Proposition~\ref{p.sizeofstablemanifold}   because they have large entropy by Lemma \ref{l.robustuniform}. As a result,  it will be enough to show that when such hyperbolicity is achieved, the point itself must be close to the support of some measure in $\G(g)$.

Let $\cU$ be the $C^1$ neighborhood of $f$ and $a>0$ be the constant provided by Lemma~\ref{l.robustuniform}. Let  $g\in \cU$
be a $C^1$ diffeomorphism. Given $U$  any open set of the ambient manifold $\TT^3$,
it suffices  to show that every stable leaf has nonempty intersection with $U$.

By Proposition~\ref{p.meGspace}, there is a full volume subset $\Gamma\subset U$ such that for any $x\in \Gamma$,
any limit of the sequence of $\frac{1}{n}\sum_{i=0}^{n-1}\delta_{g^i(x)}$ belongs to $\G(g)$. Fix such an $x\in \Gamma$, then
$$\limsup\frac{1}{n}\sum_{i=1}^{n} \log \|Tg^{-1}\mid_{E_g^{c}(g^{i}(x))}\|<-a. $$ 
Because all the subbundles $E^i$ ($i=s,c,u$) are one-dimensional, after changing the metric, we may assume that they are orthogonal. This means that, after changing the constant $a$, we have:
\begin{equation}\label{eq.eventurallyhyperbolic}
\limsup\frac{1}{n}\sum_{i=1}^{n} \log \|Tg^{-1}\mid_{E_g^{cu}(g^{i}(x))}\|<-a.
\end{equation}

\begin{definition}\label{df.hyperbolictime}
For $b>0$, we say that $n$ is a $b$-\emph{hyperbolic time} for a point $x$, if
$$\frac{1}{k}\sum_{j=n-k+1}^n\log\|Tg^{-1}\mid_{E^{cu}(h^j(x))}\|\leq -b, \text{ for any } 0<k \leq n.$$
\end{definition}

By the Pliss Lemma (see \cite[Lemma 11.5]{BDVnonuni})), one can show that the set of $b$-hyperbolic times have positive density:

\begin{lemma}\label{l.largehyperbolictime}
There is $\rho=\rho(a,\cU)>0$ such that for any $g\in \cU$ and $x$ satisfying \eqref{eq.eventurallyhyperbolic},
there are integers $0<n_1<n_2<\cdots$ which are $\frac{a}{2}$-hyperbolic times for $x$. Moreover,
$$\liminf \frac{\#\{i:n_i\leq n\}}{n}\geq \rho.$$
\end{lemma}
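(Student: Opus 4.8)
The plan is to read this as a direct application of the Pliss Lemma to the sequence
$$a_i = a_i(x,g) := -\log\bigl\|Tg^{-1}\mid_{E^{cu}(g^i(x))}\bigr\|, \qquad i\geq 1.$$
First I would note that this sequence is uniformly bounded: after shrinking $\cU$ we may assume it is a $C^1$-bounded neighborhood of $f$, so that $\|Tg^{-1}\|$ and $\|Tg\|$ are uniformly bounded over $g\in\cU$; since $E^{cu}$ is invariant and $Tg^{-1}$ invertible, this gives a constant $A=A(\cU)>0$, independent of $g\in\cU$ and of the orbit, with $|a_i|\leq A$ for all $i$. Next I would rewrite the standing hypothesis \eqref{eq.eventurallyhyperbolic}: since $\limsup_n\frac1n\sum_{i=1}^n\log\|Tg^{-1}\mid_{E^{cu}(g^i(x))}\| < -a$ means that $\frac1n\sum_{i=1}^n\log\|Tg^{-1}\mid_{E^{cu}(g^i(x))}\| < -a$ for all large $n$, we obtain
$$\liminf_{n\to\infty}\frac1n\sum_{i=1}^n a_i \;\geq\; a \;>\; \tfrac a2.$$

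With this in hand I would invoke the Pliss Lemma \cite[Lemma 11.5]{BDVnonuni} with upper bound $A$, target level $c_1=\frac a2$ and mean level $c_2=a$: it produces infinitely many integers $0<n_1<n_2<\cdots$ such that
$$\sum_{j=m+1}^{n_i} a_j \;\geq\; \tfrac a2\,(n_i-m)\qquad\text{for all } 0\leq m<n_i \text{ and all } i,$$
and such that the set $\{n_i\}$ has lower density at least $\rho := \dfrac{a/2}{A-a/2}=\dfrac{a}{2A-a}$. Because both $A$ and $a$ (the latter being the constant furnished by Lemma~\ref{l.robustuniform}) depend only on $\cU$, the density $\rho=\rho(a,\cU)$ is uniform over all $g\in\cU$ and all $x$ satisfying \eqref{eq.eventurallyhyperbolic}, which is exactly the uniformity asserted in the statement. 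Passing from the $\liminf$ of the mean to the $\liminf$ of the density of hyperbolic times uses the standard infinite version of the Pliss argument (apply the finite version to prefixes of length $N$ and use that being a hyperbolic time at $n\leq N$ only involves the prefix up to $n$).

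Finally I would translate the Pliss conclusion into Definition~\ref{df.hyperbolictime}. Writing $k=n_i-m$, so that $k$ runs through $1,\dots,n_i$ as $m$ runs through $0,\dots,n_i-1$, the displayed inequality becomes $\sum_{j=n_i-k+1}^{n_i} a_j \geq \frac a2\,k$, i.e.
$$\frac1k\sum_{j=n_i-k+1}^{n_i}\log\bigl\|Tg^{-1}\mid_{E^{cu}(g^j(x))}\bigr\| \;\leq\; -\tfrac a2\qquad\text{for all }1\leq k\leq n_i,$$
so each $n_i$ is an $\frac a2$-hyperbolic time for $x$, and $\liminf_n \#\{i:n_i\leq n\}/n \geq \rho$. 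There is no serious obstacle here; the only two points that require care are (i) verifying that the upper bound $A$ on the $a_i$ is genuinely uniform over the neighborhood $\cU$, which is precisely what forces $\rho$ to be independent of $g$, and (ii) reconciling the index conventions of the textbook Pliss Lemma with those of Definition~\ref{df.hyperbolictime}.
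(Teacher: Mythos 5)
Your proof is correct and takes exactly the route the paper intends: the paper states this lemma with only the remark ``By the Pliss Lemma \cite[Lemma 11.5]{BDVnonuni}, one can show\ldots,'' and your argument simply fills in the omitted details (uniform bound $A$ over the $C^1$-bounded neighborhood $\cU$, translating $\limsup < -a$ of the logs into $\liminf > a$ of the $a_i$'s, applying Pliss with levels $c_1=a/2$, $c_2=a$, and matching the resulting indices to Definition~\ref{df.hyperbolictime}).
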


Write $\nu_{n_m}=\frac{1}{n_m}\sum_{i=0}^{n_m-1} \delta_{g^i(x)}$. We may assume that
$$\lim_m \nu_{n_m}=\lim_m \frac{1}{n_m}\sum_{i=0}^{n_m-1}\delta_{g^i(x)}=\nu.$$
By Proposition~\ref{p.physical}, we have $\nu\in \G(g)$.

For each $m$, define the set
$$\Gamma_m=\{g^i(x);\; n_m \rho/2<i<n_m \text{ is a hyperbolic time of } x \}.$$

By Lemma~\ref{l.largehyperbolictime}, $\liminf_m \nu_{n_m}(\Gamma_m)\geq\rho/2$. Take $\Gamma_0$ any Hausdorff limit
of the sequence $\Gamma_m$. For simplicity, passing to a subsequence if necessary, we may assume that $\lim \Gamma_m=\Gamma_0$.
Then $\nu(\Gamma_0)\geq \rho/2$. The measure $\nu$ could be non-ergodic.  However, we claim that there is an ergodic component $\nu'$ of $\nu$ such that $\nu^\prime(\Gamma_0)\geq \rho/2$. Otherwise  we would have that $\nu(\Gamma_0)< \rho/2$, a contradiction. So we take an ergodic component $\nu^\prime$ of $\nu$ such that $\nu^\prime(\Gamma_0)\geq \rho/2$. 
Since $\nu\in \G(g)$, by Proposition~\ref{p.meGspace} we can choose $\nu'$ in such a way that  $\nu^\prime\in G(g)$.

By Lemma~\ref{l.robustuniform}, $h_{\nu^\prime}(g)>\log \lambda_{u}$. Then Proposition~\ref{p.sizeofstablemanifold} shows that
for $\nu^\prime$-almost every point $y\in \Gamma_0$, $W_g^u(y)=\cF_g^{cu}(y)$. By Poincar\'e recurrence theorem,
we may also assume that the negative orbit of $y$ visits $\Gamma_0$ infinitely many times.

By Remark~\ref{r.uniformintersection}, there is $R=R_{y}$ such that $\cF^{cu}_{g,R}(y)$ intersects each stable leaf of $g$.
Because $W^u_g(y)=\cF^{cu}_g(y)$, there is a sufficiently large $s$ such that $g^{-s}(y)\in \Gamma_0$, and $g^{-s}(W^u_{g,R}(y))$
has arbitrarily small size, but also intersects every stable leaf of $g$.

On the other  hand, since $g^{-s}(y)\in \Gamma_0=\lim \Gamma_m$, there is a  subsequence $n_{i_m}$ of hyperbolic times of $x$, such that  $\Gamma_m \ni x_{n_{i_m}}:=g^{n_{i_m}}(x)\to g^{-s}(y)$ as $i\to\infty$.

Let $D$ be any two-dimensional $\C^1$ disk. We use $d_D(\cdot,\cdot)$ to denote the distance induced on $D$ by the restriction  of the Riemammian metric of $\TT^3$. We call {\em $1/2$ center-unstable cone}  the set of vector fields $v$ such that $\angle (v, E^{cu})<\frac{1}{2}$.
 Since the splitting $E^s\oplus E^{cu}$ is dominated, the vectors in the sub-bundle $E^{cu}$ are expanded exponentially faster than  vectors in the sub-bundle $E^s$. It then follows that:
\begin{lemma}\label{l.cucone}
Suppose $g\in\cU$ and $D$ is a disk tangent to the $1/2$ center-unstable cone, then $g(D)$ is also tangent to $1/2$ center-unstable cone.
\end{lemma}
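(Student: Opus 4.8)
The plan is to exploit the domination of the splitting $E^s\oplus E^{cu}$, which is uniform over the $C^1$-neighborhood $\cU$ (shrinking $\cU$ if necessary, by openness of domination). First I would fix constants: let $C\geq 1$ and $0<\tau<1$ be such that for every $g\in\cU$, every $x\in\TT^3$ and every $n\geq 0$,
$$\|Tg^{-n}\mid_{E^{cu}(g^n x)}\|^{-1}\cdot\|Tg^n\mid_{E^s(x)}\| \leq C\tau^n,$$
equivalently $\|Tg^n\mid_{E^s(x)}\|\cdot\|Tg^{-n}\mid_{E^{cu}(g^nx)}\|\leq C\tau^n$. In particular for $n=1$ one has a one-step domination constant $\tau<1$ (after possibly adapting the metric as was done above to make the bundles orthogonal). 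The point is that $g$ contracts directions transverse to $E^{cu}$ relative to $E^{cu}$ by a definite factor at each step.

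Next I would run the standard cone-field invariance computation. Write $\C^{cu}_{1/2}(x)=\{v\in T_x\TT^3:\angle(v,E^{cu}(x))\leq 1/2\}$, or more conveniently describe it via graphs: decompose $v=v^{cu}+v^s$ with $v^{cu}\in E^{cu}(x)$, $v^s\in E^s(x)$; then $v\in\C^{cu}_{1/2}(x)$ iff $\|v^s\|\leq \kappa\|v^{cu}\|$ for the corresponding slope $\kappa=\kappa(1/2)$. Applying $T_xg$ and using invariance of the bundles, $(T_xg\, v)^{cu}=T_xg\,v^{cu}$ and $(T_xg\,v)^s=T_xg\,v^s$, so the new slope satisfies
$$\frac{\|(T_xg\,v)^s\|}{\|(T_xg\,v)^{cu}\|}\leq \frac{\|Tg\mid_{E^s(x)}\|\cdot\|v^s\|}{\|(Tg\mid_{E^{cu}(x)})^{-1}\|^{-1}\cdot\|v^{cu}\|}\leq \tau\,\frac{\|v^s\|}{\|v^{cu}\|}\leq \tau\kappa<\kappa.$$
Hence $T_xg$ maps $\C^{cu}_{1/2}(x)$ strictly inside $\C^{cu}_{1/2}(gx)$ (indeed into a thinner cone). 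Since $D$ is tangent to the $1/2$ center-unstable cone means $T_zD\subset\C^{cu}_{1/2}(z)$ for every $z\in D$, and $T_{g(z)}g(D)=T_zg(T_zD)$, it follows immediately that $T_{g(z)}g(D)\subset\C^{cu}_{1/2}(g(z))$ for every $z\in D$, i.e.\ $g(D)$ is tangent to the $1/2$ center-unstable cone.

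There is no serious obstacle here; this is a textbook consequence of domination, and the only mild care needed is (i) to make sure $\cU$ is chosen small enough that the domination inequality with a uniform $\tau<1$ persists for all $g\in\cU$ — which is possible since domination is a $C^1$-open condition — and (ii) to keep track of the fact that "angle $<1/2$" translates to a slope bound and that the contraction of the slope under $Tg$ is governed precisely by the one-step domination ratio, which I would verify in the adapted metric where $E^s$ and $E^{cu}$ are orthogonal (as already arranged in the paragraph preceding the lemma). If one prefers the angle formulation directly, the same estimate shows $\tan\angle(Tg\,v, E^{cu}(gx))\leq \tau\tan\angle(v,E^{cu}(x))$, which again gives the stated invariance since $\tau<1$.
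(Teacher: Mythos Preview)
Your proof is correct and takes the same approach as the paper. In fact the paper gives no detailed argument at all: it simply notes that since $E^s\oplus E^{cu}$ is dominated, vectors in $E^{cu}$ are expanded exponentially faster than vectors in $E^s$, and then states the lemma as an immediate consequence. Your computation is precisely the standard cone-invariance calculation that justifies this, so you have supplied the details the paper omits.
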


More importantly, one sees sufficient backward contraction on a large size sub-disk of $g^k(D)$ when  $k$ is a hyperbolic time:

\begin{lemma}\cite[Lemma 2.7]{ABV}\label{l.hyperbolictime}
There is $\delta_1>0$ depending on $\cU$ and $a$ such that, for any diffeomorphism $g\in \cU$, given any $\C^1$ disk $D$ tangent to the $1/2$ center-unstable cone field, $x\in D$ and $n\geq 1$ an $a/2$-hyperbolic time for $x$, we have
$$d_{g^{n-k}(D)}(g^{n-k}(y),g^{n-k}(x))\leq e^{-ka/2} d_{g^n(D)}(g^n(x),g^n(y)),$$
for any point $y\in D$ with $d_{g^n(D)}(g^n(x),g^n(y))\leq \delta_1$.
\end{lemma}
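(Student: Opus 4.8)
The statement to prove is Lemma~\ref{l.hyperbolictime} (the hyperbolic-time contraction estimate), which is quoted from \cite[Lemma 2.7]{ABV}, so I would present a self-contained proof following the standard Alves--Bonatti--Viana argument adapted to our setting. The plan is to combine two ingredients: (i) the domination of the splitting $E^s\oplus E^{cu}$, which via Lemma~\ref{l.cucone} guarantees that all the forward images $g^j(D)$ remain tangent to the $1/2$ center-unstable cone field, so that along $g^j(D)$ the tangent map is uniformly comparable (up to a constant depending only on $\cU$ and the cone width) to $Tg\mid_{E^{cu}}$; and (ii) the defining inequality of an $a/2$-hyperbolic time, which controls the products $\prod_{j=n-k+1}^{n}\|Tg^{-1}\mid_{E^{cu}(g^j(x))}\|$ by $e^{-ka/2}$ for \emph{every} $1\le k\le n$.

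First I would fix a small $\delta_1>0$, to be chosen at the end, and given $y\in D$ with $d_{g^n(D)}(g^n(x),g^n(y))\le\delta_1$, consider the geodesic segment $\gamma_n$ in $g^n(D)$ joining $g^n(x)$ to $g^n(y)$, and its preimages $\gamma_{n-k}=g^{-k}(\gamma_n)\subset g^{n-k}(D)$. The core estimate is a bootstrap: assuming inductively that $\gamma_{n-j}$ stays inside a small tubular neighborhood (of radius, say, $\delta_1$) of $g^{n-j}(x)$ for $0\le j\le k$, one bounds $\operatorname{length}(\gamma_{n-k})$ by integrating $\|Tg^{-k}\mid_{T g^n(D)}\|$ along $\gamma_n$. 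Using tangency to the cone field and a uniform continuity/distortion estimate (the derivative $Tg$ restricted to the cone field varies slowly on scale $\delta_1$, with modulus of continuity uniform over $g\in\cU$), this product is comparable to $\prod_{j=n-k+1}^{n}\|Tg^{-1}\mid_{E^{cu}(g^j(x))}\|$ times a factor $e^{o(1)k}$ coming from distortion, which the hyperbolic-time condition controls by $e^{-ka/2}e^{o(1)k}$. Shrinking $\delta_1$ absorbs the distortion factor into, say, $e^{-ka/4}$, and I would simply relabel $a/4$ as the constant (or, as the statement permits, keep $a/2$ after having started from a slightly larger hyperbolicity exponent, exactly the kind of ``changing the constant $a$'' already invoked in \eqref{eq.eventurallyhyperbolic}). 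Finally one checks the induction closes: since each $\gamma_{n-k}$ has length at most $e^{-ka/2}\delta_1\le\delta_1$, it indeed remains in the tubular neighborhood, so the a priori assumption is self-consistent for all $0\le k\le n$.

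\textbf{Main obstacle.} The delicate point is the \emph{distortion control} that lets one pass from ``derivative along the disk $g^j(D)$ at points near $g^j(x)$'' to ``$\|Tg^{-1}\mid_{E^{cu}(g^j(x))}\|$ at the orbit point itself''. This requires: (a) that the disks $g^j(D)$ have tangent planes within the cone, uniformly (handled by Lemma~\ref{l.cucone}); (b) that on balls of radius $\delta_1$ the unit vectors in the cone have images under $Tg$ whose norms differ multiplicatively by at most $e^{\epsilon(\delta_1)}$ with $\epsilon(\delta_1)\to0$ as $\delta_1\to0$, uniformly over $g\in\cU$ --- this is where $C^1$-continuity of the derivative and compactness of $\TT^3$ and of $\cU$ (in a fixed $C^1$ ball) are used; and (c) that the accumulated distortion over $k$ steps, which is a product of $k$ such factors $e^{\epsilon(\delta_1)}$, stays below $e^{ka/4}$ --- this is automatic once $\epsilon(\delta_1)<a/4$. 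Everything else (the Pliss-type positivity of the density of hyperbolic times) is already in place via Lemma~\ref{l.largehyperbolictime}; here we only use the \emph{pointwise} defining property of an individual hyperbolic time. I would therefore devote the bulk of the write-up to stating cleanly the uniform distortion estimate and then to the straightforward induction on $k$.
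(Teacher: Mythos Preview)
The paper does not give its own proof of this lemma: it is simply quoted from \cite[Lemma~2.7]{ABV} and used as a black box in Section~\ref{s.sminimal}. Your proposal is a correct reconstruction of the standard Alves--Bonatti--Viana argument, and the bootstrap-on-$k$ scheme you outline (geodesic in $g^n(D)$, backward images, induction closing because each $\gamma_{n-k}$ has length $\le e^{-ka/2}\delta_1\le\delta_1$) is exactly how that lemma is proved.

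Two small points are worth tightening. First, you invoke ``compactness of $\cU$ (in a fixed $C^1$ ball)'' to get the modulus of continuity of $Tg$ uniform in $g\in\cU$; but $\cU$ is an \emph{open} $C^1$-neighborhood and need not be $C^1$-compact. The fix is easy: compare every $g\in\cU$ to the fixed $C^2$ map $f$ via $|\tilde\psi_g(z)-\tilde\psi_g(w)|\le 2\|\tilde\psi_g-\tilde\psi_f\|_{C^0}+|\tilde\psi_f(z)-\tilde\psi_f(w)|$, where $\tilde\psi_g=\log\|Tg^{-1}\!\mid_{E^{cu}_g}\|$; shrink $\cU$ so the first term is $<a/8$ and use uniform continuity of $\tilde\psi_f$ for the second. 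Second, your item~(b) handles the basepoint variation but does not explicitly address the discrepancy between $\|Tg^{-1}\|$ on a $2$-plane in the $1/2$-cone and $\|Tg^{-1}\!\mid_{E^{cu}}\|$ at the \emph{same} point; this introduces a fixed multiplicative constant $C>1$ per step, hence an unwanted $C^k$. The standard remedy (implicit in ABV) is that by domination $Tg^N$ maps the $1/2$-cone into an arbitrarily narrow cone for $N$ large, uniformly over $\cU$; so for $j\ge N$ the tangent planes of $g^j(D)$ are $\epsilon$-close to $E^{cu}$ and the per-step constant becomes $\le e^{a/8}$, while the first $N$ steps contribute only a bounded factor absorbed into $\delta_1$. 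With these two clarifications your write-up would be complete.
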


Take $D\subset U$ any two-dimensional disk which is tangent to the $1/2$ center-unstable cone and contains $x\in\Gamma$ as an interior point. By Lemma~\ref{l.cucone} and \ref{l.hyperbolictime}, for any sufficiently large $n_i$ which is $a/2$-hyperbolic time
for $x$, $D_{n_i}=g^{n_i}(D)$ is tangent to the $1/2$
center-unstable cone, and contains a sub-disk with center $x_{n_i}$ and radius $\delta_1$ with respect to the distance $d_{g^{n_i}(D)}$.

Since $\delta_1>0$ only depends on $a>0$ and $\mathcal U$, one can take a large enough $s$  such that $g^{-s}(W^u_{g,R}(y))$ is much smaller than $\delta_1>0$.
Then, because $x_{n_{i_m}}$ can be taken arbitrarily close to $g^{-s}(y)$, all the stable manifold of points of  $g^{-s}(W^u_{g,R}(y))$ intersect  $D_{n_{i_m}}$. Indeed, observe that the disks $D_{n_{i_m}}$ are uniformly transverse to $\cF^s_g$ and contain disks with  uniform  radius centered at $x_{n_{i_m}}$, while the diameter of $g^{-s}(W^u_{g,R}(y))$ can be taken arbitrary small. Then the continuity of the strong stable foliation implies that  all the stable manifolds of points of  $g^{-s}(W^u_{g,R}(y))$  intersect $D_{n_{i_m}}$ as stated.
Since $g^{-s}(W^u_{g,R}(y))$ intersects every stable leaf of $g$,   $D_{n_{i_m}}$ also does, and  so does $D$. The proof of our main result is complete.

\section{Examples\label{s.examples}}

In this section we will build the new examples mentioned in Theorem \ref{main.example}. For this we will first consider  the sequence of hyperbolic automorphisms $\{A_k\}$ on $\TT^3$ presented in \cite{PT}. In fact we will take their inverses. Each automorphism in this sequence has three distinct positive eigenvalues. The center eigenvalues are greater than one and tend to one as $k\to\infty$. This means that the central expansion is getting arbitrarily weak.  This  will allow us to make a perturbation on a sufficiently long center segment so that the new diffeomorphism is the identity on this segment, and  has center expansion elsewhere.  We have to make the perturbation taking two precautions. On the one hand, we want the perturbed diffeomorphism to be partially hyperbolic. On the other hand, we want the expansion in the strong unstable direction not to be affected in order to  apply  Theorem \ref{m.boundedpotention}. In this way, we will obtain the robust minimality of  the strong stable foliation. 
 
In addition, we need the strong unstable foliation also to be robustly minimal. To achieve this we require that the center segments in which the perturbations occur are long enough to  apply the arguments of \cite{BDU}. This is possible thanks to a density  estimate for long center segments given by Lemma \ref{l.separate}.
 

\subsection{The hyperbolic automorphisms in~\cite{PT} and a change of coordinates}
For each $k\in \mathbb{Z}$, we define the linear automorphism $A_k : \TT^3\longrightarrow \TT^3$
induced by the integer matrix
$$A_k=\left(
  \begin{array}{ccc}
    k-1 & -1 & -1 \\
    1 & 1 & 0 \\
    1 & 0 & 0 \\
  \end{array}
\right).$$
For simplicity of the notation we identify both the automorphism and the linear matrix by $A_k$.

The inverse of $A_k$ is 
$$\left(
  \begin{array}{ccc}
    0 & 0 & 1 \\
    0 & 1 & -1 \\
    -1& -1 & k \\
  \end{array}
\right).$$
which is the matrix discussed in Section 4 of \cite{PT}.
The characteristic polynomial of $A_k$ is 
$$p_k(x)=x^3-kx^2+(k+1)x-1=0.$$

Moreover,
\begin{lemma}\cite[Section 4]{PT}  \label{p.eigenvalueandvector}
For all $k\geq 5$, $A_k$ has real eigenvalues $$0<\lambda^s_k<\frac{1}{k}<1<\lambda^c_k<\lambda^u_k$$
which satisfy $\lambda^s_k\searrow 0$, $\lambda^c_k\searrow 1$ and $\lambda^u_k\nearrow \infty$ as $k\to \infty$. The eigenvectors
are $$v^\sigma_k=\left(1,\frac{1}{\lambda^\sigma_k-1},\frac{1}{\lambda^\sigma_k}\right), \sigma=s,c,u.$$
\end{lemma}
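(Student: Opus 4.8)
The plan is to argue entirely from the monic characteristic polynomial $p_k(x) = x^3 - kx^2 + (k+1)x - 1$ (whose coefficients encode $\operatorname{tr}A_k = k$, the sum of the $2\times2$ principal minors $=k+1$, and $\det A_k = 1$), and to locate its roots by the intermediate value theorem. First I would evaluate $p_k$ at a few points: $p_k(0) = -1 < 0$, $p_k(1/k) = 1/k^3 > 0$, $p_k(1) = 1 > 0$, $p_k(2) = 9 - 2k < 0$ for every $k \geq 5$, and $p_k(x) \to +\infty$ as $x \to +\infty$. This yields a root in each of the pairwise disjoint intervals $(0,1/k)$, $(1,2)$, $(2,+\infty)$; since $\deg p_k = 3$ these are all the roots, hence all three eigenvalues are real and simple, there is none in $[1/k,1]$, and labelling in increasing order gives $0 < \lambda^s_k < \tfrac1k < 1 < \lambda^c_k < 2 < \lambda^u_k$, which is the stated chain of inequalities.

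Next I would invoke Vieta's relations $\lambda^s_k + \lambda^c_k + \lambda^u_k = k$ and $\lambda^s_k\lambda^c_k\lambda^u_k = 1$. The first, with the bounds above, gives $\lambda^u_k = k - \lambda^s_k - \lambda^c_k \geq k - 2 - \tfrac1k \to \infty$; the second gives $\lambda^s_k = (\lambda^c_k\lambda^u_k)^{-1} \leq (k - 2 - \tfrac1k)^{-1} \to 0$. For the center eigenvalue I would fix $c > 1$ and expand $p_k(1 + c/k) = (1-c) + O(1/k)$, which is $<0$ for $k$ large; since $p_k > 0$ on $(\lambda^s_k,\lambda^c_k)$ and $1+c/k<2$ for $k$ large, this forces $1 < \lambda^c_k < 1 + c/k$, and letting $c \downarrow 1$ yields $\lambda^c_k \to 1$.

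For the monotone convergence I would solve $p_k(\lambda)=0$ for the parameter, $k = g(\lambda) := (\lambda^3+\lambda-1)/(\lambda^2-\lambda)$ for $\lambda \notin\{0,1\}$, and check the sign of $g'$ on each of the three branches $(0,1/k)$, $(1,2)$, $(2,\infty)$ — equivalently apply the implicit function theorem, $\frac{d\lambda}{dk} = -\lambda(1-\lambda)/p_k'(\lambda)$, using that for a monic cubic the sign of $p_k'$ at the smallest, middle, and largest simple root is $+,-,+$ — to conclude that $\lambda^s_k$ and $\lambda^c_k$ are strictly decreasing and $\lambda^u_k$ strictly increasing in $k$, so that together with the limits above $\lambda^s_k\searrow 0$, $\lambda^c_k\searrow 1$, $\lambda^u_k\nearrow\infty$. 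For the eigenvectors, writing $A_k v = \lambda v$ with $v = (v_1,v_2,v_3)$, the last two coordinate equations read $v_1 = \lambda v_3$ and $v_1 + v_2 = \lambda v_2$, i.e. $v_3 = v_1/\lambda$ and $v_2 = v_1/(\lambda-1)$ (well defined since no eigenvalue equals $1$); normalizing $v_1 = 1$ gives $v^\sigma_k = (1,(\lambda^\sigma_k-1)^{-1},(\lambda^\sigma_k)^{-1})$, and substituting this back into the first coordinate equation reproduces exactly $p_k(\lambda) = 0$, confirming it is a genuine eigenvector.

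The main obstacle is the strict monotonicity in $k$: it does not follow from the asymptotics and forces the sign analysis of $g'$ (or of $p_k'$ at each root) on the three separate branches. Everything else — the interlacing obtained from the intermediate value theorem, the Vieta estimates, the near-$1$ expansion, and the eigenvector computation — is routine.
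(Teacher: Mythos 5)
Your proof is correct and follows essentially the same strategy the paper indicates: localize the three real roots of $p_k(x)=x^3-kx^2+(k+1)x-1$ via sign changes and the intermediate value theorem (the paper's Remark~\ref{rk.eigendirection} records exactly these evaluations). You go further than the paper, which cites Ponce--Tahzibi for the monotonicity and eigenvector statements: your implicit-differentiation formula $d\lambda/dk=-\lambda(1-\lambda)/p_k'(\lambda)$, combined with the $+,-,+$ sign pattern of $p_k'$ at the ordered simple roots, cleanly yields the strict monotonicity that the paper takes on faith, and the coordinate computation from the last two rows of $A_k$ gives the eigenvector. One minor remark: fixing any single $c>1$ already gives $\lambda^c_k<1+c/k\to 1$, so ``letting $c\downarrow 1$'' is unnecessary, though harmless; similarly, your Vieta-based bounds on $\lambda^u_k$ and $\lambda^s_k$ are a tidy replacement for the paper's additional evaluations $p_k(k/2)<0<p_k(k)$.
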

\begin{remark}\label{rk.eigendirection}
It is easy to see that for $k$ sufficiently large, one has

\begin{itemize}
\item $p_k(0)=-1<0,$
\item $p_k(\frac{1}{k})=\frac{1}{k^3}>0,$
\item $p_k(1)=1>0,$
\item $p_k(1+\frac{1}{k})=\frac{1}{k^3}+\frac{3}{k^2}+\frac{3}{k}>0,$
\item $p_k(2)=-2k+9<0,$
\item $p_k(\frac{k}{2})=-\frac{k^3}{8}+\frac{k^2}{2}+\frac{k}{2}-1<0,$ 
\item  $p_k(k)=k^2+k-1>0.$

\end{itemize}
 Therefore we can locate the eigenvalues of $A_k$ on the real line as follows:
\begin{equation}\label{eq.eigenv}
0<\lambda^s_k<\frac{1}{k}<1+\frac{1}{k}<\lambda^c_k<2<\frac{k}{2}<\lambda^u_k<k.
\end{equation}

Let $e^\sigma_k=v^\sigma_k/|v^\sigma_k|$ and denote by $E^\sigma_k = \mbox{span}\{e^\sigma_k\}$ the eigenspaces,  where $\sigma=s,c,u$.
Then we have, as $k\to \infty$, the following limits:
\begin{equation}\label{base.k}
e^s_k\to \left(
    \begin{array}{c}
      0 \\
      0 \\
      1 \\
    \end{array}
  \right),
\;\;
e^c_k\to \left(
    \begin{array}{c}
      0 \\
      1 \\
      0 \\
    \end{array}
  \right),
  \;\;
  e^u_k\to \left(
    \begin{array}{c}
      1 \\
      0 \\
      0 \\
    \end{array}
  \right)
.\end{equation}
\end{remark}

To simplify our computations  we consider the representation  of the matrix $A_k$ under the basis $\cB_k = \{e^u_k,e^c_k, e^s_k\}$, which is  
$$[A_k]_{\cB_k} = \left(
  \begin{array}{ccc}
    \lambda^u_k & 0 & 0 \\
    0 & \lambda^c_k & 0 \\
    0 & 0 & \lambda^s_k \\
  \end{array}
\right).$$
Henceforth, unless otherwise specified, we will use $\cB_k$ as the coordinate basis for $\RR^3$.

\subsection{A perturbation of the identity inside a cylinder}
In this subsection we will construct a perturbation $\Psi_{a,b,c,d}$ inside a solid cylinder $\mathcal{T}_{a,d}$ with length $a>1$ and radius $d>0$:
\begin{equation}\label{e.T}
\mathcal{T}_{a,d}=\{[x,y,z]_{\cB_k}: y\in[-a,a], x^2+z^2 \le d^2\}\subset \RR^3.
\end{equation}
We will also take $c\in (-1,0)$ and $b\in(0,a)$, whose purpose will become apparent soon. We remark that all constants $a,b,c$ and $d$, as well as the basis $\cB_k$, depend on $k$; however, since our construction is carried out for all $k$ sufficiently large and to improve the readability, we will suppress the dependence  on $k$ for now.

We begin with a $C^\infty$ bump function $\rho_d: [0,d]\to \mathbb{R}$ such that \label{bump.function}
\begin{itemize}
\item[(a1)] $\rho_d(0)=1$ and $\rho_d^\prime(0)=0$;
\item[(a2)] $\rho_d(d)=0$ and $\rho_d^\prime(d)=0$;
\item[(a3)] $\rho_d(x)$ is decreasing;
\item[(a4)] $-\frac{4}{d}<\rho_d^\prime < 0$ for $0<x\leq d$.
\end{itemize}
A typical choice is 
$$
\rho_1(x) = \begin{cases}
\exp(1-\frac{1}{1-x^2}), &x\in[0,1)\\
0,&x=1
\end{cases}
$$
and rescaled to $[0,d]$ by $\rho_d(x) = \rho_1(x/d)$.

We will use a bump function $\phi_{a,b,c}$, obtained in the lemma below, to construct the perturbation $\Psi_{a,b,c,d}$ along the center of the cylinder $\mathcal T_{a,b}$:
\begin{lemma}\label{l.1dmap}
Given $a>1$, $b\in(0,a)$ and $c\in(-1,0)$ there exists a $C^\infty$ function $\phi_{a,b,c}: [-a,a]\to \mathbb{R}$ such that:
\begin{itemize}
\item[(b1)] $\phi_{a,b,c}(x)=cx$ for $x\in [-b,b]$;
\item[(b2)]  $\phi_{a,b,c}(a)=\phi_{a,b,c}(-a)=0$, and $\phi_{a,b,c}^\prime(a)=\phi_{a,b,c}^\prime(-a)=0$;
\item[(b3)] $c\leq  \phi_{a,b,c}^\prime(x) < 2\frac{b}{a-b}|c|$ for all $x\in [-a,a]$, $\phi_{a,b,c}^\prime(x)=c$ if and only if $x\in [-b,b]$;
\item[(b4)] $|\phi_{a,b,c}(x)|<2b|c|$.
\end{itemize}
\end{lemma}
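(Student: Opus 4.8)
The plan is to construct $\phi_{a,b,c}$ explicitly by integrating a carefully designed derivative, since the conditions (b1)--(b4) are essentially conditions on $\phi'$ together with boundary values. First I would note that by symmetry it suffices to build $\phi$ on $[0,a]$ with $\phi(0)=0$, prescribing $\phi'$ to be an even function on $[-a,a]$; then $\phi$ itself is odd, which automatically gives $\phi_{a,b,c}(-a)=-\phi_{a,b,c}(a)$ and $\phi'_{a,b,c}(-a)=\phi'_{a,b,c}(a)$, so (b2) reduces to checking $\phi(a)=0$ and $\phi'(a)=0$. On $[0,b]$ we are forced to take $\phi'\equiv c$ (so that $\phi(x)=cx$, giving (b1)), and on $[b,a]$ we need a $C^\infty$ function $\psi:=\phi'$ with $\psi(b)=c$, all derivatives of $\psi$ vanishing at $b$ (to glue smoothly with the constant $c$), $\psi(a)=0$ with all derivatives vanishing at $a$, and with $\psi$ strictly greater than $c$ on $(b,a]$ so that the locus $\{\phi'=c\}$ is exactly $[-b,b]$ as required in (b3). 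The single nontrivial quantitative constraint is that $\int_b^a \psi = -\int_0^b \psi = -cb = b|c|$, which forces the ``average value'' of $\psi$ on $[b,a]$ to be $b|c|/(a-b)$.

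The key construction: pick a fixed $C^\infty$ bump profile $\eta:[0,1]\to[0,1]$ with $\eta\equiv 0$ near $0$, $\eta\equiv 1$ on a large middle portion, $\eta$ monotone increasing, and $\int_0^1\eta = \tfrac12$ say (or any fixed value $\theta\in(0,1)$); then set, on $[b,a]$,
\[
\psi(x) \;=\; c + M\,\eta\!\left(\frac{x-b}{a-b}\right),
\]
where $M>0$ is the unique constant making $\int_b^a\psi = b|c|$, i.e. $M(a-b)\theta = b|c| + c\,(a-b)\theta$... more cleanly, $\int_b^a \psi = c(a-b) + M(a-b)\theta = b|c|$ gives $M = \dfrac{b|c| + (a-b)|c|}{(a-b)\theta} = \dfrac{a|c|}{(a-b)\theta}$ (using $c=-|c|$). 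One then defines $\phi(x) = \int_0^x \psi$, extended oddly. I would verify (b1)--(b4) in turn: (b1) is immediate from $\psi\equiv c$ on $[0,b]$; (b2) from oddness plus $\int_0^a\psi = cb + \int_b^a\psi = cb + b|c| = 0$ and $\psi(a)=c+M\eta(1)=c+M$ — wait, this needs $\eta(1)=0$, so I should instead require $\eta$ to vanish near \emph{both} endpoints of $[0,1]$ with $\int_0^1\eta=\theta$, which is no obstacle. Then $\psi(a)=c$... but that contradicts $\phi'(a)=0$. So in fact the correct design is to let $\psi$ rise from $c$ to some positive plateau and then \emph{fall back to $0$} near $a$: take $\eta$ to be a smooth bump on $[0,1]$ vanishing to all orders at both ends, with $\eta>0$ on the interior, and require additionally that $\psi$ reach the value $0$ only asymptotically — cleaner still, write $\psi(x) = c\cdot\zeta(x) + M\cdot\eta\!\big(\tfrac{x-b}{a-b}\big)$ where $\zeta$ smoothly drops from $1$ at $b$ to $0$ at $a$ with vanishing derivatives, $\eta$ a fixed interior bump; solve $\int_b^a\psi=b|c|$ for $M$.

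The two things to check carefully are the sign/strictness condition in (b3) and the size bound (b4). For (b3): on $(b,a]$ we have $\psi(x) = c\zeta(x) + M\eta(\cdot)$; since $\zeta<1$ there and $M\eta\ge 0$, while at points where $\eta=0$ near $a$ we have $\psi=c\zeta > c$ (as $0\le\zeta<1$, $c<0$), we get $\psi>c$ strictly on $(b,a]$, hence $\{\phi'=c\}=[-b,b]$. The upper bound $\psi < 2\tfrac{b}{a-b}|c|$ needs $M\cdot\max\eta < 2\tfrac{b}{a-b}|c|$ roughly; from $M = \tfrac{b|c|}{(a-b)\theta}$ (after the $\zeta$-correction contributes at most another $O(|c|)$ that I can absorb by choosing $\theta$ close enough to $1$, or by choosing $\eta$'s plateau value appropriately), this holds provided $\max\eta/\theta < 2$, which is arranged by taking $\eta$ with $\max\eta$ close to its average $\theta$ (e.g. $\eta$ essentially a smoothed indicator). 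For (b4): $|\phi(x)| \le \int_0^a|\psi| \le |c|b + \int_b^a|\psi|$, and $\int_b^a|\psi| = \int_b^a\psi = b|c|$ since $\psi\ge 0$ there (as $c\zeta+M\eta \ge c$, and one checks $\psi\ge 0$: where $\zeta$ is near $1$, $\eta$ is near its plateau $\approx\theta$ and $M\theta\approx b|c|/(a-b) \ge |c|\ge |c|\zeta$... this requires $b/(a-b)\ge 1$ i.e. $b\ge a/2$, which may \emph{not} hold — so instead I bound $\int_b^a|\psi|$ directly by $\int_b^a\big(|c| + M\eta\big) = |c|(a-b) + b|c| = a|c|$, giving $|\phi|\le |c|b + a|c|$, too weak). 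The fix for (b4) is to also control $|\psi|$ pointwise: we have $c\le\psi$ always by construction, so $\psi\ge c = -|c|$, hence $|\psi|\le \max(|c|, M\max\eta + |c|\max\zeta)$; and then $|\phi(x)| = |\int_0^x\psi|$ — to get the clean bound $2b|c|$ I would instead argue that $\phi$ is increasing (since $\phi'=\psi\ge c$... no, $\psi$ can be negative).

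So the honest main obstacle, and the step I expect to require the real care, is reconciling (b4)'s bound $|\phi_{a,b,c}(x)|<2b|c|$ with the freedom in (b3): one must choose the profile $\psi$ on $[b,a]$ so that $\phi=\int\psi$ never strays far from the values $\pm cb$ attained at $x=\pm b$. This is achieved by demanding $\psi\ge 0$ on all of $[b,a]$ — then $\phi$ is monotone increasing on $[0,a]$ from $\phi(b)=cb=-b|c|$ up to $\phi(a)=0$, so $|\phi|\le b|c|<2b|c|$ on $[b,a]$, and on $[0,b]$ trivially $|\phi(x)|=|cx|\le b|c|$. Requiring $\psi\ge 0$ on $[b,a]$ forces $M\eta(x') \ge |c|\zeta(x)$ pointwise; since $\eta$ and $\zeta$ are supported/plateaued compatibly (arrange $\zeta$ to have already dropped below $M\eta/|c|$ wherever $\eta$ is small near $a$, i.e. choose $\zeta$ to decay faster than $\eta$ near $a$ and to stay $=1$ only where $\eta\ge$ some positive constant), this is a matter of selecting the two fixed profiles in the right order; and it is consistent with the average constraint $\int_b^a\psi=b|c|$ and with the upper bound in (b3) as long as $a-b$ is the relevant scale, which it is. Once $\psi\ge 0$ is secured, (b3)'s upper bound $\psi < \tfrac{2b}{a-b}|c|$ follows because $\int_b^a\psi = b|c|$ over an interval of length $a-b$ forces the average to be $\tfrac{b}{a-b}|c|$, and a bump with max at most twice its average does the job; strict inequality $\psi>c$ on $(b,a]$ is immediate from $\psi\ge 0>c$. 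I would present the construction with these two auxiliary profiles $\eta,\zeta$ fixed once and for all (independent of $a,b,c$), define $M=M(a,b,c)$ by the integral constraint, set $\phi_{a,b,c}(x)=\int_0^x\psi$ extended as an odd function, and then verify (b1)--(b4) in the order (b1), (b2), then $\psi\ge 0$, then (b4), then (b3), as sketched above.
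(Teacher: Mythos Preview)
Your approach---design the derivative $\psi=\phi'$ on $[b,a]$ and integrate---is workable in principle, but the execution contains a genuine inconsistency. To secure (b4) you demand $\psi\ge 0$ on all of $[b,a]$, and you then reuse this to deduce the strict inequality $\psi>c$ on $(b,a]$ for (b3). But you also need $\psi(b)=c<0$ in order to glue $C^\infty$-smoothly with the constant value $c$ on $[0,b]$; by continuity $\psi$ must remain negative on a right neighborhood of $b$, so $\psi\ge 0$ on $[b,a]$ is impossible. Your compatibility discussion (``choose $\zeta$ to decay faster than $\eta$ near $a$'') only addresses the endpoint $a$, not $b$, where the obstruction actually sits. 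The repair is not to force $\psi\ge 0$ but to make the interval where $\psi<0$ short: if $\zeta$ drops from $1$ to $0$ over $[b,b+\delta]$ with $\delta<b$, then $\phi$ can dip at most $|c|\delta$ below $\phi(b)=-b|c|$, giving $|\phi|\le (b+\delta)|c|<2b|c|$. Your claim that the profiles $\eta,\zeta$ can be fixed once and for all, independent of $a,b,c$, is also wrong: writing $\alpha=\int_0^1\zeta$ and $\theta=\int_0^1\eta$ for the unit-interval profiles, your formula for $M$ together with the upper bound in (b3) forces $\alpha<(2\theta-1)\,b/(a-b)$, which fails for any fixed $\alpha>0$ once $b\ll a$.

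The paper sidesteps all of this with a much more direct device: take the piecewise-linear function $L$ equal to $cx$ on $(-\infty,b+\vep)$, linearly interpolating to $0$ over $[b+\vep,a-\vep]$, and identically $0$ on $[a-\vep,\infty)$; then set $\phi_{a,b,c}=L*\varphi_\vep$ for a symmetric mollifier supported in $(-\vep,\vep)$, and extend oddly. All four properties are then read off directly from the pointwise bounds on $L$ and on the piecewise-constant $L'$: in particular $|L|\le(b+\vep)|c|<2b|c|$ gives (b4), and $L'\in\{c,\ \tfrac{(b+\vep)|c|}{a-b-2\vep},\ 0\}$ with $\tfrac{(b+\vep)|c|}{a-b-2\vep}<\tfrac{2b|c|}{a-b}$ for small $\vep$ gives (b3). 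No integration constraints, no profile-matching, and the single parameter $\vep$ absorbs all the dependence on $a,b,c$.
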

\begin{proof} 
We will only construct $\phi_{a,b,c}$ on $[0,a]$ and extend it to $[-a,a]$ as an odd function.

Let $\vep>0$ be sufficiently small so that $b+\vep<a-\vep$. We begin with the piecewise linear function:
$$
L_{a,b,c;\vep}(x) = \begin{cases}
cx, &x\in(-\infty,b+\vep);\\
0, &x\in [a-\vep,\infty);\\
\frac{-(b+\vep)c}{a-b-2\vep}\big(x-(a-\vep)\big), &\mbox{otherwise}.
\end{cases}
$$
The function $L_{a,b,c;\vep}(x)$ is continuous with 
$$
\frac{-(b+\vep)c}{a-b-2\vep} <2\frac{b}{a-b}|c|
$$
if $\vep>0$ is small enough. See Figure~\ref{f.L}.

\begin{figure}
	\centering
	\def\svgwidth{\columnwidth}
	\includegraphics[scale = 0.8]{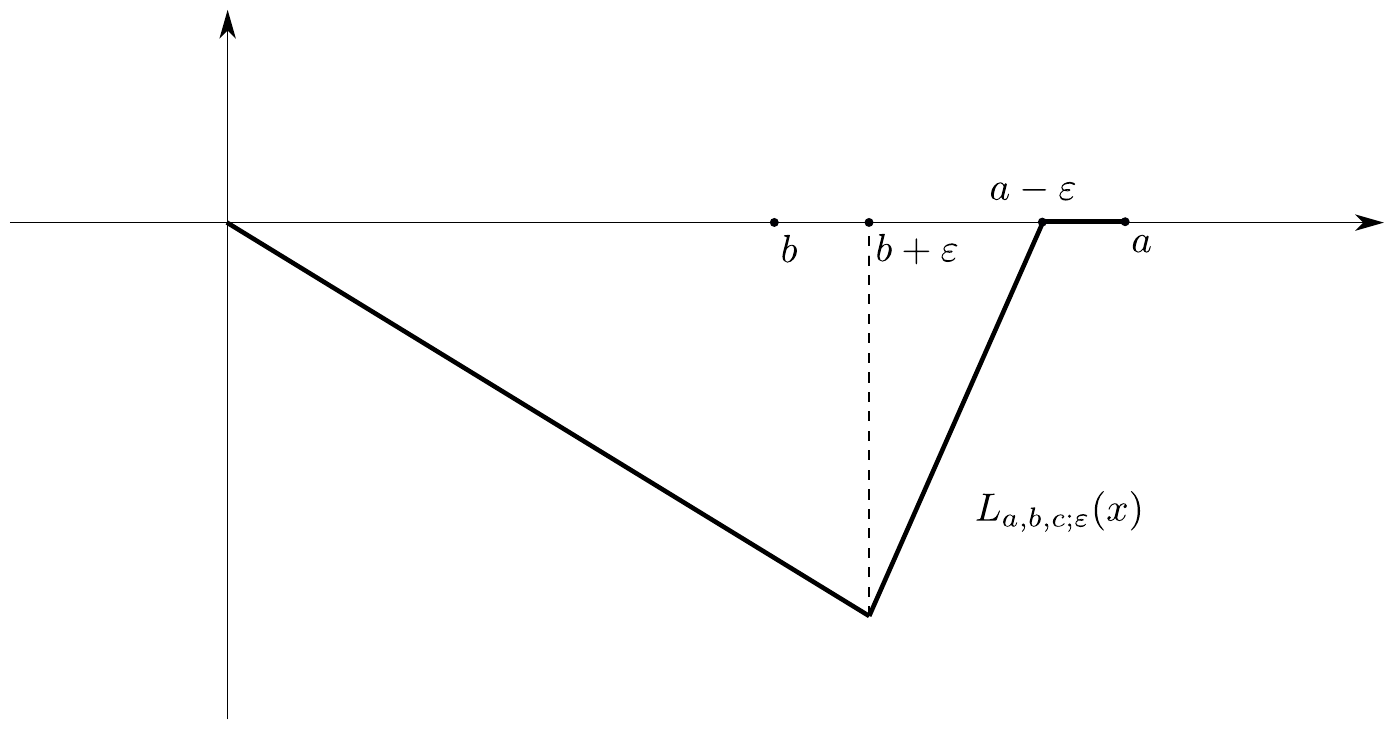}
	\caption{The piecewise linear function $L_{a,b,c;\vep}$ restricted to $[0,a]$.}
	\label{f.L}
\end{figure}

To obtain $\phi_{a,b,c}$ we fix $\vep>0$ and define
\begin{equation*}\label{e.phi}
\phi_{a,b,c} = L_{a,b,c;\vep}* \varphi_\vep
\end{equation*}
where $\varphi_\vep(x)$ is a symmetric mollifier whose support is $(-\vep,\vep)$ and $*$ denotes the convolution. It is straightforward to check that $\phi_{a,b,c}\mid_{[0,a]}$ satisfies all the required properties.
\end{proof}

We define a $C^\infty$ map $\Psi_{a,b,c,d}: \mathcal{T}_{a,d}\to \mathcal{T}_{a,d}$ by:
\begin{equation}\label{e.psi}
\Psi_{a,b,c,d}([x,y,z]_\cB)=[x,y+\rho_d(r)\phi_{a,b,c}(y),z]_\cB,
\end{equation}
where $r=\sqrt{x^2+z^2}$ and $\mathcal T_{a,d}$ is the cylinder defined in ~\eqref{e.T}.
Its derivative (under the coordinate basis $\cB$) is given by:
\begin{equation}\label{e.Tpsi}
[T\Psi_{a,b,c,d}]_\cB=\left(
    \begin{array}{ccc}
     1 & 0 & 0 \\
       \phi_{a,b,c}(y)\rho_d^\prime(r)\frac{x}{r} &  1+\rho_d(r)\phi_{a,b,c}^\prime(y) & \phi_{a,b,c}(y)\rho_d^\prime(r)\frac{z}{r} \\

      0 & 0 & 1 \\
    \end{array}
  \right)
\end{equation}
$$
=\left(
    \begin{array}{ccc}
     1 & 0 & 0 \\
       C_1 &  C_2 & C_3 \\

      0 & 0 & 1 \\
    \end{array}
  \right),
$$
where $C_1,C_2$ and $C_3$ are smooth functions of $[x,y,z]_\cB$.

The lemma below follows now immediately from ~\eqref{e.psi} and~\eqref{e.Tpsi}.
\begin{lemma}\label{l.tube}
For any $a>1$, $b\in(0,a)$, $c\in(-1,0)$ and $d>0$, the following properties hold:
\begin{itemize}
\item $\Psi_{a,b,c,d}:\mathcal{T}_{a,d}\to \mathcal{T}_{a,d}$ is a $C^\infty$ diffeomorphism isotopic to identity  that  preserves any line parallel to the $y$-axis;
\item $\Psi_{a,b,c,d}\mid_{\partial \mathcal{T}}=id$ and $T\Psi_{a,b,c,d}\mid_{\partial \mathcal{T}}=Id$;
\item $T\Psi_{a,b,c,d}$ preserves any 2-dimensional linear subspace which is parallel to the $y$-axis;
\item $\det T\Psi_{a,b,c,d}\ge 1+c>0$. The equality holds if and only if  $x=z=0$ and $y\in[-b,b]$.
\item 
\begin{equation}\label{eq.boundinsidetube1}
1+c\le C_2\le  1+2\frac{b}{a-b}|c|\quad\mbox{ and }\quad
 \max\left\{|C_1|,|C_3|\right\} \leq \frac{8b}{d}|c|.
\end{equation}

\end{itemize}
\end{lemma}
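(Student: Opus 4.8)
The statement follows by a direct verification from the explicit formulas \eqref{e.psi} and \eqref{e.Tpsi} together with properties (a1)--(a4) of $\rho_d$ and (b1)--(b4) of $\phi_{a,b,c}$, so the plan is to dispose of the five items in turn. I would start with the diffeomorphism and isotopy claims. Since $\Psi_{a,b,c,d}$ fixes the $x$- and $z$-coordinates and acts on each segment $\{x=\text{const},\ z=\text{const}\}\cap\mathcal{T}_{a,d}$ by $y\mapsto y+\rho_d(r)\phi_{a,b,c}(y)$, whose $y$-derivative is exactly $C_2=1+\rho_d(r)\phi_{a,b,c}'(y)$, the inequalities $0\le\rho_d\le 1$, $\phi_{a,b,c}'\ge c$ and $c\in(-1,0)$ give $C_2\ge 1+c>0$; hence this one-dimensional map is an increasing diffeomorphism of $[-a,a]$ fixing both endpoints (by $\phi_{a,b,c}(\pm a)=0$, property (b2)), so $\Psi_{a,b,c,d}$ is a $C^\infty$ bijection of $\mathcal{T}_{a,d}$ onto itself preserving every line parallel to the $y$-axis. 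The linear isotopy $\Psi^{t}([x,y,z]_{\cB})=[x,\ y+t\rho_d(r)\phi_{a,b,c}(y),\ z]_{\cB}$, $t\in[0,1]$, has $y$-derivative $1+t\rho_d(r)\phi_{a,b,c}'(y)\ge 1+tc>0$, so each $\Psi^{t}$ is a diffeomorphism of $\mathcal{T}_{a,d}$ that moreover is the identity on $\partial\mathcal{T}$; this connects $\Psi_{a,b,c,d}$ to the identity.

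Next I would treat the boundary. The boundary $\partial\mathcal{T}$ is the union of the lateral part $\{r=d\}$, where $\rho_d(d)=\rho_d'(d)=0$ by (a2), and the two caps $\{y=\pm a\}$, where $\phi_{a,b,c}(\pm a)=\phi_{a,b,c}'(\pm a)=0$ by (b2). On each of these pieces, substituting into \eqref{e.psi} gives $\Psi_{a,b,c,d}=id$, and substituting into \eqref{e.Tpsi} gives $C_1=C_3=0$ and $C_2=1$, that is $T\Psi_{a,b,c,d}=Id$. The invariance of any $2$-plane parallel to the $y$-axis is immediate from \eqref{e.Tpsi}: $T\Psi_{a,b,c,d}$ fixes the first and third coordinates, so a plane containing $(0,1,0)_{\cB}$, which can be written $\{(sa_0,t,sc_0)_{\cB}:s,t\in\RR\}$, is sent into itself, hence (being invertible) onto itself.

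For the determinant and the bounds \eqref{eq.boundinsidetube1} I would expand the matrix in \eqref{e.Tpsi} along its first row, obtaining $\det T\Psi_{a,b,c,d}=C_2=1+\rho_d(r)\phi_{a,b,c}'(y)$. The estimates $c\le\phi_{a,b,c}'\le 2\tfrac{b}{a-b}|c|$ and $0\le\rho_d\le1$ then give $1+c\le C_2\le 1+2\tfrac{b}{a-b}|c|$, and in particular $\det T\Psi_{a,b,c,d}\ge 1+c>0$. For the equality case, $C_2=1+c$ forces $\rho_d(r)\phi_{a,b,c}'(y)=c$; chasing the chain $\rho_d(r)\phi_{a,b,c}'(y)\ge\rho_d(r)\,c\ge c$ to equality forces $\rho_d(r)=1$, hence $r=0$ and $x=z=0$ by the strict monotonicity (a3)--(a4), and also $\phi_{a,b,c}'(y)=c$, hence $y\in[-b,b]$ by (b3); the converse follows at once from $\rho_d(0)=1$ and $\phi_{a,b,c}(x)=cx$ on $[-b,b]$. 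For the off-diagonal entries, (b4), (a4) and $|x|/r,|z|/r\le 1$ give $|C_1|,|C_3|\le|\phi_{a,b,c}(y)|\,|\rho_d'(r)|\le 2b|c|\cdot\tfrac{4}{d}=\tfrac{8b}{d}|c|$.

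I expect the only genuinely delicate point to be the $C^\infty$ regularity of $\Psi_{a,b,c,d}$, and of the entries $C_1,C_2,C_3$, across the central axis $\{x=z=0\}$, where $r$ and $x/r,z/r$ are individually not smooth. This is precisely why (a1) is required together with the evenness of the standard mollified bump $\rho_d$: then $\rho_d(r)$ is a smooth function of $(x,z)$ and $\rho_d'(r)\tfrac{x}{r}=\partial_x\rho_d(r)$, $\rho_d'(r)\tfrac{z}{r}=\partial_z\rho_d(r)$ extend smoothly across the axis, so the matrix in \eqref{e.Tpsi} is smooth everywhere. Once this is observed, everything else is a direct substitution into \eqref{e.psi} and \eqref{e.Tpsi}.
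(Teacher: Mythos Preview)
Your proposal is correct and follows exactly the approach the paper indicates: the paper simply states that the lemma ``follows now immediately from~\eqref{e.psi} and~\eqref{e.Tpsi},'' and you have supplied the routine verifications that justify that claim. Your observation about smoothness at the axis $\{x=z=0\}$ is a nice extra point the paper glosses over; it is handled by the fact that the paper's explicit choice $\rho_1(x)=\exp(1-1/(1-x^2))$ is an even smooth function, so $\rho_d(r)$ and its partial derivatives extend smoothly across $r=0$.
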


\subsection{The perturbed map $f_k$}\label{ss.7.3}
We are now ready to construct the perturbed map $f_k$. For each $k\in\ZZ$ we take:
\begin{itemize}
\item $a_k>1$;
\item $\theta_k\in(0,(\lambda^c_k)^{-1}]$ such that $b_k:=\theta_ka_k>1$;
\item $c_k = (\lambda^c_k)^{-2}-1\in(-1,0)$;
\item $d_k>0$ small enough such that the projection $\pi:\RR^3\to \TT^3$, restricted on $\mathcal T_{a_k,d_k}$, is injective.
\end{itemize}
The properties above and Lemma~\ref{l.tube} show that $\psi_k:=\Psi_{a_k,b_k,c_k,d_k}$ can be viewed as a $C^\infty$ diffeomorphisms on $\TT^3$ with $\psi_k = id$ outside $\pi\left(\mathcal T_{a_k,d_k}\right)$. We also define the following center segments:
\begin{equation}\label{e.Ik}
J_k = \bigcup_{t\in[-a_k,a_k]} te^c_k,\,\,  \mbox{ and } I_k = \bigcup_{t\in[-b_k,b_k]} te^c_k.
\end{equation}
It follows that $A_k(I_k)\subset J_k\subset \mathcal T_{a_k,d_k}$. From Lemma~\ref{l.tube} we see that:

\begin{enumerate}
\item $\psi_k$ fixes the center leaves of $A_k$;
\item $T\psi_k$ preserves the bundle $E_{A_k}^{cu}$;
\item $T\psi_k\mid_{E^c_k} \ge 1+c_k = (\lambda^c_k)^{-2}$. The equality holds only on $I_k$.
\end{enumerate}

Finally we define 
\begin{equation}\label{e.fk}
f_k = A_k\circ \psi_k\circ A_k.
\end{equation}
We state the following proposition which immediately implies Theorem~\ref{main.example}.

\begin{proposition}\label{proposition}
There is $k_0>0$ such that for every $k\geq k_0$, the  diffeomorphism $f_k$ defined by~\eqref{e.fk} is  partially hyperbolic and is contained in the isotopy class of the linear Anosov diffeomorphism $B_k = (A_k)^2$. Furthermore:
\begin{itemize}
\item[(a)] $\cB(f_k)=\{x: | \det(Tf_k\mid_{E_{f_k}^{cu}(x)}) | \leq \lambda_{u} = (\lambda^u_k)^2\}$ coincides with $I_k$, the center segment defined by~\eqref{e.Ik}; consequently, $f_k$ has robustly minimal stable foliation;
\item[(b)] $\|Tf_k|_{E^c_{f_k}(x)}\|=1$ for every $x\in I_k$, and $\|Tf_k|_{E^c_{f_k}(x)}\|>1$ otherwise;

\item[(c)] if in addition there exists  $C>0$ such that $|c_k|/d_k = (1-(\lambda^c_k)^{-2})/d_k<C$ for all $k$ large enough, then for $k$ sufficiently large, there exists an open set $\cU_k$, containing $f_k$ in its closure, such that every $g\in\cU_k$ has a minimal unstable foliation.
\end{itemize}
Moreover, it is possible to choose $a_k$, $b_k$ and $d_k$ such that  $\length(I_k)\to\infty$ as $k\to\infty$, and $\lim_H I_k = \TT^3$.
\end{proposition}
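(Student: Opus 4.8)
\textbf{Proof proposal for Proposition \ref{proposition}.}

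The plan is to verify the four assertions in turn, working throughout in the eigenbasis $\cB_k$ and exploiting the block structure of $T\psi_k$ from Lemma~\ref{l.tube}. First I would establish partial hyperbolicity of $f_k = A_k\circ\psi_k\circ A_k$. Since $T\psi_k$ fixes the $z$-axis ($E^s_k$) and preserves every $2$-plane parallel to the $y$-axis, in particular the $cu$-plane $\mathrm{span}\{e^u_k,e^c_k\}$, the map $f_k$ has an $A_k$-invariant splitting whose strong-stable bundle is exactly $E^s_k$; its contraction rate is $(\lambda^s_k)^2$, untouched by $\psi_k$. For the cone in the $cu$-direction one uses the estimates \eqref{eq.boundinsidetube1}: the off-diagonal entries $C_1,C_3$ are $O(|c_k|/d_k)$ and the diagonal entry $C_2$ lies in $[1+c_k,\,1+2\tfrac{b_k}{a_k-b_k}|c_k|]$. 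Composing with two applications of $A_k$ (which expands $E^u_k$ by $\lambda^u_k\ge k/2$ and $E^c_k$ by $\lambda^c_k$), one checks that $Tf_k$ preserves a narrow $cu$-cone, dominates $E^s_k$, and still expands $E^u_k$ strictly more than $E^c_k$, provided $k$ is large (so that $\lambda^u_k/\lambda^c_k\to\infty$ and the perturbation, whose center factor is $\ge (\lambda^c_k)^{-2}$, only ever reduces the center expansion of $A_k^2$ from $(\lambda^c_k)^2$ down to $1$). One gets an invariant center bundle $E^c_{f_k}$ near $E^c_k$ by the usual cone/graph-transform argument. The isotopy statement is immediate: $\psi_k$ is isotopic to the identity (Lemma~\ref{l.tube}), hence $f_k$ is isotopic to $A_k^2 = B_k$, which is linear Anosov with eigenvalues $(\lambda^s_k)^2 < 1 < (\lambda^c_k)^2 < (\lambda^u_k)^2$, all distinct and real.

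Next, for (a) and (b), I would compute the Jacobian along the center-unstable bundle. Because $T\psi_k$ preserves $E^{cu}_{A_k}$ and acts there with determinant equal to the $(2\times 2)$ $cu$-minor, $\det(T\psi_k\mid_{E^{cu}}) = C_2 \ge 1+c_k = (\lambda^c_k)^{-2}$, with equality exactly on the core segment $I_k$ (where $x=z=0$, $y\in[-b_k,b_k]$); similarly the center factor $\|T\psi_k\mid_{E^c_k}\| = C_2|_{\text{center}} \ge (\lambda^c_k)^{-2}$ with equality only on $I_k$. Therefore $\det(Tf_k\mid_{E^{cu}_{f_k}}) = (\lambda^u_k\lambda^c_k)^2\cdot\det(T\psi_k\mid_{E^{cu}}) \ge (\lambda^u_k)^2 = \lambda_u$, with equality precisely on the orbit-segment where $\psi_k$ achieves its minimal stretch, namely on $I_k$ (using $A_k(I_k)\subset J_k\subset\mathcal T_{a_k,d_k}$ and that $\psi_k=\mathrm{id}$ off the cylinder, so the only place the determinant can drop to its minimum after the first $A_k$ is on $I_k$ itself). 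This is exactly $\cB(f_k)=I_k$, a zero-leaf-volume subset of any strong unstable leaf since it is a single center segment and $E^c_{f_k}$ is uniformly transverse to $E^u_{f_k}$; Theorem~\ref{m.boundedpotention} then gives robust minimality of the stable foliation, proving (a). For (b), the same computation with the center factor alone gives $\|Tf_k\mid_{E^c_{f_k}(x)}\| = (\lambda^c_k)^2\cdot C_2 \ge 1$, with equality iff $x\in I_k$; some care is needed because $E^c_{f_k}$ is only near $E^c_k$, not equal to it, but since $\psi_k$ genuinely preserves the $A_k$-center leaves and these are $f_k$-invariant, in fact $E^c_{f_k}=E^c_{A_k}$ exactly, so the computation is clean.

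For (c), the goal is to run the blender-type argument of \cite{BDU} for the strong unstable foliation; the key new ingredient is that, by the final sentence of the proposition, the center segments $I_k$ become arbitrarily long and eventually $\vep$-dense (indeed $\lim_H I_k = \TT^3$). Concretely, I would choose $a_k$ growing fast enough (and $b_k=\theta_k a_k$ with $\theta_k\le(\lambda^c_k)^{-1}$, forcing $b_k\ge a_k/\lambda^c_k\to\infty$ as soon as $a_k$ grows faster than $\lambda^c_k$), while keeping $d_k$ small enough for injectivity of $\pi$ on $\mathcal T_{a_k,d_k}$ and, under the extra hypothesis $|c_k|/d_k<C$, keeping the off-diagonal terms $C_1,C_3$ uniformly bounded so that the $cu$-cone stays narrow robustly. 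On $I_k$ the diffeomorphism is the identity along the center, so the dynamics there reduces to a normally hyperbolic circle's worth of behaviour along an arbitrarily long, nearly-dense segment; this is precisely the setting in which \cite{BDU} produces a robustly minimal strong unstable foliation (one uses the long segment to guarantee that forward iterates of any unstable disk become $\vep$-dense, and the domination $E^s\oplus E^c\oplus E^u$ to propagate this to the strong unstable leaves of nearby maps). One then sets $\cU_k$ to be the resulting $C^1$-open set; $f_k$ lies in its closure rather than its interior because the hypothesis of Theorem~\ref{m.boundedpotention} (zero leaf volume of $\cB$) is not itself open, as noted in Remark~\ref{rk.sharpbound}. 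Finally, to arrange $\length(I_k)=2b_k\to\infty$ and $\lim_H I_k=\TT^3$ simultaneously with all earlier constraints, one notes that the center direction $e^c_k\to(0,1,0)$ is irrational on $\TT^3$ for a cofinal set of $k$ (or can be perturbed to be so within the construction), so a segment of length $\to\infty$ along it is eventually $\vep$-dense for every fixed $\vep$; choosing $a_k$ to grow much faster than $\lambda^c_k$ and $d_k\to 0$ slowly enough that $|c_k|/d_k$ stays bounded makes all requirements compatible.

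\textbf{Main obstacle.} The genuinely delicate point is part (c): verifying that the \cite{BDU} mechanism applies \emph{robustly} here, i.e.\ that the long identity-center segment $I_k$ really forces $\vep$-density of iterated unstable disks for all $g$ in a $C^1$-neighbourhood, and that this neighbourhood can be taken uniform enough (this is where the hypothesis $|c_k|/d_k<C$ is essential, controlling $C_1,C_3$ and hence the cone field and the transversality of images of disks to $\cF^s$). Everything else — partial hyperbolicity, the exact identification $\cB(f_k)=I_k$, and (b) — is a bookkeeping exercise with the explicit matrix \eqref{e.Tpsi} and the eigenvalue asymptotics of Lemma~\ref{p.eigenvalueandvector}.
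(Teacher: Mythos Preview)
Your outline for parts (a) and (b), the isotopy statement, and the identification $E^c_{f_k}=E^c_{A_k}$ is essentially what the paper does. However there are two genuine gaps.

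\medskip
\textbf{Partial hyperbolicity.} You claim that $T\psi_k$ fixes the $z$-axis so that the strong-stable bundle of $f_k$ is exactly $E^s_{A_k}$ with contraction rate $(\lambda^s_k)^2$. This is false: from \eqref{e.Tpsi}, the vector $[0,0,1]_{\cB_k}$ is sent to $[0,C_3,1]_{\cB_k}$ with $C_3\ne 0$ in general, so $E^s_{A_k}$ is \emph{not} $Tf_k$-invariant. What \emph{is} invariant are the two $2$-planes $E=\RR^{XY}=E^{cu}_{A_k}$ and $G=\RR^{YZ}=E^{cs}_{A_k}$ (any $2$-plane containing the $y$-axis, as you note), with $F=E\cap G=E^c_{A_k}$. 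The paper therefore uses the criterion of Lemma~\ref{l.dominated}: build an unstable cone $\cC_{K^u_k}\subset E$ and a stable cone $\cC_{K^s_k}\subset G$ transverse to $F$, verify forward/backward invariance via the explicit matrix of $Tf_k$, and check the determinant inequalities \eqref{eq.partialhyperbolic}. The explicit constants $K^u_k,K^s_k$ obtained in \eqref{e.Ku}, \eqref{e.Ks} are crucial again in part (c).

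\medskip
\textbf{Part (c).} Your description (``blender-type argument'', ``forward iterates of any unstable disk become $\vep$-dense'') is not the mechanism the paper uses and would be hard to make work directly. The paper instead constructs a \emph{complete $u$-section} (Definition~\ref{d.usection}) and invokes Theorem~\ref{BDU}. Concretely: since $f_k\mid_{I_k}=\id$ along the center, a small perturbation $g_k$ makes $g_k(I_k)\subset\Int(I_k)$, and one takes $T=\bigcup_{x\in I_k}\cF^s_{g_k,R}(x)$. The whole content is then the claim that, in the universal cover, every unstable leaf through the fundamental domain hits $\bigcup_{x\in\Int(I_k)}\tilde\cF^s_{f_k}(x)$. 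This is where the hypothesis $|c_k|/d_k<C$ enters: it forces $K^u_k,K^s_k=o(b_k)$ via \eqref{e.K}, so that an unstable leaf cannot drift more than $o(b_k)$ in the $y$-coordinate while crossing to the $YZ$-plane, and therefore lands in the strip $\{0\}\times[-(1-o(1))b_k,(1-o(1))b_k]\times[-1,1]$ swept by stable leaves of $I_k$. Your explanation of the role of $|c_k|/d_k<C$ (``transversality of images of disks to $\cF^s$'') misses this point.

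\medskip
\textbf{Density of $I_k$.} Appealing to irrationality of $e^c_k$ ``for a cofinal set of $k$'' does not suffice: you must simultaneously guarantee that $\pi$ is injective on $\mathcal T_{a_k,d_k}$ \emph{and} that $|c_k|/d_k$ stays bounded. The paper handles this by an explicit arithmetic computation (Lemma~\ref{l.separate}): with $a_k=\tfrac12[\tfrac{1}{\lambda^c_k-1}]\cdot|v^c_k|$ and $d_k=(\lambda^c_k-1)/4$, one checks by hand that $d(J_k,J_k+\mathbf n)>2d_k$ for all nonzero $\mathbf n\in\ZZ^3$ and that $\pi(J_k)$ is $5(\lambda^c_k-1)$-dense. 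Since $|c_k|=\mathcal O(\lambda^c_k-1)$, this choice also gives $|c_k|/d_k$ bounded, closing the loop with part (c).
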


The fact that $\cB(f_k)=\{x: | \det(Tf_k\mid_{E_{f_k}^{cu}(x)}) | \leq \lambda_{u} = (\lambda^u_k)^2\}$ coincides with $I_k$ and item (b) follow directly from the construction of $f_{k}$ (in particular from the choice of $c_k$). In Section \ref{ss.PH} we establish the partial hyperbolicity of $f_{k}$. After this is done, the robust minimality of the stable foliation follows directly from Theorem~\ref{m.boundedpotention}, since the leaf volume of $\cB(f_{k})=I_{k}$ inside any strong unstable leaf is zero. The rest of this section is devoted to the proof of this proposition: The partially hyperbolicity of $f_{k}$ is proven in Section~\ref{ss.PH}. The robust minimality of the unstable foliation is proven in Section~\ref{ss.minimal}. And the length and denseness of $I_k$ are proven in Section~\ref{ss.density}.

\subsection{Partial hyperbolicity\label{ss.PH}}
In this subsection we show that for $k$ sufficiently large, $f_k$ is  partially hyperbolic. 

We use the following classical criterion, whose proof is standard and can be found in the Appendix.
\begin{lemma}\label{l.dominated}
Suppose $f\in \Diff(\TT^3)$ admits two invariant two-dimensional subbundles $E$ and $G$ which are transverse to each other
at any point. Denote by $F=E\cap G$  which is a one-dimensional invariant subbundle. Suppose $f$ admits closed cones $\cC^E\subset E$ and $\cC^G\subset G$
both transverse to $F$, such such that $Tf(\cC^G)\subset \Int(\cC^G)$ and $Tf^{-1}(\cC^E)\subset \Int(\cC^E)$.
Then $f$ admits a dominated splitting $T\TT^3=E^\prime\oplus F \oplus G^\prime$, where $E^\prime=\cap_{n\ge 0} Tf^{-n}(\cC^E)$
and $G^\prime=\cap_{n\ge 0} Tf^n(\cC^G)$. Moreover, $f$ is partially hyperbolic if for any $x\in \TT^3:$
\begin{equation}\label{eq.partialhyperbolic}
\begin{split}
&|\det Tf\mid_{E(x)}|<|\det Tf\mid_{F(x)}|=\|Tf\mid_{F(x)}\|\\ &\text{  and  } \\ &|\det Tf\mid_{G(x)}|>|\det Tf\mid_{F(x)}|=\|Tf\mid_{F(x)}\|.
\end{split}
\end{equation}
\end{lemma}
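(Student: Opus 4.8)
The two assertions of the lemma are essentially independent: the dominated splitting $T\TT^3=E'\oplus F\oplus G'$ comes purely from the strict cone invariance, and \eqref{eq.partialhyperbolic} is used only afterwards to promote it to partial hyperbolicity. So I would organize the argument in three steps.

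\emph{Step 1: the splitting.} Fix $x$. Since $Tf^{-1}(\cC^E)\subset\Int(\cC^E)$, the cones $Tf^{-n}(\cC^E(f^n x))\subset E(x)$ form a nested decreasing sequence of nonempty closed cones, so $E'(x):=\bigcap_{n\ge 0}Tf^{-n}(\cC^E(f^n x))$ is a nonempty closed $Tf$-invariant sub-cone of $E(x)$. By the standard cone-field criterion (uniform strict invariance on the compact manifold $\TT^3$ forces the angular width of the $n$-th iterate to shrink, in fact exponentially and uniformly; see e.g. \cite{BDVnonuni}), $E'$ is a continuous $Tf$-invariant \emph{line} subbundle of $E$; likewise $G':=\bigcap_{n\ge 0}Tf^n(\cC^G)$ is a continuous $Tf$-invariant line subbundle of $G$. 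Because $\cC^E\cap F=\{0\}$ and $E'\subset\cC^E$ we get $E'\neq F$, hence $E'\oplus F=E$; symmetrically $G'\neq F$ and $F\oplus G'=G$. Finally $E$ and $G$ are transverse $2$-planes in the $3$-dimensional $T\TT^3$ with $E\cap G=F$, and the lines $E'\subset E$, $G'\subset G$ are both different from $F$, so the three lines $E',F,G'$ are in general position and $T\TT^3=E'\oplus F\oplus G'$.

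\emph{Step 2: domination.} This is the heart of the matter and is exactly the classical ``strictly invariant cone $\Rightarrow$ domination'' argument; I would sketch it as follows. Work inside the $Tf$-invariant plane bundle $G$, pick unit vectors $v\in F(x)$, $w\in G'(x)$; since $\cC^G$ is a cone around $G'$, the vector $u:=v+tw$ lies in $\cC^G(x)$ for $t$ large enough (fixed once and for all). Then $Tf^n u\in Tf^n(\cC^G)$, a cone around $G'$ whose angular aperture tends to $0$ exponentially and uniformly in $x$; as $F$ and $G'$ are $Tf$-invariant, $Tf^n u=Tf^n v+t\,Tf^n w$ is the decomposition of $Tf^n u$ along $F(f^nx)\oplus G'(f^nx)$, so $\|Tf^n v\|/\|Tf^n w\|\to 0$ exponentially, which (both being lines) reads $\|Tf^n\mid_{F}\|/\|Tf^n\mid_{G'}\|\to 0$, i.e. $F\prec G'$. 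Applying the same reasoning to $f^{-1}$, the cone $\cC^E$ and the plane $E$ gives $\|Tf^{-n}\mid_{F}\|/\|Tf^{-n}\mid_{E'}\|\to 0$ exponentially; rewriting in forward time (using that $F$ and $E'$ are one-dimensional) this is $\|Tf^n\mid_{E'}\|/\|Tf^n\mid_{F}\|\to 0$, i.e. $E'\prec F$. Hence $E'\prec F\prec G'$ is a dominated splitting.

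\emph{Step 3: partial hyperbolicity.} Now assume \eqref{eq.partialhyperbolic}. Both sides are continuous positive functions on the compact $\TT^3$, so there are $\lambda_0<1<\mu_0$ with $|\det Tf\mid_{E(x)}|\le\lambda_0\|Tf\mid_{F(x)}\|$ and $|\det Tf\mid_{G(x)}|\ge\mu_0\|Tf\mid_{F(x)}\|$ for every $x$; multiplying along an orbit, $|\det Tf^n\mid_{E(x)}|\le\lambda_0^n\|Tf^n\mid_{F(x)}\|$ and $|\det Tf^n\mid_{G(x)}|\ge\mu_0^n\|Tf^n\mid_{F(x)}\|$. Since $E=E'\oplus F$ with $\angle(E',F)$ bounded below uniformly (continuity plus compactness, using $E'\neq F$), a Gram/wedge computation gives $|\det Tf^n\mid_{E(x)}|\ge c\,\|Tf^n\mid_{E'(x)}\|\,\|Tf^n\mid_{F(x)}\|$ for a uniform $c>0$; comparing with the upper bound and cancelling $\|Tf^n\mid_{F}\|$ yields $\|Tf^n\mid_{E'(x)}\|\le \lambda_0^n/c\to 0$ uniformly, so $E'$ is uniformly contracted. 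Symmetrically $|\det Tf^n\mid_{G(x)}|\le C^{-1}\|Tf^n\mid_{F(x)}\|\,\|Tf^n\mid_{G'(x)}\|$, which with the lower bound gives $\|Tf^n\mid_{G'(x)}\|\ge C\mu_0^n\to\infty$ uniformly, so $G'$ is uniformly expanded. Together with the domination $E'\prec F\prec G'$ of Step 2, after passing to an adapted Riemannian metric this is precisely condition \eqref{pointwise.ph} together with $\|Tf\mid_{E^s}\|<1$ and $\|Tf^{-1}\mid_{E^u}\|<1$, so $f$ is partially hyperbolic with $E^s=E'$, $E^c=F$, $E^u=G'$. The only non-formal ingredient is the uniform exponential cone-narrowing invoked in Steps 1--2, which is the standard cone-field criterion for domination; everything else is bookkeeping with one-dimensional bundles, where norm, conorm and $|\det|$ coincide.
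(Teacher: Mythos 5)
Your argument is correct, and the interesting divergence from the paper is in the second half. For the dominated splitting you invoke (and sketch) the standard strictly-invariant-cone criterion with its uniform exponential projective narrowing; the paper does the same thing by citing Bochi--Gourmelon and Crovisier--Potrie inside each of the invariant planes $E$ and $G$ separately and then concatenating the two dominations, so this part is essentially identical in content. For the promotion to partial hyperbolicity, however, the paper argues softly and by contradiction: if $E'$ were not uniformly contracted there would be a point whose Birkhoff sums of $\log\|Tf\mid_{E'}\|$ stay nonnegative, a weak-$*$ limit of its empirical measures gives an invariant measure $\mu$ with $\int\log\|Tf\mid_{E'}\|\,d\mu\ge 0$, and then Oseledets plus the identity $K_1+K_2=\int\log|\det Tf\mid_E|\,d\mu$ contradicts \eqref{eq.partialhyperbolic}; the expansion of $G'$ is handled symmetrically. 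You instead give a direct multiplicative estimate: compactness turns the strict pointwise inequalities into uniform constants $\lambda_0<1<\mu_0$, the one-dimensionality and invariance of $E'$, $F$, $G'$ let you write $|\det Tf^n\mid_E|$ and $|\det Tf^n\mid_G|$ as products of the two line norms up to the uniformly bounded angle distortion, and cancelling $\|Tf^n\mid_F\|$ yields explicit uniform rates $\|Tf^n\mid_{E'}\|\lesssim\lambda_0^n$ and $\|Tf^n\mid_{G'}\|\gtrsim\mu_0^n$. Your route is more elementary and quantitative (no invariant measures or Oseledets, and it produces explicit exponential bounds), at the price of the angle/determinant bookkeeping that the paper's ergodic argument sidesteps; both are complete proofs of the lemma.
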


We will apply this criterion to establish the partial hyperbolicity of $f_{k}$ for sufficiently large $k$, where $f_{k}$ is defined in ~\eqref{e.fk}. Recall that under the coordinate basis $\cB_k$, the derivative $T\psi_k$ has the form
$$
[T\psi_k]_{\cB_k}=
    \begin{pmatrix}
     1 & 0 & 0 \\
       C^{k}_1 &  C^{k}_2 & C^{k}_3 \\

      0 & 0 & 1
    \end{pmatrix}
$$
where $C^{k}_1, C^{k}_2$ and $C^{k}_3$ are the smooth functions of $[x,y,z]_{\cB_k}$ defined in ~\eqref{e.Tpsi}.
This means that 
$$
[Tf_k]_{\cB_k}=
    \begin{pmatrix}
      (\lambda^u_k)^2 & 0 & 0 \\
     \lambda^c_k\lambda^u_kC^{k}_1 &  (\lambda^c_k)^2 C^{k}_2 & \lambda^c_k\lambda^s_kC^{k}_3 \\
      0 & 0 & (\lambda^s_k)^2 \\
    \end{pmatrix}.
$$

\noindent We will let $E$ and $G$ be the $XY$ and $YZ$-plane, respectively in the $\cB_{k}$ coordinates. Then $E$ and $G$ are both invariant under $Tf_k$ and $F=E\cap G$ is the $y$-axis.

Recall that $b_k = \theta_k a_k$ with $\theta_k\in(0,(\lambda^c_k)^{-1})$, and $c_k = (\lambda^c_k)^{-2}-1$. Then we obtain the following bounds of $C^{k}_2$ from~\eqref{eq.boundinsidetube1} 
\begin{align*}
(\lambda^c_k)^{-2}\le C^{k}_2&\le 1+2\frac{\theta_k}{1-\theta_k}\left(1-(\lambda^c_k)^{-2}\right)\\
&\le 1+2\frac{(\lambda^c_k)^{-1}}{1-(\lambda^c_k)^{-1}}\frac{(\lambda^c_k+1)(\lambda^c_k-1)}{(\lambda^c_k)^{2}}\\
&=1+2\frac{\lambda^c_k+1}{(\lambda^c_k)^2}.
\end{align*}
Noting that $\lambda_k^c\searrow 1$ as $k\to\infty$, for $k$ large we have
\begin{equation}\label{e.C2}
(\lambda^c_k)^{-2}\le C^{k}_2\le 6.
\end{equation}

We first build the  cone family transverse to $F$ inside the fiber bundle $E$. 
Then we only have to consider the restriction of $Tf_k$ to $E$, which has the form:
$$[Tf_k\mid_{E}]_{\cB_k}=\left(
    \begin{array}{cc}
     (\lambda^u_k)^2  &  0\\
     \lambda^c_k\lambda^u_kC^{k}_1 &  (\lambda^c_k)^2C^{k}_2\\
    \end{array}
  \right).$$

For $K^u>0$ which will be specified later, we consider the cone inside $E = \RR^{XY}$:
$$\cC_{K^u}=\left\{\left[
    \begin{array}{c}
      u \\
      v \\
    \end{array}
  \right]_{\cB_k}: u\neq 0, \text{ such that } \frac{|v|}{|u|}\leq K^u\right\}.$$
  \medskip
  
\noindent {\bf Claim.} {\it For $K^u_{k}>0$ sufficiently large, $\cC_{K^u_{k}}$ is invariant under $Tf_k$, that is, $Tf_k(\cC_{K^u_{k}})\subset \Int(\cC_{K^u_{k}}).$ }

\medskip

{\em\noindent Proof of the claim}. For any $\left[
    \begin{array}{c}
      1 \\
      v \\
    \end{array}
  \right]_{\cB_k}\in \cC_{K^u}$, 
  
  $$Tf_k\left[
    \begin{array}{c}
      1 \\
      v \\
    \end{array}
  \right]_{\cB_k}=\left[
    \begin{array}{c}
  (\lambda^u_k)^2  \\
        \lambda^c_k\lambda^u_kC^{k}_1+ (\lambda^c_k)^2C^{k}_2v \\
    \end{array}
  \right]_{\cB_k}.$$
  
  By~\eqref{eq.boundinsidetube1} and~\eqref{e.C2}, we obtain
  \begin{align*}
&\left|\frac{\lambda^c_k\lambda^u_kC^{k}_1+ (\lambda^c_k)^2C^{k}_2v}{(\lambda^u_k)^2}\right|\\
\le &6\left(\frac{\lambda^c_k}{\lambda^u_k}\right)^2 K^u+8\frac{\lambda^c_k}{\lambda^u_k}\frac{b_k}{d_k}|c_k|\\
:= &\, \Theta_k K^u + M_k. 
  \end{align*}
Note that $\Theta_k = 6(\lambda^c_k/\lambda^u_k)^2\to0$ as $k\to\infty$. We see that the choice 
  \begin{equation}\label{e.Ku}
  K^u_{k} := \frac{2}{1-\Theta_k}M_k = \frac{16\frac{\lambda^c_k}{\lambda^u_k}\frac{b_k}{d_k}|c_k|}{1-6\left(\frac{\lambda^c_k}{\lambda^u_k}\right)^2}
  \end{equation}
  gives, for $k$ sufficiently large,
  $$
 \left| \frac{\lambda^c_k\lambda^u_kC^{k}_1+ (\lambda^c_k)^2C^{k}_2v}{(\lambda^u_k)^2}\right|\le \Theta_k K^u_{k} + M_k = \frac{2\Theta_k}{1-\Theta_k}M_k+M_k = \frac{1+\Theta_k}{1-\Theta_k}M_k<K^u_{k}.
  $$
This shows that $Tf_k([1,v]^T_{\cB_k})\in \Int (\cC_{K^u_{k}})$ for any $[1,v]^T_{\cB_k}\in \cC_{K^u_{k}}$, and consequently,
$$Tf(\cC_{K^u_{k}})\subset \Int(\cC_{K^u_{k}}).$$
The proof of the claim is finished.

Now we  build the  cone family transverse to $F$ inside the invariant bundle $G = \RR^{YZ}$. The construction is analogous, with
$$\cC_{K^s}=\left\{\left[
\begin{array}{c}
      u \\
      v \\
    \end{array}
\right]_{\cB_k}\mid v\neq 0, \text{ such that } \frac{|u|}{|v|}\leq K^s\right\}
$$
for some $K^s>0$.
  \medskip

\noindent {\bf Claim.} {\it For $K^s_{k}>0$ sufficiently large, $\cC_{K^s_{k}}$ is $Tf_k^{-1}$-invariant; that is, $Tf^{-1}(\cC_{K^s_{k}})\subset \Int(\cC_{K^s_{k}}).$ }

\medskip

{\em\noindent Proof of the claim}. The proof is similar to the previous case. Direct calculation shows that for any $\left[
    \begin{array}{c}
      u \\
      1 \\
    \end{array}
  \right]_{\cB_k}\in \cC_{K^s}$,

  $$Tf^{-1}_k\left[
    \begin{array}{c}
    u \\
      1 \\
    \end{array}
  \right]_{\cB_k}=\left[
    \begin{array}{c}
    \frac{u}{(\lambda^c_k)^2C^{k}_2}-\frac{C^{k}_3}{\lambda^c_k\lambda^s_kC^{k}_2}  \\
     (\lambda^s_k)^{-2}  \\
    \end{array}
  \right]_{\cB_k}.$$
This leads to 
  \begin{align*}
&\left|(\lambda^s_k)^2\left(\frac{u}{(\lambda^c_k)^2C^{k}_2}-\frac{C^{k}_3}{\lambda^c_k\lambda^s_kC^{k}_2}\right)\right|\\
\le & \left(\frac{\lambda^s_k}{\lambda^c_k}\right)^2(C^{k}_2)^{-1}|u| + \frac{\lambda^s_k}{\lambda^c_k}\frac{C^{k}_3}{C^{k}_2}\\
\le&\left(\lambda^s_k\right)^2 K^s + 8\lambda^s_k\lambda^c_k\frac{b_k}{d_k}|c_k|\\
:= &\, \Theta'_k K^s + M'_k,
  \end{align*}
where we apply again~\eqref{eq.boundinsidetube1} and~\eqref{e.C2} to obtain the last inequality. 
Then the choice of 
  \begin{equation}\label{e.Ks}
  K^s_{k} = \frac{2}{1-\Theta'_k}M'_k =  \frac{16\lambda^s_k\lambda^c_k\frac{b_k}{d_k}|c_k|}{1-(\lambda^s_k)^2}
  \end{equation}
suffices. This concludes the proof of the claim.

Now we are in a position to apply Lemma~\ref{l.dominated}, which shows that $f_k$ admits a dominated splitting $T\TT^3=E'\oplus F\oplus G'.$ To show that $f_k$ is indeed partially hyperbolic, we only need to verify~\eqref{eq.partialhyperbolic}. However, this is easy since 
\begin{align*}
|\det Tf_k\mid_{E}|=&| (\lambda^u_k\lambda^c_k)^2C_2|\\
>&|(\lambda^c_k)^2C_2|\\
=&|\det Tf_k\mid_{F}|.
\end{align*}
The other inequality from~\eqref{eq.partialhyperbolic} is similar and thus omitted. With Lemma~\ref{l.dominated} we conclude that $f_k$ is partially hyperbolic for $k$ sufficiently large.

\begin{remark}
Note that the size of the cones are not uniform in $k$. However, with extra conditions on $b_k$ and $d_k$ it is possible to choose the same cones for all $f_k$ with large enough $k$. See Lemma~\ref{l.separate} in the next subsection.
\end{remark}

\subsection{Robustly minimal unstable foliations}\label{ss.minimal}
By construction we have
$$
\cB_{f_k} =\{x: |\det (Tf_k\mid_{E^{cu}_{f_k}(x)})|\le (\lambda^u_k)^2\}= I_k
$$
which has zero leaf volume inside any unstable leaf.
It follows from Theorem~\ref{m.boundedpotention} that for all $k$ large enough, there exists a $C^1$ open neighborhood $\cU_k$ of $f_k$, such that for every $g\in\cU_k$, the stable foliation of $g$ is minimal. In particular, $f_k$ is robustly transitive.

In this subsection we will prove the following lemma, which immediately leads to Proposition~\ref{proposition} (c):

\begin{lemma}\label{l.7.5}
Under the assumptions of Proposition~\ref{proposition} and the extra condition that $|c_k|/d_k<C$ for some $C>0$ and for all $k$ large enough, there exist $C^1$ partially hyperbolic diffeomorphisms $g_k$ arbitrarily close to $f_k$ in the $C^1$ topology, such that $g_k$ has robustly minimal unstable foliation.
\end{lemma}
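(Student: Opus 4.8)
The plan is to obtain $g_k$ as a $C^1$-small perturbation of $f_k$, supported near the center segment $I_k$ of~\eqref{e.Ik}, designed so that $g_k$ — and every $C^1$-nearby map — falls into the configuration of Bonatti--D\'iaz--Ures~\cite{BDU} that yields robust minimality of the strong unstable foliation. Two facts make this feasible. First, by Proposition~\ref{proposition}(b) the map $f_k$ is an isometry along $E^{c}$ exactly on $I_k$, and $I_k\subset\mathcal T_{a_k,d_k}$ lies in the cylinder where $f_k$ already differs from $B_k=A_k^{2}$; hence one can modify the center dynamics on $I_k$ by an arbitrarily $C^1$-small perturbation without disturbing the strong bundles or the estimates of Section~\ref{ss.PH}. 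Second, partial hyperbolicity is $C^1$-open and, under the extra hypothesis $|c_k|/d_k<C$ together with Lemma~\ref{l.separate}, the cone widths of Section~\ref{ss.PH} stay bounded, so there is a $C^1$-neighborhood of $f_k$ of size \emph{uniform in $k$} on which partial hyperbolicity holds; moreover Theorem~\ref{m.boundedpotention} applied to $f_k$ provides a $C^1$-neighborhood on which $\cF^{s}$ is minimal. As long as $g_k$ stays close enough to $f_k$, robust partial hyperbolicity and robust minimality of $\cF^{s}_{g}$ are therefore automatic, and the only thing left to add is robust minimality of $\cF^{u}_{g}$.

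First I would carry out the perturbation. Working along the central axis of $\mathcal T_{a_k,d_k}$, I would replace the center-isometry of $f_k$ on $I_k$ by a $C^1$-close map having a hyperbolic fixed point $p$ in the interior of $I_k$ that is center-expanding and pushes the two halves of $I_k$ away from $p$; note that this only increases $|\det(Tg_k\mid_{E^{cu}})|$ off $p$, so $g_k$ still satisfies the hypothesis of Theorem~\ref{m.boundedpotention}. Choosing $a_k,b_k,d_k$ as in Proposition~\ref{proposition}, one arranges that a sub-segment $\gamma_k\subset I_k$ lying in the unstable set of $p$ is $\varepsilon(k)$-dense, with $\varepsilon(k)\to0$ as $k\to\infty$ by Lemma~\ref{l.separate} (this is where $\length(I_k)\to\infty$ is used). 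Since $g_k\in\cD^{1}(B_k)$, Corollary~\ref{c.csproduct} gives $\cF^{cu}_{g_k}(p)=\bigcup_{y\in\cF^{c}_{g_k}(p)}\cF^{u}_{g_k}(y)$, so $\gamma_k$ lies in a $cu$-leaf fibered by strong unstable leaves; using the uniform forward expansion along $E^{u}$, the $\varepsilon(k)$-density of $\gamma_k$, and the estimates of Section~\ref{s.sminimal}, one deduces — adapting the analysis in~\cite{BDU} of how a suitably dense center piece is carried into the unstable set of a periodic point — that $\overline{\cF^{u}_{g_k}(p)}$ contains the whole stable leaf $\cF^{s}_{g_k}(p)$. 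All the ingredients here — persistence of the hyperbolic point $p$, the domination $E^{s}\oplus E^{cu}$, the $\varepsilon(k)$-density of $\gamma_k$, the transversality estimates — are $C^1$-open, so the same conclusion holds for every $g$ in a neighborhood $\cU_k$ of $g_k$.

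It then remains to promote ``$\cF^{u}_{g}(p)$ accumulates on the full stable leaf through $p$'' to ``every strong unstable leaf of $g$ is dense''. This is precisely the propagation step of~\cite{BDU}: since $\cF^{s}_{g}$ is minimal, the leaf $\cF^{s}_{g}(p)$ is dense and hence $\overline{\cF^{u}_{g}(p)}=\TT^3$; and, pushing a small $u$-arc forward and using the $\varepsilon(k)$-dense center segment inside $\cF^{cu}_{g}$ together with the minimality of $\cF^{s}_{g}$ to reach an arbitrary open set, one concludes exactly as in~\cite{BDU} that $\overline{\cF^{u}_{g}(x)}=\TT^3$ for every $x$. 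Thus $\cF^{u}_{g}$ is minimal for every $g\in\cU_k$, which is the statement of the lemma, and, together with Proposition~\ref{proposition}(a), of Proposition~\ref{proposition}(c).

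The main obstacle I expect is the simultaneous bookkeeping. A single perturbation must (i) stay inside the uniform $C^1$-neighborhood on which partial hyperbolicity and robust minimality of $\cF^{s}$ are already guaranteed, (ii) keep $\cB(g)=\{x:|\det(Tg\mid_{E^{cu}(x)})|\le(\lambda^{u}_k)^{2}\}$ of zero leaf volume, so as not to leave the scope of Theorem~\ref{m.boundedpotention}, and (iii) genuinely realize the~\cite{BDU} configuration for $\cF^{u}$ — and all three must hold uniformly for every large $k$ while $\length(I_k)\to\infty$. The quantitative device reconciling the smallness required in (i)--(ii) with the global reach required in (iii) is the length--density estimate for center segments, Lemma~\ref{l.separate}: it is what converts ``$\length(I_k)\to\infty$'' into ``$I_k$ is $\varepsilon(k)$-dense with $\varepsilon(k)\to0$'', i.e. what allows an essentially local construction near $p$ to have global consequences for the strong unstable foliation.
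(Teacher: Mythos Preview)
Your approach differs substantially from the paper's, and the core step has a genuine gap.

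The paper proceeds via a \emph{complete $u$-section} (Definition~\ref{d.usection}) and then invokes Theorem~\ref{BDU} (from~\cite{BDU}) as a black box. Concretely, the perturbation $g_k$ is chosen so that $g_k(I_k)\subset\Int(I_k)$ --- the center dynamics on $I_k$ is made \emph{contracting}, the opposite of your center-expanding fixed point. This makes the $cs$-surface $T=\bigcup_{x\in I_k}\cF^s_{g_k,R}(x)$ forward invariant with $\omega(T)\subset I_k$. Completeness --- that every strong unstable leaf meets $\Int(T)$ --- is established by a direct cone computation in a single fundamental domain of the universal cover: the hypothesis $|c_k|/d_k<C$ feeds into~\eqref{e.K} to give $K^u_k,K^s_k=o(b_k)$, so the $s$-saturate of $I_k$ contains the slab of~\eqref{eq.cs}, wide enough to catch the $u$-leaf of any $\zeta$ in $D\subset[-\tfrac23,\tfrac23]^3$ (see~\eqref{e.11} and~\eqref{eq.u}). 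The Hausdorff density of $\pi(I_k)$ in $\TT^3$ is \emph{not} used here; Lemma~\ref{l.separate} enters only to confirm that the bound $|c_k|/d_k<C$ can actually be realized, and to prove the separate ``moreover'' clause of Proposition~\ref{proposition}.

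By contrast, you create a center-\emph{expanding} fixed point $p$ and try to show $\overline{\cF^u_{g_k}(p)}\supset\cF^s_{g_k}(p)$ from the $\varepsilon(k)$-density of a center segment $\gamma_k\subset\cF^{cu}(p)$. This step is not justified: density of $\gamma_k$ says something about the two-dimensional $cu$-leaf, not about the single one-dimensional strong unstable leaf $\cF^u(p)$; since $p$ is fixed, iterating $\cF^u(p)$ forward returns $\cF^u(p)$, and nothing in the sketch forces it to accumulate on $\cF^s(p)$. The ``propagation step of~\cite{BDU}'' you invoke \emph{is} the $u$-section mechanism, which needs a forward-invariant $cs$-surface meeting every $u$-leaf --- precisely what a center-expanding perturbation destroys rather than creates. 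Even if one granted $\overline{\cF^u(p)}=\TT^3$, minimality of $\cF^s$ alone does not promote this to density of \emph{every} strong unstable leaf; one still needs the $u$-section (or an equivalent trapping device) to pull an arbitrary $u$-arc toward the orbit of $p$. Finally, the estimates of Section~\ref{s.sminimal} concern forward growth of $cu$-disks and serve the proof of $\cF^s$-minimality; they do not supply the missing $\cF^u$-mechanism.
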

 
We need the following definition:
\begin{definition}\label{d.usection}
	For a 3-dimensional partially hyperbolic diffeomorphism $g$, a {\it  $u$-section} of $g$ is  a compact surface $T$ with boundary such that $g(T)\subset \Int(T)$, and $\omega(T):=\cap_{n\geq0}g^n(T)$ is a finite union of center segments; if, in addition,   $\Int(T)$ has non-empty transversal intersection with every strong unstable leaf, then we say that $T$ is a  {\it complete $u$-section}.
\end{definition}

The forward invariance  implies that $T$ is tangent to $E^{cs}_{g}$; the set of diffeomorphisms having a complete $u$-section is $C^1$ open, see \cite[Proposition 3.1]{BDU}. 
 
We also enunciate the following  theorem of Bonatti, D\'iaz, and Ures.

\begin{theorem}\cite[Theorem 1.9]{BDU}\label{BDU} 
Let $M$ be a 3-dimensional compact Riemannian manifold and let $\mathcal V$ be an open subset of $\operatorname{Diff^1}(M)$ such that for every $g\in \mathcal V$:
\begin{itemize}
\item $g$ partially hyperbolic,
\item $g$ is transitive,
\item $g$  has a complete $u$-section.
\end{itemize}

Then there is a $C^1$ open and dense subset $\mathcal W\subset \mathcal V$ such that for every diffeomorphism in $\mathcal W$, the strong unstable foliation is minimal.

\end{theorem}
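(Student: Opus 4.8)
The plan is to reduce minimality of the strong unstable foliation $\cF^u_g$ to the existence of a single sufficiently dense unstable leaf, to produce such a leaf by a connecting-lemma perturbation, and to extract both openness and density from the robustness of the complete $u$-section.

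First I would record the geometry of the $u$-section. Fix $g\in\mathcal V$ and a complete $u$-section $T=T_g$, and set $\Lambda_g=\bigcap_{n\ge0}g^n(T)$. Since $g(T)\subset\Int(T)$ one has $g(\Lambda_g)=\Lambda_g$, and by definition $\Lambda_g$ is a finite union of compact center segments; as $g$ permutes them, a power of $g$ fixes each segment, and, being a homeomorphism of an interval, has a fixed point there, so $\Lambda_g$ carries a finite set $\mathcal O_g$ of periodic orbits. Two elementary facts will be used repeatedly: (i) every $y\in\Int(T)$ satisfies $\omega_g(y)\subseteq\Lambda_g$, because $g^m(y)\in g^m(T)\subseteq g^n(T)$ whenever $m\ge n$, so the forward orbit of every point of $\Int(T)$ accumulates on $\mathcal O_g$; and (ii) strong unstable plaques have a uniform interior size $\delta=\delta_g>0$. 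The property of having a complete $u$-section is $C^1$-open (\cite{BDU}), and $T_g,\Lambda_g,\mathcal O_g,\delta_g$ vary semicontinuously with $g$, so all of this persists on a $C^1$-neighborhood of $g$.

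The technical core — and the step I expect to be the main obstacle — is a robust spreading lemma, essentially contained in \cite{BDU}: for each $g\in\mathcal V$ there are $\varepsilon_0=\varepsilon_0(g)>0$ and a $C^1$-neighborhood $\cU_g\subseteq\mathcal V$ of $g$ such that, for every $g'\in\cU_g$, if some leaf $L$ of $\cF^u_{g'}$ is $\varepsilon_0$-dense in $M$ then $\cF^u_{g'}$ is minimal. The mechanism: by (i) the forward orbit of every $y\in\Int(T_{g'})$ accumulates on $\mathcal O_{g'}$; since $L$ is $\varepsilon_0$-dense it passes within $\varepsilon_0$ of every point of $\mathcal O_{g'}$, and — $\varepsilon_0$ being small compared to $\delta_g$ and to the size of $T_g$, which is exactly what makes the statement robust — continuity of the strong unstable holonomy forces $L$ to contain $\delta$-sized unstable disks $\varepsilon_0$-close to $W^u_\delta(q)$ for every $q\in\mathcal O_{g'}$; the absorbing structure of $T_{g'}$ then lets one transport this local denseness, with uniformly controlled geometry, along the $g'$-invariant set $\bigcup_{x\in\Lambda_{g'}}\cF^u_{g'}(x)$, and, since every unstable leaf meets $\Int(T_{g'})$ and accumulates in forward time on $\mathcal O_{g'}$, one concludes $\overline{\cF^u_{g'}(z)}=M$ for every $z\in M$. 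Transitivity alone yields a dense leaf only $C^1$-generically and with no control under perturbation; it is the complete $u$-section that upgrades ``one $\varepsilon_0$-dense leaf'' to ``every leaf dense'', robustly.

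Granting the spreading lemma, the theorem follows. Openness: if $\cF^u_g$ is minimal then every leaf is dense, so some leaf contains a compact chain of unstable plaques which is $(\varepsilon_0(g)/2)$-dense; for $g'$ $C^1$-close to $g$ this chain persists inside some leaf of $\cF^u_{g'}$ and stays $\varepsilon_0(g)$-dense, so after shrinking the neighborhood so that $g'\in\cU_g$ and $\varepsilon_0(g')\ge\varepsilon_0(g)/2$ the spreading lemma shows $\cF^u_{g'}$ is minimal; hence $\{g\in\mathcal V:\cF^u_g\text{ is minimal}\}$ is $C^1$-open in $\mathcal V$. Density: given $g\in\mathcal V$, the transitivity of $g$ together with a connecting-lemma argument (cf.\ the perturbative techniques of \cite{BDU}) produces an arbitrarily $C^1$-small perturbation $g'\in\mathcal V$ for which some strong unstable leaf is dense — one successively connects a local unstable disk along longer and longer pieces of a dense orbit and passes to a limit — hence a fortiori $\varepsilon_0(g')$-dense, so by the spreading lemma $\cF^u_{g'}$ is minimal. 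Taking $\mathcal W$ to be the open set just identified gives the required $C^1$-open and dense subset of $\mathcal V$ on which the strong unstable foliation is minimal.
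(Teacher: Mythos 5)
This statement is quoted from \cite[Theorem 1.9]{BDU}; the paper under review gives no proof of it, so your argument has to stand on its own, and as written it has two genuine gaps, one of which is fatal to the structure of the proof. The fatal one is the ``robust spreading lemma'': the assertion that a single $\varepsilon_0$-dense unstable leaf forces minimality of the whole foliation is exactly the hard content of the theorem, and the mechanism you sketch does not close logically. From ``every leaf $L'$ meets $\Int(T)$ and the intersection point converges forward to $\Lambda_{g'}$'' you can only conclude that the \emph{forward images} $g'^n(L')$ contain plaques converging to local unstable plaques of points of $\Lambda_{g'}$, i.e.\ that the union of forward iterates of $L'$ accumulates on $W^u(\Lambda_{g'})$; this says nothing about the density of $L'$ itself, and pulling back by $g'^{-n}$ destroys any density estimate. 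Note that the closure of a single leaf is not $g'$-invariant, so density cannot be transferred along the orbit of a leaf; the standard way around this (used in \cite{BDU} and in Section~\ref{s.sminimal} of this paper for the stable foliation) is to prove the \emph{invariant} statement ``a fixed open set $U$ meets every leaf'': if some iterate of a disk $D\subset U$ meets every leaf, then so does $D$, because the family of all leaves is invariant. Your phrase ``transport this local denseness \dots along $\bigcup_{x\in\Lambda_{g'}}\cF^u_{g'}(x)$'' hides precisely this missing step; what the $u$-section naturally reduces minimality to is density of $W^u(\Lambda_{g'})$ (the unstable set of the trapped center segments), and passing from ``one $\varepsilon_0$-dense leaf'' to that, robustly, also requires an argument you do not supply. (A smaller inaccuracy in the same place: the $\omega$-limit of a point of $\Int(T)$ lies in $\Lambda_g$, but need not accumulate on a \emph{finite} set of periodic orbits; the return maps on the segments can have, e.g., arcs of fixed points.)

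The density half is also not a proof. Invoking ``a connecting-lemma argument'' that ``successively connects a local unstable disk along longer and longer pieces of a dense orbit and passes to a limit'' is not an off-the-shelf consequence of the $C^1$ connecting lemma: each connection is a separate perturbation, and controlling a single strong unstable leaf of the limiting diffeomorphism through infinitely many perturbations is a genuine difficulty -- producing even one dense strong unstable leaf is nontrivial (compare the discussion of \cite{HU2019} in the introduction, where mere transitivity of the foliation for Anosov maps already requires work). In \cite{BDU} the density part is obtained differently, by perturbing inside the trapping region so that the unstable manifolds of the periodic center segments become (robustly) dense, using transitivity together with the absorbing structure of $T$; that is where the real work lies, and your proposal replaces it with an assertion. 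So the overall architecture (reduce to a quantitative density statement, then get openness by persistence of compact leaf pieces and density by perturbation) is reasonable and roughly parallel to \cite{BDU}, but both pillars -- the spreading lemma and the perturbative creation of the dense object -- are stated rather than proved, and the first is argued by a leafwise iteration that does not yield minimality.
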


\begin{proof}[Proof of Lemma~\ref{l.7.5}]

We have already proved that $f_k$ is robustly transitive for large enough $k$.  Below we will show that under an arbitrarily small perturbation, a complete $u$-section can be created. 

Take a fundamental domain $D$ of $\TT^3$ in its universal covering. Recall from ~\eqref{base.k} that for $k$ sufficiently large the coordinate basis $\cB_k$ is close to the standard basis of $\RR^3$. Then we can assume that under the coordinate basis $\cB_k$, $D\subset [-\frac23,\frac23]^3\subset \mathbb{R}^3$. Let $\tilde \cF^\sigma_{f_k}$ be the lift of the foliation $\cF^\sigma_{f_k}$ to $\RR^3$, for $\sigma = s,u,c,cs,cu$.

We will apply Theorem \ref{BDU} to obtain the minimality of the strong unstable foliation. To that end, we will construct a complete $u$-section. In the next claim, we begin by constructing a non-compact local $u$-section in the universal cover. 

\medskip

\noindent{\bf Claim.} 
{\it For any $\zeta\in D$,
 $\tilde\cF^u_{f_k}(\zeta)\cap \bigcup_{x\in \Int(I_k)} \tilde\cF^s_{f_k}(x)\neq \emptyset$. }
 
 \medskip

 Let us assume for the moment that the claim is true. 
 Then the map $\psi_{f_{k}}$ that sends $\zeta\in  [-\frac23,\frac23]^3$ to the intersection between $\tilde\cF^u_{f_{k}}(\zeta)$ and $\bigcup_{x\in \Int(I_k)} \tilde\cF^s_{f_k}(x)$ is continuous with respect to $\zeta$ and with respect to $f_{k}$, when $f_{k}$ varies in the $C^{1}$-topology. $\psi_{f_{k}}$ is well defined because of the global product structure on $\RR^3$. As $f_k$ restricted to $I_k$  is the identity, there exists a perturbation $g_k$, arbitrarily close to $f_k$, such that $g_k(I_k)\subset\Int(I_k)$ and that $\omega(I_k) = \cap_{n\ge 0}g^n_k(I_k)$ is a subsegment of $I_k$. Then we denote by $T_{g_{k},R}$ the projection of $\tilde T_{g_{k},R}=\bigcup_{x\in I_k} \tilde\cF^s_{g_k,R}(x)$ to $\TT^3$; here for $R>0$, $\tilde \cF_{g_k,R}^s(x)$ denotes the $R$-ball in the leaf $\tilde \cF^s_{g_k}(x)$.  Since $D$ is compact and $\psi_{f_{k}}$ is continuous, $\psi_{f_{k}}(D)$ is compact, so for $R>0$ large enough $\tilde T_{f_{k},R}$ contains $\psi_{f_{k}}(D)$. The continuity of $\psi_{(.)}(.)$ implies that for sufficiently close $g_{k}$, $\tilde T_{g_{k},R}$ contains $\psi_{g_{k}(D)}$. Therefore, for all $x\in \TT^{3}$, its unstable leaf $\cF^{u}_{g_{k}}(x)$ intersects the projection $T_{g_{k},R}$ of $\tilde T_{g_{k},R}$. It is clear now that $T_{g_{k},R}$ is a complete $u$-section. The proof of Lemma~\ref{l.7.5} is then  complete. 
 
 It only remains to prove the claim.
 
\medskip
\noindent {\em Proof of the claim.} Firstly note that  by construction $f_k$  preserves both  $E^{cs}_{A_k} = \RR^{YZ} $ and $E^{cu}_{A_k}  = \RR^{XY}$. Secondly, recall   that under the coordinate basis $\cB_k$ we have $$I_k=\{0\}\times [-b_k,b_k]\times \{0\}.$$

From the previous subsection, the stable bundle is contained in the cone
$$\cC^s_{k}=\left\{\left[
    \begin{array}{c}
      0 \\
      u \\
      v \\
    \end{array}
  \right]_{\cB_k}: v\neq 0 \text{  and  }\frac{|u|}{|v|}\leq K^s_{k}\right\};$$ 
similarly, the unstable bundle is contained in the cone: 
 $$\cC^u_{k}=\left\{\left[
     \begin{array}{c}
     u \\
       v \\
      0 \\
     \end{array}
   \right]_{\cB_k}: u\neq 0 \text{  and  }\frac{|v|}{|u|}\leq K^u_{k}\right\}.$$ 
Since we assume, in addition, that $|c_k|/d_k<C$, ~\eqref{e.Ku} and~\eqref{e.Ks} become:
\begin{equation}\label{e.K}
\begin{split}
K^u_{k} &= \frac{\lambda^c_k}{\lambda^u_k}\cdot\frac{16|c_k|/d_k}{1-6\left(\frac{\lambda^c_k}{\lambda^u_k}\right)^2}\cdot b_k \le  \tilde C\frac{\lambda^c_k}{\lambda^u_k} b_{k},\\
K^s_{k} &= \lambda^s_k\lambda^c_k\cdot \frac{16|c_k|/d_k}{1-(\lambda^s_k)^2}\cdot b_k \le \tilde C \lambda^s_k\lambda^c_k b_{k};
\end{split}
\end{equation}
here $\tilde C>0$ is a constant independent of $k$. Since $\lambda^c_k/\lambda^u_k\searrow 0$ and $\lambda^s_k\lambda^c_k\searrow 0$ as $k\to\infty$,~\eqref{e.K} shows that
$$
\max\{K^s_{k},K^u_{k}\}= o(b_k).
$$

Since $\tilde\cF^s$ is tangent to the stable cone, for $k$ large we have
\begin{equation}\label{eq.cs}
\{0\}\times [-(1-o(1))b_k,(1-o(1))b_k]\times  [-1,1] \subset \bigcup_{x\in I_k} \tilde\cF^s_{f_k}(x).
\end{equation}

Now, for any $\zeta=[\zeta_x,\zeta_y,\zeta_z]_{\cB_k}\in D$, since the unstable leaf $\tilde \cF^u(\zeta)$ is tangent to the unstable $K^u_{k}$-cone, we have
\begin{equation}\label{e.11}
\tilde\cF^u(\zeta)\cap \mathbb{R}^{YZ}\in \{0\} \times [\zeta_y-o(b_k),\zeta_y+o(b_k)]\times \{\zeta_z\}.
\end{equation}
For $k$ sufficiently large, taking into account that  $|\zeta_y|\leq \frac{2}{3}$, and that $b_k>1$ (recall the choice of $\theta_k\in(0,(\lambda^c_k)^{-2})$ such that $b_k = \theta_ka_k>1$ at beginning of Section~\ref{ss.7.3}), we see that the set $\{0\} \times [\zeta_y-o(b_k),\zeta_y+o(b_k)]\times \{\zeta_z\}$  of~\eqref{e.11} is contained in the set $\{0\}\times [-(1-o(1))b_k,(1-o(1))b_k]\times  [-1,1]$ of~\eqref{eq.cs}, which implies that
\begin{equation}\label{eq.u}
\tilde\cF^u(y)\cap \mathbb{R}^{YZ} \in \bigcup_{x\in I_k} \tilde\cF^s_{f_k}(x).
\end{equation}
Now the proof of the  claim is complete, and Lemma~\ref{l.7.5} follows.
\end{proof}


\subsection{Center leaves of $A_k$}\label{ss.density}
In this subsection, we provide a density estimation of a long center leaf segment, which also justifies the possibility of having $|c_k|/d_k<C$.
Let us recall from Section~\ref{ss.7.3} that
$$
c_k = (\lambda^c_k)^{-2}-1\in(-1,0).
$$
Furthermore, $d_k>0$ is taken such that the projection from $\RR^3$ to $\TT^3$ is injective on the cylinder $\mathcal{T}_{a_k,d_k}$.  
Since 
$$
|c_k| = 1-\frac{1}{(\lambda^c_k)^2} = \frac{\lambda^c_k+1}{(\lambda^c_k)^2}(\lambda^c_k-1) = \mathcal{O}(\lambda^c_k-1),
$$
it suffices to prove the following lemma.

\begin{lemma}\label{l.separate} Let $v^{c}_{k}=\left(1,\frac{1}{\lambda^c_k-1}, \frac{1}{\lambda^c_k}\right)$.
For 
$$
a_k = \frac{1}{2}\left[\frac{1}{\lambda^c_k-1}\right]\cdot |v^c_k|\,\,\mbox{ and }\, d_k = \frac{\lambda^c_k-1}{4},
$$
the center segment
$
J_k = \left\{te^c_k: t\in [-a_k,a_k]\right\}
$, where $e^{c_{k}}=v^{c}_{k}/|v^{c}_{k}|$, 
satisfies 
$$
d(J_k,J_k+\mathbf n) >2d_k,
$$
for all $\mathbf n\in\ZZ^3\setminus \{0\}$. Therefore, $\pi:\RR^3\to\TT^3$ is injective on $\mathcal{T}_{a_k,d_k}$.  Moreover, it holds that
$$\label{eq.location}
\pi(J_k) \xrightarrow{k\to \infty} \TT^3 \mbox{ in Hausdorff topology}.
$$
\end{lemma}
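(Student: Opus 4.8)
The plan is to first recast the asserted separation as a Diophantine statement about the center eigendirection, verify it by an arithmetic analysis of $\lambda^c_k$, and then deduce the Hausdorff density by an equidistribution/pigeonhole argument. Write $\epsilon_k=\lambda^c_k-1$ and $M_k=\big[\tfrac1{\epsilon_k}\big]$ (integer part), so that $J_k=\{s\,v^c_k:\ |s|\le M_k/2\}$ and $d_k=\epsilon_k/4$. Since $J_k$ is the $v^c_k$-segment whose parameter ranges over $[-M_k/2,M_k/2]$, the difference of two of its points has parameter in $[-M_k,M_k]$, so
$$d(J_k,J_k+\mathbf n)=\min_{|s|\le M_k}\,\big|s\,v^c_k-\mathbf n\big|,$$
and the claim $d(J_k,J_k+\mathbf n)>2d_k$ for all $\mathbf n\in\ZZ^3\setminus\{0\}$ is equivalent to: $|s\,v^c_k-\mathbf n|>\epsilon_k/2$ whenever $|s|\le M_k$ and $\mathbf n\ne 0$. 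Granting this, $\mathcal T_{a_k,d_k}$ is contained in the closed $d_k$-neighborhood of $J_k$, and that neighborhood is disjoint from each of its nontrivial $\ZZ^3$-translates exactly when $d(J_k,J_k+\mathbf n)>2d_k$ for all $\mathbf n\ne 0$; hence $\pi$ is injective on $\mathcal T_{a_k,d_k}$.

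The arithmetic input comes from $p_k(x)=x^3-kx^2+(k+1)x-1$: substituting $x=1+y$ gives $y_k^3+(3-k)y_k^2+(4-k)y_k+1=0$ for $y_k=\epsilon_k$, whence $\tfrac1k<\epsilon_k<\tfrac2k$ for $k$ large and, dividing by $y_k$,
$$\frac1{\epsilon_k}=(k-4)+g_k,\qquad g_k:=(k-3)\epsilon_k-\epsilon_k^2\in(\tfrac12,1),\qquad 1-g_k=\mathcal O(\epsilon_k^2);$$
in particular $M_k=k-4$, and the decisive feature is that $\tfrac1{\epsilon_k}$ lies within $\mathcal O(\epsilon_k^2)$ \emph{below} the integer $k-3$. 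Now suppose $|s\,v^c_k-\mathbf n|\le\epsilon_k/2$ with $|s|\le M_k=k-4$ and $\mathbf n=(n_1,n_2,n_3)\ne0$. Because the first coordinate of $v^c_k$ is $1$, the first-coordinate estimate gives $|s-n_1|\le\epsilon_k/2$, so $|n_1|\le k-4$; by the symmetry $(\mathbf n,s)\mapsto(-\mathbf n,-s)$ assume $n_1\ge0$. If $n_1=0$ then $|s|\le\epsilon_k/2$ and the second- and third-coordinate estimates force $n_2=n_3=0$, contradicting $\mathbf n\ne0$. If $1\le n_1\le k-4$, write $s=n_1+\tau$, $|\tau|\le\epsilon_k/2$; expanding $s/(1+\epsilon_k)$ in the third-coordinate estimate and using $n_1\epsilon_k\le(k-4)\epsilon_k=1-g_k\epsilon_k<1$ shows $n_1-n_3\in\{0,1\}$. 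If $n_1-n_3=0$, the third-coordinate estimate forces $n_1=1$; the second-coordinate estimate then forces $|\tau|=\mathcal O(\epsilon_k^2)$ (here one uses $1-g_k=\mathcal O(\epsilon_k^2)$), while the third-coordinate estimate forces $\tau\in[\tfrac{\epsilon_k(1-\epsilon_k)}2,\tfrac{\epsilon_k(1+\epsilon_k)}2]$, which is impossible for $k$ large. If $n_1-n_3=1$, the third-coordinate estimate forces $n_1=k-4=M_k$, hence $\tau\le0$ (as $|s|\le k-4$), and then $\big|\tfrac s{1+\epsilon_k}-n_3\big|\ge\tfrac{(1+g_k)\epsilon_k-\epsilon_k/2}{1+\epsilon_k}>\epsilon_k/2$, a contradiction. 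This proves the separation, and with it the first assertion of the Lemma.

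For $\pi(J_k)\to\TT^3$ in the Hausdorff topology: the integer-parameter points $\pi(j\,v^c_k)$ with $|j|\le M_k/2$ have first coordinate $0$ and third coordinate $\{j/\lambda^c_k\}$; since consecutive values move by $\epsilon_k/(1+\epsilon_k)=\mathcal O(1/k)$ and $\tfrac{M_k}{2}\cdot\tfrac{\epsilon_k}{1+\epsilon_k}\to\tfrac12$, the set of these third coordinates is $\mathcal O(1/k)$-dense in $\RR/\ZZ$, and likewise $\{j\epsilon_k\bmod 1:\ |j|\le M_k/2\}$ is $\mathcal O(1/k)$-dense. Between consecutive integer parameters the center-leaf segment $\{\pi((j+t)v^c_k):t\in[0,1]\}$ has first coordinate $t$ (one full loop), third coordinate making one loop up to an $\mathcal O(1/k)$ drift, and second coordinate making $[\,1/\epsilon_k\,]\approx k$ loops. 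Hence, given $q=(q_1,q_2,q_3)\in\TT^3$, one first selects $j$ so that along $[\pi(j v^c_k),\pi((j+1)v^c_k)]$ the third coordinate is within $\mathcal O(1/k)$ of $q_3$ at the instant the first coordinate equals $q_1$ (possible since the relevant offsets are $\mathcal O(1/k)$-dense), and then, letting $t$ vary over an interval of length $\eta$ around $q_1$, uses that the second coordinate sweeps $\approx k\eta$ loops to land within $\mathcal O(1/(k\eta))$ of $q_2$ while keeping the first and third coordinates within $\mathcal O(\eta+1/k)$ of $q_1,q_3$. Optimizing $\eta\sim k^{-1/2}$ shows $\pi(J_k)$ is $\mathcal O(k^{-1/2})$-dense, which proves the last assertion.

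The main obstacle is the case analysis in the middle step: the bound is genuinely tight, because $M_k=k-4$ while the center line's first approach within $\epsilon_k/2$ of a nonzero lattice point occurs at parameter just above $k-4$ (near $k-3$). The argument therefore depends essentially on the fine location $\tfrac1{\lambda^c_k-1}=(k-3)-\mathcal O((\lambda^c_k-1)^2)$, and the $\mathcal O$-terms in the three coordinate estimates must be tracked carefully enough to see that the quantities in question strictly exceed the threshold $\epsilon_k/2$ rather than merely meeting it.
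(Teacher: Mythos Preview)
Your proof is essentially correct but takes a genuinely different route from the paper. The paper argues geometrically: it projects $J_k$ (translated to $\ell_k$) onto the $xz$--plane, reads off the pattern of parallel segments of slope $1/\lambda^c_k$ in the unit square, shows that consecutive segments are $\lambda^c_k-1$ apart and hence more than $2d_k$ apart, and then, inside each strip $S_0=\{(t,s,t/\lambda^c_k)\}$ over one such segment, does a second elementary billiard computation in the $y$--direction to bound the minimal distance between components by $\sin\beta_k>\tfrac12(\lambda^c_k-1)=2d_k$. In particular the paper never needs the characteristic polynomial: the only inputs are $\lambda^c_k\in(1,2)$ and $\lambda^c_k\to1$, so the argument would apply to \emph{any} center eigenvalue with this asymptotic.

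Your approach, by contrast, recasts the separation as a simultaneous approximation problem $|sv^c_k-\mathbf n|>\epsilon_k/2$ and solves it by a coordinate-by-coordinate case analysis. This works, but it rests on a fact the paper does not use: the precise arithmetic $1/\epsilon_k=(k-4)+g_k$ with $g_k\in(1/2,1)$ and $1-g_k=\epsilon_k^2/(1+\epsilon_k)$, extracted from $p_k(1+y)=0$. That input is what makes the case $n_1=1$, $n_3=1$ collapse (the second-coordinate constraint pins $\tau$ to $O(\epsilon_k^2)$ while the third forces $\tau\ge\epsilon_k(1-\epsilon_k)/2$), and it is also what rules out $n_1-n_3=1$ (indeed this case is already vacuous once you note $n_1\ge 1/\epsilon_k-\epsilon_k/2>k-4$, so the further computation you give there is unnecessary). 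The trade-off: your argument is shorter and entirely algebraic, but it is specific to the family $A_k$; the paper's argument is more robust and yields the sharper density rate $O(\lambda^c_k-1)=O(1/k)$, whereas your interpolation step gives only $O(k^{-1/2})$. (In fact your density argument can be tightened: since the second coordinate moves continuously through $\approx k\eta$ full loops, you can hit $q_2$ exactly, so the error is $O(\eta+1/k)$ and taking $\eta\asymp 1/k$ already gives $O(1/k)$-density; the $k^{-1/2}$ optimization is not needed.)
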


\begin{proof}
From now on we will use the standard coordinate basis of $\RR^3$. To simplify notation, we define
$$
\tak = \frac{a_k}{|v^c_k|} = \frac12 \left[\frac{1}{\lambda^c_k-1}\right].
$$
Then
$$
J_k = \left\{ tv^c_k: t\in[-\tak,\tak]\right\}.
$$
We start with two observations. 
The first observation is that the distance between $J_k$ and $J_k + \mathbf n$ is invariant by translations and then, for the sake of simplicity, we will consider the segment
$$\ell_k:=\{t v^c_k: t\in[0,2\tak]\}$$ instead of $J_k$.  So, we will find a lower bound for $d(\ell_{k},\ell_{k}+\mathbf{n})=d(J_{k},J_{k}+\mathbf{n})$. The second observation is that, since we are taking integer translations,  we only need to compute the distances in $I^3=[0,1]\times[0,1]\times[0,1]$. Call $I^{2}=[0,1]\times [0,1].$
 
Call $P_Y$ the orthogonal  projection onto  the $xz$-plane; i.e.,
$$
P_Y(x,y,z) = (x,z).
$$ 
Then the projection of $\ell_k$ on $\RR^{XZ}$ is $$\ell^Y_k=
\left\{ P_Y(tv^c_k)= t \left(1,\frac{1}{\lambda^c_k}\right):t\in[0,2\tak] \right\}\subset\RR^{XZ}.$$

We claim that $\ell^Y_k \mod \ZZ^{2}\subset I^2\subset \RR^{XZ}$ only intersects the diagonal $\{(x,z)\in I^2:z=x\}$ at $t=0$. More precisely, $\ell^Y_k\setminus\{0,0\}$ lies strictly between the lines $z=x$ and $z=x-1$. This is because on the one hand $\lambda^{c}_{k}>1$, so $\frac{t}{\lambda^{c}_{k}}\ne t$ for $t\ne 0$. On the other hand, 
$$
\frac{1}{\lambda^c_k}t = t-1 \implies t= \frac{\lambda^c_k}{\lambda^c_k-1} > 2\tak.
$$
Therefore, $\ell^Y_k \mod \ZZ^{2}$ consists of two families of connected components: those above the diagonal, and those below it. Furthermore,  $\ell^Y_k$ hits the horizontal lattice $\{z=n\}$ and vertical lattice $\{x=n\}$ in an alternating fashion. See Figure \ref{projection}.  To describe these segments more precisely we  calculate the intersections of the integer translations of $\ell^Y_k$    with the boundary of the square $\partial I^2$.

\begin{figure}[h!]

\tikzset{every picture/.style={line width=0.75pt}} 

\begin{tikzpicture}[x=0.75pt,y=0.75pt,yscale=-1,xscale=1]

\draw  (48,252.8) -- (434,252.8)(86.6,8) -- (86.6,280) (427,247.8) -- (434,252.8) -- (427,257.8) (81.6,15) -- (86.6,8) -- (91.6,15)  ;
\draw (430, 264) node {$x$};
\draw (76, 25) node {$z$};

\draw    (86,65) -- (307,64) ;

\draw    (307,64) -- (307,252) ;

\draw    (86.6,252.8) -- (306,109) ;

\draw  [dash pattern={on 4.5pt off 4.5pt}]  (87,109) -- (306,109) ;

\draw    (87,109) -- (153,66) ;

\draw  [dash pattern={on 4.5pt off 4.5pt}]  (153,66) -- (153,251) ;

\draw    (153,251) -- (307,145) ;

\draw  [dash pattern={on 4.5pt off 4.5pt}]  (88,146) -- (307,145) ;

\draw    (88,146) -- (210,65) ;

\draw  [dash pattern={on 4.5pt off 4.5pt}]  (210,65) -- (211,251) ;

{\color{red}\draw  [dash pattern={on 2.5pt off 2.5pt}]  (307,65) -- (86,252) ;}

\draw    (211,251) -- (308,186) ;

\draw  [dash pattern={on 4.5pt off 4.5pt}]  (87,188) -- (308,186) ;

\draw    (87,188) -- (268,66) ;

\draw  [dash pattern={on 4.5pt off 4.5pt}]  (268,66) -- (267,252) ;

\draw    (306,225) -- (267,252) ;

\draw (52, 266) node {$x_0 = (0,0)$};
\draw (72,108) node {$\frac{1}{\lambda^{c}_{k}}$};
\draw (36, 147) node {$\frac{1}{\lambda^{c}_{k} }( 1-( \lambda^{c}_{k} -1))$};
\draw (33, 188) node {$\frac{1}{\lambda^{c}_{k} }( 1-2( \lambda^{c}_{k} -1))$};
\draw (155, 52) node {$x_1$};
\draw (210, 52) node {$x_2$};
\draw (267, 52) node {$x_3$};

\draw (324, 52) node {$(1,1)$};

\draw (324,108) node   {$z_0$};
\draw (120,127) node  [align=left] {};
\draw (149,269) node   {$\lambda^{c}_{k} -1$};
\draw (212,269) node   {$2( \lambda^{c}_{k} -1)$};
\draw (279,268) node   {$3( \lambda^{c}_{k} -1)$};
\draw (324,147) node   {$z_1$};
\draw (324,188) node   {$z_2$};
\draw (324,226) node   {$z_3$};

\end{tikzpicture}
 \caption{Orthogonal projection of $\ell^{Y}_k$ to the $xz$-plane.\label{projection}}

\end{figure}

The first point of intersection of $\ell^{Y}_{k}$ with $\partial I^{2}$, called $x_0$, corresponds to $t=0$, that is 
$$x_0=(0,0)\sim (1,1) \sim (0,1)\sim (1,0),$$ where by $\sim$ we mean that the two points differ in an integer vector.  Since the slope of  $\ell^Y_k = 1/\lambda^c_k < 1$, we get the next point of intersection of $\ell^{Y}_{k}$ with $\partial I^{2}$, called $z_0$, at  $t=1$, that is 
$$z_0=\left(1,\frac1{\lambda^c_k}\right)\sim \left(0,\frac1{\lambda^c_k}\right).$$ The third point of intersection of $\ell^{Y}_{k} \mod \ZZ^{2}$ with $\partial I^{2}$, called  $x_1$, corresponds to $t=\frac1{\lambda^c_k}$, that is, 
$$x_1=(\lambda^c_k-1, 1)\sim (\lambda^c_k-1, 0).$$
At $t=2$ we obtain the forth point of intersection:
$$z_1=\left(1,\frac1{\lambda^c_k}(1-(\lambda^c_k-1))\right)\sim \left(0,\frac1{\lambda^c_k}(1-(\lambda^c_k-1))\right).$$ 
In this way we get two finite sequences of points:
$$x_n=(\,n(\lambda^c_k-1),0), \mbox{ and }$$ 
$$z_n=\left(1,\frac1{\lambda^c_k}(1-n(\lambda^c_k-1))\right),$$
with $0\leq n\leq n_k=2\tak -1$. 
Then all $x_n$'s are  $\lambda^c_k-1$ apart. 
Since for $k$ large enough the slope of $\ell^Y_k$ is very close to 1,  the distance between any two  segments both above or below the diagonal of $I^2$ is greater than $2d_k = (\lambda^c_k-1)/2$. Finally, the distance between the two segments closest to the diagonal is larger than the distance to a parallel line through $(1,1)$. Again for $k$ large enough this is close to $\frac{\sqrt 2}2(1-\frac1{\lambda^c_k})>2d_k$.

Since the distance of $x_{n_k}$ to $(1,0)\sim (1,1)$ is smaller than $2(\lambda^c_k-1)$, it is also easy to see that $\ell^Y_k \mod \ZZ^{2}$ is  $3(\lambda^c_k-1)$-dense in $I^2$.


Now we consider the three coordinates of the segment $\ell_k \mod \mathbb{Z}^{3}$, and represent it as a subset of the three dimensional cube $I^3=[0,1]^3$. 
We have
$$\ell_k \mod \mathbb{Z}^{3} \subset \{(x,y,z)\in I^3: (x,z)\in \ell^Y_k\} := S.$$
Then $S$ consists of rectangles, each of which projects to a connected component of $\ell^Y_k \mod \ZZ^{2}$ under $P_Y$.  
The distance between any two of these rectangles is at least $2d_k$, according to the previous argument. Furthermore, $S$ is $3(\lambda^c_k-1)$-dense in $I^3$.

Below we estimate the distance between two components of $\ell_k\mod \ZZ^{3}$ located in the same connected component of $S$. Again, considering the invariance of the distance under translations, it is enough to focus on  the rectangle that contains the origin: 
$$S_0= \left\{\left(t, s, \frac{t}{\lambda^c_k}\right): \,s,t\in [0,1]\right\}.$$

In order to estimate the distance between two components of $\ell_k\mod \ZZ^{3}$ inside $S_0$, we have to compute at what points $\ell_k\mod \ZZ^{3}$  hits the right side of $S_0$, which is the side contained in the plane $y=1$ and is identified with the left side of $S_0$, which is the side contained in the plane $y=0$. \newline\par
Let
$$L:=\{(t,1,t/\lambda^c_k):t\in[0,1]\}\sim \{(t,0,t/\lambda^c_k):t\in[0,1]\}.$$
In Figure \ref{s_0}, we draw $\ell_k\mod \ZZ^{3}$ inside the rectangle $S_0$. The vertical axis in the figure is the line $L$. The horizontal axis in the figure is the $y$-axis. 


\begin{figure}[h]
\tikzset{every picture/.style={line width=0.75pt}} 

\begin{tikzpicture}[x=0.75pt,y=0.75pt,yscale=-1,xscale=1]\label{s_0}

\draw    (200,350) -- (538,349.01) ;
\draw [shift={(540,349)}, rotate = 539.8299999999999] [color={rgb, 255:red, 0; green, 0; blue, 0 }  ][line width=0.75]    (10.93,-3.29) .. controls (6.95,-1.4) and (3.31,-0.3) .. (0,0) .. controls (3.31,0.3) and (6.95,1.4) .. (10.93,3.29)   ;

\draw    (199,89) -- (200,350) ;

\draw    (199,89) -- (459,90) ;

\draw    (459,90) -- (460,349) ;

\draw    (460,290) -- (200,350) ;

\draw  [dash pattern={on 4.5pt off 4.5pt}]  (200,291) -- (460,290) ;

\draw    (460,231) -- (200,291) ;

\draw  [dash pattern={on 4.5pt off 4.5pt}]  (460,231) -- (200,230) ;

\draw    (459,170) -- (200,230) ;

\draw  [dash pattern={on 4.5pt off 4.5pt}]  (200,171) -- (459,170) ;

\draw    (200,171) -- (460,112) ;

\draw  [dash pattern={on 4.5pt off 4.5pt}]  (199,111) -- (460,112) ;

\draw    (199,111) -- (282,90) ;

\draw (460,364) node   {$(0,1,0)$};
\draw (198,364) node   {$(0,0,0)$};
\draw (480,292) node   {$w_1$};
\draw (480,232) node   {$w_2$};
\draw (480,172) node   {$w_3$};
\draw (480,113) node   {$w_4$};
\draw (270, 343)  node {$\beta_k$};
\draw (165, 80) node {$\left(1,0,\frac{1}{\lambda^c_k}\right)$};
\draw (330, 70) node {$y_0 = \left(1,\frac{1}{\lambda^c_k-1},\frac{1}{\lambda^c_k}\right)$};

\end{tikzpicture}
\caption{$\ell_k\mod \ZZ^{3}$ inside $S_0$\label{s_0}}
\end{figure}

Recall that $v^c_k=(1, 1/(\lambda^c_k-1), 1/{\lambda^c_k})$. Then $\ell_k\mod \ZZ^{3}$ first hits $L$ when $t_1=\lambda^c_k-1$. The corresponding point is 
$$
w_1 = \left(\lambda^c_k-1,1, \frac{\lambda^c_k-1}{\lambda^c_k}\right).
$$
Similarly, we have a sequence of points of intersection between $\ell_k\mod \ZZ^3$ and $L$
$$
w_n = \left(n(\lambda^c_k-1),1,n\frac{\lambda^c_k-1}{\lambda^c_k}\right),\qquad n=1,\ldots, \left[\frac{1}{\lambda^c_k-1}\right],
$$
given by $t_n = n(\lambda^c_k-1)$.
After that, $\ell_k\mod \ZZ^{3}$ hits the top side of $S_0$ at $y_0 = (1,1/(\lambda^c_k-1),1/\lambda^c_k)$ which is given by $t=1$, and then it jumps to another connected component of $S$.

The distance between two consecutive $w_n$ is the same for all $n$, and it is the same as
\begin{equation}\label{e.dz}
d(w_1,w_2)= \sqrt{(\lambda^c_k-1)^2+ \left(\frac{\lambda^c_k-1}{\lambda^c_k}\right)^2} = (\lambda^c_k-1)\sqrt{1+(\lambda^c_k)^{-2}}.
\end{equation}
Denote by 
$$\beta_k = \arctan \left(d(w_1,w_2)\right)
$$
the angle between components of  $\ell_k\mod \ZZ^{3}$ and the top/bottom side of $S_0$. By~\eqref{e.dz} we have $\beta_k\to 0$ as $k\to\infty$. 
Call $m_{k}$ the minimum distance between two components of $\ell_k\mod \ZZ^{3}$ inside $S_0$.  
Then for sufficiently large $k$,
$$m_{k}= \sin \beta_{k}\ge \frac12 \tan \beta_k= \frac12(\lambda^c_k-1)\sqrt{1+(\lambda^c_k)^{-2}}> \frac12(\lambda^c_k-1) = 2d_k.$$
With this we conclude the proof that $d(\ell_k,\ell_k+\mathbf n)>2d_k.$ 

Note that from~\eqref{e.dz}, we have $d(w_1,w_2) < 2(\lambda^c_k-1)$. It follows that $\ell_k\mod \ZZ^{3}$ is $2(\lambda^c_k-1)$-dense in $S_0$, and by translation invariance, in all connected components of $S$. Since $S$ itself is $3(\lambda^c_k-1)$-dense in $I^3$ by the first part of the proof, it follows that $\ell_k\mod\ZZ^{3}$ is $5(\lambda^c_k-1)$-dense in $I^3$. This shows that $\lim_H \ell_k = I^3$, concluding the proof of Lemma~\ref{l.separate}.

\end{proof}

Now the proof of Proposition~\ref{proposition} is complete, and Theorem~\ref{main.example} follows.

\appendix

\section{Proof of Lemma~\ref{l.dominated}}
We recall several well-known
facts about partially hyperbolic diffeomorphisms. First, we provide an alternate  definition of partial hyperbolicity as oppose to \eqref{pointwise.ph}.
The equivalence of these two definitions, changing the metric if necessary, was shown by Gourmelon \cite{G}. We say a diffeomorphism $f\in \Diff(\TT^3)$ admits a 
\emph{dominated splitting}  $T\TT^3=E^s\oplus E^c\oplus E^u$ if there are $C>0$ and  $\lambda_2<\lambda_3\leq\lambda_4<\lambda_5$
such that for any $x\in \TT^3$ and any $n>0$:
 \begin{equation}\label{eq.definitionofabphd}
\begin{split}
&\|Tf^n\mid_{E^s(x)}\|\leq C e^{\lambda_2n},\\
C^{-1
} e^{\lambda_3n}\leq & \|Tf^n\mid_{E^c(x)}\|\leq C e^{\lambda_4n},\\
C e^{\lambda_5n}\leq &\|Tf^n\mid_{E^u(x)}\|.
\end{split}
\end{equation}
Furthermore, $f$ is \emph{partially hyperbolic} if $\lambda_2<0$ and $\lambda_5>0$.

According to our hypotheses, we have two invariant two-dimensional subbundles $E$ and $G$ which are transverse to each other
at any point. We are also assuming there are closed cones $\cC^E\subset E$ and $\cC^G\subset G$ that are both transverse to the one-dimensional bundle $F=E\cap G$.
We want to establish that $f$ admits a dominated splitting $T\TT^3=E^\prime\oplus F \oplus G^\prime$, where $E^\prime=\cap_{n\ge 0} Tf^{-n}(\cC^E)$
and $G^\prime=\cap_{n\ge 0} Tf^n(\cC^G)$.  We will use the equivalent definition for a dominated splitting stated in ~\eqref{eq.definitionofabphd}.

Moreover, we want to prove that $f$ is partially hyperbolic if for any $x\in \TT^3$ we have
\begin{equation*}
\begin{split}
&|\det Tf\mid_{E(x)}|<|\det Tf\mid_{F(x)}|=\|Tf\mid_{F(x)}\|\\ &\text{  and  } \\ &|\det Tf\mid_{G(x)}|>|\det Tf\mid_{F(x)}|=\|Tf\mid_{F(x)}\|.
\end{split}
\end{equation*}

By  \cite[Theorem B]{BG} (see also \cite[Theorem 2.6]{CP}) we see that $Tf\mid_G$ has a dominated splitting $F\oplus G'$. Therefore, there exists $C>0$, $\lambda_4<\lambda_5$ with
\begin{equation}\label{e.a2}
\|Tf^n\mid _{F}\|\le Ce^{\lambda_4n}, \mbox{ and } Ce^{\lambda_5n}\le \|Tf^n\mid _{G'}\|.
\end{equation}
Similarly, applying the same argument on $E$, we obtain that $Tf|_{E}$ admits a dominated splitting $E'\oplus F$, that is, for some $C'>0$ and $\lambda_2<\lambda_3$,
\begin{equation}\label{e.a3}
\|Tf^n\mid _{E'}\|\le C'e^{\lambda_2n}, \mbox{ and } C'e^{\lambda_3n}\le \|Tf^n\mid _{F}\|.
\end{equation}
Combining~\eqref{e.a2} and~\eqref{e.a3} and noting that $\lambda_3\le\lambda_4$ otherwise $F$ would be empty,  we conclude that $E'\oplus F\oplus G'$ is a dominated splitting.

It remains then to show that $T\TT^3=E^\prime\oplus F \oplus G^\prime$ is a partially hyperbolic splitting.
We need to prove that it is possible to take $\lambda_2<0$ and $\lambda_5>0$. We will only show the first
 one; the other case is similar.
 
Suppose by contradiction that we cannot take $\lambda_2<0$ in~\eqref{e.a3}. This implies that
there exists $x\in \TT^3$ such that for any $n\geq 0$, $\log\|T f^n\mid_{E^\prime(x)}\|\geq 0$. Otherwise,  for each point $y$ there is an iterate $n_y>0$ for which $\log \|Tf^{n_y}|_{E^\prime(y)}\|<0$.  The continuity of $\log \|Tf^{n}|_{E^\prime(.)}\|$ for each $n\in\NN$, and compactness of $\TT^3$, implies that we can choose the $n_y$ such that they are bounded. This implies that the bundle $E^\prime$ is hyperbolic (contracting), contradicting our assumption. 

Let $x$ be the point obtained above. Take  $\mu$  to be 
any  weak$^*$ limit  of the sequence of probability measures $\{\frac{1}{n}\sum_{i=0}^{n-1}\delta_{f^i(x)}\}$. Note that $\mu$ is $f$-invariant.
The function  $\log \|Tf|_{E^\prime(.)}\|$ is continuous, so
\begin{equation}\label{eq.nonhyperbolic}
\int \log ||Tf\mid_{E^\prime(x)}||\,d\mu(x)\geq 0.
\end{equation}

   Oseledec's Theorem implies that  the splitting $E^\prime\oplus F\oplus G^\prime$
coincides with  Oseledec's splitting for the measure $\mu$. 
Therefore, at  $\mu$-almost every point $y$, there are well defined Lyapunov exponents  
 $\kappa_1(y)<\kappa_2(y)<\kappa_3(y)$ corresponding to the bundles $E',F$ and $G'$, respectively. 
 These exponents satisfy 
\[\begin{split} 
 K_1&=\int \kappa_1(y) \,d\mu(y)=\int \log ||Tf\mid_{E^\prime(y)}||\,d\mu(y), \\
  K_2&=\int \kappa_2(y) \,d\mu(y)=\int \log ||Tf\mid_{F(y)}||\,d\mu(y), \mbox{ and}\\
  K_1+K_2&= \int \log |\det Tf\mid_{E^\prime(y)\oplus F(y)}|\,d\mu(y)=\int \log |\det Tf\mid_{E(y)}|\,d\mu(y).
 \end{split}
\]
 
By \eqref{eq.partialhyperbolic}, $K_1+K_2<K_2$, which implies $K_1<0$.
But by \eqref{eq.nonhyperbolic}, $K_1\geq 0$, a contradiction.

\section*{Acknowledgments} The authors thank Fan Yang for many helpful suggestions, especially regarding the example in Section 7. We also thank  the anonymous referees for their careful reading and helpful comments, which significantly improved the presentation of the current paper.


\begin{thebibliography}{10}
\bibitem{ABC}
F. Abdenur, C. Bonatti and S. Crovisier.
\newblock Nonuniform hyperbolicity for $C^1$-generic diffeomorphisms
\newblock {\em Israel J. of Math.}. 183: 1--60, 2011.

\bibitem{ABV}
J. F.~Alves, C.~Bonatti, M.~Viana,
\newblock SRB measures for partially hyperbolic systems whose central direction is mostly expanding,
\newblock {\em Invent. Math}. 140: 351--398, 2000.

\bibitem{AV}
M. Andersson and C. H. V\'asquez.
\newblock On mostly expanding diffeomorphisms.
\newblock {\em Ergodic Theory Dynam. Systems} 38: 2838-2859, 2018.


\bibitem{BG}
J. Bochi and N. Gourmelon.
\newblock Some characterizations of domination.
\newblock {\em Mathematische Zeitschrift} 263: 221--231, 2009.

\bibitem{BD} 
C. Bonatti, and L. J. D\'iaz. 
\newblock Persistent nonhyperbolic transitive diffeomorphisms.
\newblock {\em Ann. of Math. } 143: 357--396, 1996.

\bibitem{BDP} 
C. Bonatti, L. J. D\'iaz, and E. R. Pujals.
\newblock A $C^1$-generic dichotomy for diffeomorphisms: weak forms of hyperbolicity or infinitely many sinks or sources. 
\newblock {\em Ann. of Math.} 158: 355--418, 2003. 

\bibitem{BDU}
C. Bonatti, L. J. D\'iaz and R. Ures.
\newblock Minimality of strong stable and unstable foliations for partially hyperbolic diffeomorphisms.
\newblock {\em J. Inst. Math. Jussieu} 1: 513--541, 2002.


\bib{BDVnonuni}
C. Bonatti, L. J. D{\'\i}az, and M. Viana.
\newblock Dynamics Beyond Uniform Hyperbolicity,
\newblock {\em Encyclopaedia Math. Sci.} vol. 102, Springer--Verlag, 2005.

\bibitem{BGHP}
C. Bonatti, A. Gogolev, A. Hammerlindl, and R. Potrie.
\newblock Anomalous partially hyperbolic systems III: abundance and incoherence.
\newblock to appear in {\em Geometry \& Topology}.

\bibitem{BP}
M. Brin and Ya. Pesin.
\newblock Partially hyperbolic dynamical systems.
\newblock{\em Izv. Acad. Nauk. SSSR,} 1: 177--212, 1974.

\bibitem{BV00}
C. Bonatti, M. Viana.
\newblock SRB measures for partially hyperbolic systems whose central direction is mostly contracting,
\newblock {\em Israel J. Math.} 115: 157-193, 2000.

\bibitem{BFT} J.~Buzzi, T.~Fisher, A.~Tahzibi.
 A dichotomy for measures of maximal entropy near time-one maps of transitive Anosov flows, 	arXiv:1904.07821, 2019.

%



\bibitem{Cog}
K. Cogswell.
\newblock Entropy and volume growth.
\newblock {\em Ergodic Theory Dynam. Systems}, 20: 77-84, 2000.

\bibitem{CP}
S. Crovisier and R. Potrie, 
\newblock Introduction to partially hyperbolic dynamics, 
\newblock Lecture notes for a minicourse in School and Conference in Dynamical Systems (ICTP, Trieste, July 2015).

%

\bibitem{DPU} 
L. J. D\'iaz, E. R. Pujals, and R. Ures.
\newblock Partial hyperbolicity and robust transitivity.
\newblock {\em Acta Math.} 183: 1--43, 1999.


\bibitem{D2}
D. Dolgopyat.
\newblock Lectures on $u$-Gibbs states. 
\newblock Available at {\em www.math.psu.edu/dolgop/papers.htlm.}


\bib{D}
D. Dolgopyat.
\newblock Limit theorems for partially hyperbolic systems.
\newblock {\em Trans. Amer. Math. Soc.}, 356: 1637--1689, 2004.


\bibitem{Fra70}
J.~Franks.
\newblock Anosov diffeomorphisms.
\newblock In {\em Global {A}nalysis ({P}roc. {S}ympos. {P}ure {M}ath., {V}ol.
  {XIV}, {B}erkeley, {C}alif., 1968)}, pp. 61--93. Amer. Math. Soc., 1970.
  
\bibitem{GS}  
S. Gan, and Y. Shi.
\newblock Rigidity of center Lyapunov exponents and $su$-integrability. 
\newblock {\em arXiv:1905.07896}, 2019.

\bibitem{GKM}
A. Gogolev, A. Kolmogorov, and I. Maimon.
\newblock A numerical study of Gibbs u-measures for partially hyperbolic diffeomorphisms on $\mathbb T^3$.
\newblock {\em Experimental Mathematics,} 28: 271--283, 2019.



\bibitem{G}
N. Gourmelon.
\newblock Adapted metrics for dominated splitting.
\newblock {\em Ergodic Theory Dynam. Systems} 27: 1839--1849, 2007.


\bibitem{Ham13}
A.~Hammerlindl.
\newblock Leaf conjugacies on the torus.
\newblock {\em Ergodic Theory Dynam. Systems}, 33: 896--933, 2013.


\bibitem{HU}
A.~Hammerlindl and R.~Ures.
\newblock Ergodicity and partial hyperbolicity on the 3-torus.
\newblock {\em Commun. Contemp. Math.}, 16: 1350038, 22 pages, 2014.

\bibitem{HHUsome}
F.~Rodriguez Hertz, M.~A.~Rodriguez Hertz, and R.~Ures.
\newblock Some results on the integrability of the center bundle for partially hyperbolic diffeomorphisms.
\newblock {\em Partially hyperbolic dynamics, laminations, and Teichm\"uller flow}, pp. 103--109,
{\em Fields Inst. Commun.}, 51, Amer. Math. Soc., Providence, RI, 2007.

\bibitem{HHUcoh}
F.~Rodriguez Hertz, M.~A.~Rodriguez Hertz, and R.~Ures.
\newblock A non-dynamically coherent example on $\mathbb{T}^3$.
\newblock {\em Annales Inst. Henri Poincar\'{e}-Analyse Non-Lin\'{e}aire}, 33: 1023--1032, 2016.

\bibitem{HU2019}
J. Rodriguez Hertz, R.Ures. 
\newblock On the three-legged accessibility property,
\newblock {\em  ``New trends in One-dimensional dynamics''},  Springer Proc. Math. Stat., 285, 239-248,  2019.

\bibitem{HPS70}
M.~Hirsch, C.~Pugh, and M.~Shub.
\newblock Invariant manifolds.
\newblock {\em Bull. Amer. Math. Soc.}, 76: 1015--1019, 1970.

\bibitem{HPS77}
M.~Hirsch, C.~Pugh, and M.~Shub.
\newblock {\em Invariant manifolds}, volume 583 of {\em Lect. Notes in Math.}
\newblock Springer Verlag, 1977.

\bib{HYY}
Y. Hua, F. Yang and J. Yang.
\newblock New criterion of physical measures for partially hyperbolic diffeomorphisms.
\newblock {\em Trans. Amer. Math. Soc. } 373: 385--417, 2020.



\bib{L84}
F. Ledrappier.
\newblock Propri{\'e}t{\'e}s ergodiques des mesures de {S}ina{\"\i},
\newblock {\em Publ. Math. I.H.E.S.}, 59: 163--188, 1984.


\bib{LS82}
F. Ledrappier and J. M. Strelcyn.
\newblock A proof of the estimation from below in Pesin's entropy formula.
\newblock {\em Ergodic Theory Dynam. Systems} 2: 203--219, 1982.


\bibitem{LW}
F. Ledrappier and P. Walters.
\newblock A relativised variational principle for continuous transformations.
\newblock {\em J. London Math. Soc.}, 16: 568--576, 1977.


\bib{LY85a}
F. Ledrappier and L.-S. Young.
\newblock The metric entropy of diffeomorphisms. {I}. {C}haracterization of measures satisfying {P}esin's entropy formula,
\newblock {\em Ann. of Math.} 122: 509--539, 1985.


\bib{LY85b}
F. Ledrappier and L.-S. Young.
\newblock The metric entropy of diffeomorphisms. {I}{I}. {R}elations
              between entropy, exponents and dimension,
\newblock {\em Ann. of Math.} 122: 540--574, 1985.


\bibitem{M}
R. Ma\~{n}\'{e}.
\newblock Contributions to the stability conjecture.
\newblock {\em Topology}, 17: 383--396, 1978.


\bib{PS82}
Ya. Pesin, Ya. Sinai,
\newblock Gibbs measures for partially hyperbolic attractors,
\newblock {\em Ergodic Theory Dynam. Systems} 2: 417--438, 1982.


\bibitem{PT}
G. Ponce and A. Tahzibi.
\newblock Central Lyapunov exponent of partially hyperbolic diffeomorphisms of $\TT^3$.
\newblock {\em Proc. Amer. Math. Soc.}, 142: 3193--3205, 2014.

\bibitem{Pot15}
R.~Potrie.
\newblock Partial hyperbolicity and foliations in {$\TT^3$}.
\newblock {\em J. Mod. Dyn.}, 9: 81--121, 2015.


\bibitem{PS}
E. Pujals and M. Sambarino.
\newblock A sufficient condition for robustly minimal foliations.
\newblock {\em Ergodic Theory Dynam. Systems}, 26: 281--289, 2006.


\bib{Rok49}
V. A. Rokhlin.
\newblock On the fundamental ideas of measure theory,
\newblock {\em Amer. Math. Soc. Transl.} 10 (1952) 1--52; Translation from {\em Mat. Sbornik} 25 (1949) 107--150.

\bibitem{Rok67}
V. A. Rokhlin. 
\newblock Lectures on the entropy theory of measure-preserving transformations. 
\newblock {\em Russ.
Math. Surveys,} 22 -5: 1--52, 1967.  Transl. from {\em Uspekhi Mat. Nauk.} 22 - 5 (1967), 3--56.

  
 \bibitem{Sh}
 M. Shub.
 \newblock  Topologically transitive diffeomorphims of $\mathbb T^4$.
\newblock in {Symposium on Differential Equations and Dynamical Systems} (University of Warwick, 1968/1969), pp. 39--40. Lecture notes in Math., 206. Springer-Verlag, Berlin-New York, 1971.


\bibitem{Ure12}
R.~Ures.
\newblock Intrinsic ergodicity of partially hyperbolic diffeomorphisms with a
  hyperbolic linear part.
\newblock {\em Proc. Amer. Math. Soc.}, 140: 1973--1985, 2012.


\bibitem{VY2}
M.~Viana and J.~Yang.
\newblock Measure-theoretical properties of center foliations.
\newblock {Modern Theory of Dynamical Systems: A Tribute to Dmitry Victorovich Anosov, Contemporary Mathematics} Volume 692, pp. 291--320,  2017.

\bibitem{WWZ}
X. Wang, L. Wang and Y. Zhu.
\newblock Formula of entropy along unstable foliations for $C^1$ diffeomorphisms with dominated splitting.
\newblock {\em Discrete Contin. Dyn. Syst.} 38: 2125--2140, 2018.


\bibitem{Y0}
J. Yang.
\newblock Entropy along expanding foliations.
\newblock {\it arXiv:1601.05504v2}, 2018.

\bibitem{Y1}
J. Yang.
\newblock Geometrical and measure-theoretic structures of maps with mostly expanding center.
\newblock {\it arXiv:1904.10880v2,} 2019.


\end{thebibliography}

\end{document}